\theoremstyle{plain}
\newtheorem{main}{Theorem}
\newtheorem{thm}{Theorem}[section]
\newtheorem*{thm*}{Theorem}
\newtheorem{lem}[thm]{Lemma}
\newtheorem{fact}[thm]{Fact}
\newtheorem{prop}[thm]{Proposition}
\newtheorem*{prop*}{Proposition}
\newtheorem{cor}[thm]{Corollary}
\newtheorem*{cor*}{Corollary}
\theoremstyle{definition}
\newtheorem{defn}[thm]{Definition}
\newtheorem*{defn*}{Definition}
\newtheorem{remark}[thm]{Remark}
\newtheorem{remarks}[thm]{Remarks}
\newtheorem{question}[thm]{Question}
\newtheorem*{question*}{Question}
\newtheorem*{Pquestion*}{Popa's question}
\newtheorem*{conv*}{Convention}
\newcommand{\dminus}{ 
\buildrel\textstyle\ .\over{\hbox{ 
\vrule height3pt depth0pt width0pt}{\smash-} 
}}
\newcommand{\norm}[1]{{\left\lVert #1\right\rVert}}
\def\bb{\mathbb}
\def\bb{\mathbb}
\def\cal{\mathcal}
\def\u{\mathsf 1}
\def\dotminussym#1#2{%
  \setbox0=\hbox{$\m@th#1-$}%
  \kern.5\wd0%
  \hbox to 0pt{\hss\hbox{$\m@th#1-$}\hss}%
  \raise.6\ht0\hbox to 0pt{\hss$\m@th#1.$\hss}%
  \kern.5\wd0}
\DeclareMathOperator{\id}{id}
\DeclareMathOperator{\tr}{tr}
\newcommand\bC{{\mathbb C}}
\def \R{\mathcal R}
\def \u{\mathcal U}
\begin{document}


\title{Uniformly Super McDuff II$_1$ Factors}
\author[Goldbring, Jekel, Kunnawalkam Elayavalli, and Pi]{Isaac Goldbring, David Jekel, Srivatsav Kunnawalkam Elayavalli, and Jennifer Pi}

\thanks{I. Goldbring was partially supported by NSF grant DMS-2504477.  D. Jekel was partially supported by NSF grant DMS-2002826. S. Kunnawalkam Elayavalli was supported by a Simons Postdoctoral Fellowship.}

\address{Department of Mathematics\\University of California, Irvine, 340 Rowland Hall (Bldg.\# 400),
Irvine, CA 92697-3875}
\email{isaac@math.uci.edu}
\urladdr{http://www.math.uci.edu/~isaac}

\address{Department of Mathematics\\University of California, Irvine, 410 Rowland Hall (Bldg.\# 400),
Irvine, CA 92697-3875}
\email{jspi@math.uci.edu}
\urladdr{https://sites.uci.edu/jpi314/}

\address{Department of Mathematics\\University of California, San Diego, 9500 Gilman Dr,
La Jolla, CA 92093}
\email{djekel@ucsd.edu}
\urladdr{https://davidjekel.com/}

\address{Institute of Pure and Applied Mathematics, UCLA, 460 Portola Plaza, Los Angeles, CA 90095, USA}\email{srivatsav.kunnawalkam.elayavalli@vanderbilt.edu}
\urladdr{https://sites.google.com/view/srivatsavke/home}

\begin{abstract}

We introduce and study the family of uniformly super McDuff II$_1$ factors. This family is shown to be closed under elementary equivalence  and  also coincides with the family of II$_1$ factors with the Brown property introduced in \cite{AGKE-GeneralizedJung}.  We  show that a certain family of existentially closed factors, the so-called infinitely generic factors, are uniformly super McDuff, thereby  improving  a recent result of \cite{ChifanDI-EmbUnivPptyT}.  We also show that Popa's family of strongly McDuff II$_1$ factors are uniformly super McDuff.  Lastly, we investigate when finitely generic II$_1$ factors are uniformly super McDuff.         

\end{abstract}
\maketitle

\section{Introduction}

The study of central sequences in II$_1$ factors has been a central theme in operator algebras since the days of Murray and von Neumann, who introduced in \cite{MvN43} the notion of Property Gamma which states that nontrivial central sequences exist and which can be used to distinguish the hyperfinite II$_1$ factor from the free group factor $L(\mathbb{F}_2)$.  McDuff \cite{McD1} leveraged the property of having abundantly many central sequences  to construct continuum many non-isomorphic separable II$_1$ factors, settling a fundamental question open since the inception of the subject.  In \cite{McD2}, McDuff showed that the central sequence algebra being noncommutative is equivalent to tensorially absorbing the hyperfinite II$_1$ factor; ever since then, this strengthening of property Gamma has been called the McDuff property and has been of great conceptual importance in the subject.


The model theory of II$_1$ factors, initiated in  \cite{FarahHartSherman-MTOA1, FarahHartSherman-MTOA2, FarahHartSherman-MTOA3}, has since been an active area of research in operator algebra theory (for a recent survey see \cite{surveyMTOA} and the references therein). Among the various insights that model theory offers is in the analysis of ultrapowers; since the central sequences can be viewed as simply the commutant of the diagonal embedding of a II$_1$ factor $M$ into its ultrapower $M^{\mathcal{U}}$, model theory is especially suited for studying central sequence algebras. (Throughout this introduction, all ultrafilters are assumed to be nonprincipal ultrafilters on $\bb N$). One of the main avenues of research in the model theory of II$_1$ factors is to determine when two II$_1$ factors $M$ and $N$ elementarily equivalent, denoted $M\equiv N$, which, assuming the continuum hypothesis, is equivalent to asking if there exixts an ultrafilter $\mathcal{U}$ such that the ultrapowers $M^\u$ and $N^\u$ are isomorphic.  (In general, in this paper we assume the continuum hypothesis in order to be able to state such algebraic characterizations of model-theoretic properties.) The classification of II$_1$ factors up to elementary equivalence turns out to be much more challenging than the classification up to isomorphism (for instance, there are continuum many non-isomorphic separable II$_1$ factors that are elementarily equivalent to a given II$_1$ factor \cite[Theorem 4.3]{FarahHartSherman-MTOA3}).   The central sequences algebra has again played a crucial role  in the elementary equivalence problem.  Indeed, the first handful of examples of non-elementarily equivalent II$_1$ factors (see \cite{FarahHartSherman-MTOA3, FGL, Dixmier1969Deux, ZM, GoldbringHart-McDuffTheories}) and the first family of continuum many non-elementarily equivalent II$_1$ factors (interestingly coinciding with McDuff's factors; see \cite{BCI15}) pivotally use the structure of the central sequence algebra as the invariant (see also \cite{CIKE}, which exhibits two non-elementarily equivalent II$_1$ factors without central sequences). Despite these results, it remains  a  challenging  problem to further the classification  up to elementary equivalence of II$_1$  factors with central sequences. 
 
In this paper, we study this problem by identifying a proper subfamily of McDuff II$_1$ factors that are stable under elementary equivalence which we call \emph{uniformly super McDuff} factors.  The motivation for the terminology arises from \cite{GoldbringHart-McDuffTheories}, which introduced the super McDuff property, which is the property of admitting a II$_1$ factorial central sequence algebra and which has been of recent interest in the model theory of von Neumann algebras (see for instance \cite{IS, Marrakchi, AGKE-GeneralizedJung, GoldbringHart-FewQfsForThR,   ChifanDI-EmbUnivPptyT}). To see the various examples and relationships  between the super McDuff property and other subclasses of property Gamma factors, we direct the reader to  \cite[Section 6.4]{AGKE-GeneralizedJung}.   This paper introduces a uniform version of a finitary reformulation of the aforementioned super McDuff property, which we call the \textbf{uniform super McDuff property}.

\begin{defn*}
The McDuff II$_1$ factor $M$ is \textbf{uniformly super McDuff} if for all $n\geq 1$, there is $m\geq 1$ such that:  for all $F\subseteq (M)_1$ with $|F|\leq n$, there is $G\subseteq (M)_1$ with $|G|\leq m$ for which, given any $(p,q)\in \cal P_2(M)$ with $$\max_{x\in G}\max(\|[x,p]\|_2,\|[x,q]\|_2)<\frac{1}{m},$$ there is $u\in U(M)$ with $\|upu^*-q\|_2<\frac{1}{n}$ and $\max_{x\in F}\|[x,u]\|_2<\frac{1}{n}$.
\end{defn*}

As a first remark, we recall a notion first introduced in \cite{AGKE-GeneralizedJung} and studied there in connection with proving a generalized version of a key result of Jung \cite{JungTubularity} (see also \cite{ScottSri2019ultraproduct}).

\begin{defn*}
A separable II$_1$ factor $M$ has the \textbf{Brown property} if for every separable subfactor $N\subseteq M^\u$, there is a separable subfactor $N\subseteq P\subseteq M^\u$ with $P'\cap M^\u$ a II$_1$ factor.
\end{defn*}

Recalling that we are assuming the continuum hypothesis, any two nonprincipal ultrapowers of $M$ are isomorphic, implying that the definition of the Brown property is independent of the choice of ultrafilter.  Similarly, one sees that if $M$ has the Brown property and $N$ is elementarily equivalent to $M$, then $N$ also has the Brown property. Our first  main result shows that the uniform  super McDuff property is closed under elementary equivalence and in fact coincides with the Brown property:    

\begin{main} \label{intromain1}
For a II$_1$ factor $M$, the following are equivalent:
\begin{enumerate}
    \item $M$ is uniformly super McDuff.
    \item Every $N\equiv M$ is uniformly super McDuff.
    \item Every $N\equiv M$ is super McDuff.  
    \item $M^\u$ is super McDuff for every nonprincipal ultrafilter $\u$ on $\bb N$.
    \item $M^\u$ is super McDuff for some nonprincipal ultrafilter $\u$ on $\bb N$.
    \item Every separable $N\equiv M$ is super McDuff.
    \item $M$ has the Brown property.
\end{enumerate}
\end{main}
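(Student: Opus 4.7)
The plan is to establish the seven equivalences by proving the cycle $(1) \Rightarrow (2) \Rightarrow (3) \Rightarrow (4) \Rightarrow (5) \Rightarrow (1)$, the bridges $(3) \Rightarrow (6)$ and $(6) \Rightarrow (1)$, and separately $(1) \Leftrightarrow (7)$. The direction $(1) \Rightarrow (2)$ is syntactic: if $m = m(n)$ is the bound witnessing uniform super McDuff in $M$, then for each $n$ the required condition is captured by a single continuous first-order sentence $\phi_{n, m(n)}$ of the form $\sup_{F} \inf_{G} \sup_{(p,q)} \inf_{u} \psi$, quantifying over tuples of the prescribed size in $(M)_1$, pairs in $\mathcal{P}_2(M)$, and $u \in U(M)$, with $\psi$ penalizing violation of the required implication; this sentence transfers to any $N \equiv M$ with the same bound. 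The step $(2) \Rightarrow (3)$ is an ultralimit argument: trace-$1/2$ projections $p, q \in M' \cap M^\u$ automatically satisfy the commutation hypothesis for every finite $G \subseteq M$, so applying uniform super McDuff along an exhausting sequence of finite subsets of $(M)_1$ yields unitaries $u_n \in U(M)$ asymptotically commuting with $M$ and with $u_n p u_n^* \to q$; their ultralimit lies in $M' \cap M^\u$, and a halving trick upgrades unitary conjugacy of trace-$1/2$ central projections to conjugacy of all central projections of equal trace, so $M' \cap M^\u$ is a factor. The implications $(3) \Rightarrow (4) \Rightarrow (5)$ and $(3) \Rightarrow (6)$ are routine since $M^\u \equiv M$ for every nonprincipal $\u$ on $\bb N$.

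The central obstacle is $(5) \Rightarrow (1)$ (equivalently $(6) \Rightarrow (1)$), which must upgrade a qualitative factoriality statement to uniform quantitative bounds in $M$. I would argue by contrapositive: if $M$ is not uniformly super McDuff at some $n$, then for each $m$ there exist witnesses $F_m \subseteq (M)_1$ of size $\le n$ and, for each $G \subseteq (M)_1$ of size $\le m$, a bad pair $(p_G, q_G) \in \mathcal{P}_2(M)$ approximately commuting with $G$ within $1/m$ that no unitary in $U(M)$ within $1/n$ of commuting with $F_m$ conjugates to within $1/n$. A compactness argument over $\Th(M)$, exploiting $\aleph_1$-saturation of $M^\u$ under CH, amalgamates these witnesses into a single realization in $M^\u$: namely, an $n$-tuple $a \in M^\u$ together with an ultralimit pair $(p, q) \in (M^\u)' \cap (M^\u)^{\mathcal{W}}$ of trace-$1/2$ projections in the relative commutant that cannot be conjugated there, violating super McDuff of $M^\u$. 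The analogous $(6) \Rightarrow (1)$ instead realizes the type in a separable $N \equiv M$ via downward L\"owenheim-Skolem, producing a failure of super McDuff of $N$.

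For $(1) \Leftrightarrow (7)$: in the direction $(1) \Rightarrow (7)$, given a separable $N \subseteq M^\u$, I would enumerate a countable dense subset $\{x_k\} \subseteq (N)_1$ and recursively adjoin, for each finite $F \subseteq \{x_k\}$ and each pair of trace-$1/2$ projections in the approximate relative commutant of the corresponding $G$, the unitaries produced by the uniform super McDuff bound $m = m(n)$. The resulting separable subfactor $P \supseteq N$ will satisfy that any two trace-$1/2$ projections in $P' \cap M^\u$ are unitarily conjugate there, so $P' \cap M^\u$ is a factor. For $(7) \Rightarrow (1)$, I would deduce $(6)$ from Brown: every separable $N \equiv M$ sits inside $M^\u$ via elementarity and saturation under CH, and applying Brown to (the image of) $N$ in $M^\u$ yields a separable $P \supseteq N$ with $P' \cap M^\u$ a factor; the isomorphism $N^\u \cong M^\u$ under CH then transfers factoriality back to $N' \cap N^\u$, so $N$ is super McDuff, and the already-established chain $(6) \Rightarrow (1)$ closes the loop.
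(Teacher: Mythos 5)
Your overall architecture (a syntactic transfer for $(1)\Rightarrow(2)$, trivial downward implications, an ultraproduct contrapositive for the upgrade, and a separate treatment of the Brown property) matches the paper's, but the crux $(5)\Rightarrow(1)$ is where your sketch has a genuine gap. For nonseparable factors such as $M^{\mathcal U}$, the paper defines ``super McDuff'' by the finitary criterion of Proposition \ref{finitary} (see the remark following it), \emph{not} by factoriality of $(M^{\mathcal U})'\cap (M^{\mathcal U})^{\mathcal W}$; your contrapositive aims at producing a pair of projections in $(M^{\mathcal U})'\cap(M^{\mathcal U})^{\mathcal W}$ that cannot be conjugated there, which is a different (and not obviously equivalent) target. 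More seriously, the construction of that pair is unjustified: the witnesses to non-uniformity only defeat unitaries that approximately commute with certain \emph{finite} subsets of $M$, whereas your pair must commute with all of $M^{\mathcal U}$ and must defeat every unitary commuting with all of $M^{\mathcal U}$; realizing such a configuration is a type over the full parameter set $M^{\mathcal U}$, whose density character is $\aleph_1$, and $\aleph_1$-saturation (even under CH) only realizes types over strictly smaller, i.e.\ separable, parameter sets. The paper's argument for $(4)\Rightarrow(1)$ runs the implication in the opposite direction and never leaves $M^{\mathcal U}$: assuming the finitary super McDuff criterion for $M^{\mathcal U}$, apply it to the ultralimit tuple $x_j=(x_j^m)_{\mathcal U}$ of the bad tuples, lift the finite witnessing set $G=\{y_1,\dots,y_t\}\subseteq M^{\mathcal U}$ to representing sequences, feed the slices $\{y_1^m,\dots,y_t^m\}$ back into the failure of uniformity at stage $m$ to obtain bad pairs $(p_m,q_m)$, note that $p=(p_m)_{\mathcal U}$, $q=(q_m)_{\mathcal U}$ commute exactly with the $y_j$, and then decompose the resulting unitary $u=(u_m)_{\mathcal U}$ to contradict badness at $\mathcal U$-almost every $m$. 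No second ultrapower, no nonseparable relative commutant, and no CH is needed. (Relatedly, your exhaustion argument for $(2)\Rightarrow(3)$ only makes sense for separable factors; under the paper's finitary definition this implication is immediate, and adopting that definition is what makes $(3)$--$(5)$ meaningful for nonseparable $N$ and $M^{\mathcal U}$ in the first place.)

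Your treatment of $(1)\Leftrightarrow(7)$ also has problems. In $(7)\Rightarrow(1)$ the claimed ``transfer back'' is false as stated: knowing $P'\cap M^{\mathcal U}$ is a factor for \emph{some} larger separable $P\supseteq N$ gives no information about $N'\cap N^{\mathcal U}$ (relative commutants of smaller algebras are larger, and factoriality passes in neither direction), and an isomorphism $N^{\mathcal U}\cong M^{\mathcal U}$ need not carry the diagonal copy of $N$ to your embedded copy; the paper instead quotes \cite[Proposition 6.2.4]{AGKE-GeneralizedJung} for $(6)\Leftrightarrow(7)$, whose proof is genuinely more involved than this. In $(1)\Rightarrow(7)$, adjoining ``the unitaries produced by the uniform super McDuff bound'' for each pair of projections is both uncountable (destroying separability of $P$) and beside the point, since the conjugating unitaries are supposed to lie in $P'\cap M^{\mathcal U}$, not in $P$; the correct closure adjoins the finite witnessing sets $G$ for each finite $F$ and each $n$, and then produces the exact conjugating unitary in $P'\cap M^{\mathcal U}$ by countable saturation over the separable parameter set $P\cup\{p,q\}$. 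Finally, restricting throughout to trace-$\frac12$ projections is unnecessary, and recovering factoriality from conjugacy of trace-$\frac12$ projections requires a central-trace/diffuseness argument that your ``halving trick'' does not supply; the definitions already quantify over all pairs in $\mathcal P_2$, so nothing is gained by the restriction.
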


We then proceed to show that two particular families of super McDuff II$_1$ factors are in fact uniformly super McDuff (and hence also have the Brown property):

\begin{main} \label{intromain2}
Any II$_1$ factor that is elementarily equivalent to any of following separable II$_1$ factors is uniformly super McDuff:
\begin{enumerate}
    \item An \textbf{infinitely generic} II$_1$ factor.
    \item $N {\otimes} \R$ where $N$ is a separable full (that is, non-Gamma) II$_1$ factor. 
\end{enumerate}
\end{main}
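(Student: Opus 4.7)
The plan is to invoke Theorem~\ref{intromain1} to reduce the uniform super McDuff conclusion to more tractable equivalents. Since (1)$\Leftrightarrow$(2) in that theorem, the elementary-equivalence qualifier in the statement is free, and it suffices to verify uniform super McDuff for the specified separable witness in each case.

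For Part~(2), let $M = N \otimes \R$ with $N$ separable and full. This $M$ is a prototype in Popa's class of strongly McDuff II$_1$ factors, so the conclusion follows from the paper's separately established result (promised in the abstract) that every strongly McDuff factor is uniformly super McDuff. A direct verification of the Brown property (equivalent (7) of Theorem~\ref{intromain1}) is also possible: given a separable subfactor $P \subseteq M^\u$, take $Q := P \vee (N \otimes 1)$. Fullness of $N$ yields $(N \otimes 1)' \cap M^\u = (1 \otimes \R)^\u$ by the standard Connes-type characterization of fullness via approximate commutation, so $Q' \cap M^\u = P' \cap (1 \otimes \R)^\u$. By enlarging $Q$ further to absorb suitable copies of $\R$ commuting with $P$, one obtains a separable $Q$ with $Q' \cap M^\u$ a II$_1$ factor.

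For Part~(1), let $M$ be an infinitely generic II$_1$ factor. The Chifan--Drimbe--Ioana theorem~\cite{ChifanDI-EmbUnivPptyT} gives that $M$ is super McDuff, and we upgrade to uniformly super McDuff. The key tool is Part~(2) applied to the extension $\widetilde M := M \otimes \R$, which is uniformly super McDuff and hence has the Brown property. Using the defining existential closure property of infinitely generic $M$ in $M \hookrightarrow \widetilde M$, we transfer Brown-type witnesses back: each finitary instance of the uniform super McDuff definition is an existential statement, satisfied in $\widetilde M$, hence realized in $M$ by existential closure. Countable saturation of $M^\u$ then compiles these finitary witnesses into the required separable $P$ in $M^\u$ with factorial relative commutant.

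The main obstacle is the transfer step in Part~(1). Although each local instance of the uniform super McDuff definition is an $\exists G$-statement with a fixed modulus, one must preserve the \emph{uniform} dependence $m(n)$ across all finite inputs; the finitary formulation in the definition is precisely designed to make this transfer possible, and the proof will either proceed by compactness/saturation or by expressing the uniform dependence as a single first-order schema in the theory of infinitely generic factors.
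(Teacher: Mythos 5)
Your plan for Part (2) does not stand on its own. Citing ``the paper's separately established result (promised in the abstract) that every strongly McDuff factor is uniformly super McDuff'' is circular: that result \emph{is} Part (2) of the statement you are proving. Your fallback ``direct verification'' of the Brown property stalls exactly at the hard point. The identification $(N\otimes 1)'\cap M^\u=(1\otimes\R)^\u$ is fine (it is the spectral gap/fullness argument, essentially Lemma \ref{lem:tensorproduct} applied coordinatewise), but the final step ``enlarging $Q$ further to absorb suitable copies of $\R$ commuting with $P$'' is precisely what needs proof: absorbing commuting copies of $\R$ makes the relative commutant large (type II$_1$) but gives no control over its \emph{center}, and this is exactly the gap between McDuff and super McDuff. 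For instance, taking $Q=P\vee(N\otimes 1)\vee(1\otimes\R)$ yields $Q'\cap M^\u=P'\cap(\R'\cap\R^\u)$, which need not be a factor, and since $Q\not\subseteq(1\otimes\R)^\u$ you cannot simply invoke Brown's theorem for $\R$ either. The paper's actual argument (Theorem \ref{thm:RUSM}) supplies the missing ingredient quantitatively: Hastings' quantum expanders give $\sup_k\operatorname{SG}(M_k(\bC))<\infty$, tensor-product lemmas control $\operatorname{SG}(M\otimes M_k(\bC)\otimes\bC, M\otimes\R)$, the relative commutant $\bC\otimes\bC\otimes\tilde\R$ is visibly a factor, and the FC limit spectral gap criterion (Proposition \ref{sgapcriterion}) then yields uniform super McDuffness.

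Part (1) is broken at the outset. You apply Part (2) to $\widetilde M=M\otimes\R$, but Part (2) requires the first tensor factor to be full, and an infinitely generic $M$ is e.c., hence McDuff, hence has property Gamma and is not full; moreover $M\otimes\R\cong M$ for McDuff $M$, so the maneuver is circular even formally. The transfer step also rests on a false premise: the finitary instances of the uniform super McDuff condition are not existential --- with the finite set $F\subseteq M$ as parameters they have the shape $\inf_{\vec y}\sup_{(p,q)\in\mathcal P_2}\inf_{u\in U}$ (the full sentences are $\forall_4$, as the paper notes), so existential closedness of $M$ in an extension does not pull them down, and you acknowledge this ``main obstacle'' without resolving it. The paper's route is different: using the Chifan--Drimbe--Ioana construction with a \emph{fixed} property (T) group, it builds one infinitely generic factor as the union of a chain alternating infinitely generic factors and property (T) factors with uniformly bounded Kazhdan numbers (Proposition \ref{prop:infgenericprop(T)}); since property (T) subfactors of e.c.\ factors have factorial relative commutant, this factor has the FC limit spectral gap property and is uniformly super McDuff (Proposition \ref{uniformpropT}); finally, all infinitely generic factors are elementarily equivalent and uniform super McDuffness is invariant under elementary equivalence by Theorem \ref{intromain1} --- this last reduction is the only point where your proposal and the paper genuinely coincide.
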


The first family is a certain subclass of the family of existentially closed factors (see \cite{ecfactors}) arising in the model theory of II$_1$ factors. Our result for this case improves upon the work of \cite{ChifanDI-EmbUnivPptyT}, where it was shown that   infinitely generic II$_1$ factors are super McDuff. Note that our result also obtains uniform super McDuffness for factors elementarily equivalent to  infinitely generic factors, some of which are not infinitely generic themselves (see Lemma \ref{infgenfact}).

The second family of factors in the above list are usually referred to as \textbf{strongly McDuff} factors and were first considered by Popa in \cite{spgap}.  Strongly McDuff II$_1$ factors were shown to be super McDuff in \cite[Proposition 6.1.5]{AGKE-GeneralizedJung}.  

Theorems \ref{intromain1} and \ref{intromain2}  produce several new concrete examples of factors with the Brown property.  Indeed, before this work, the only known examples of II$_1$ factors with the Brown property were those factors elementarily equivalent to $\R$ \cite{AGKE-GeneralizedJung, GoldbringHart-FewQfsForThR}, which do not fall into either of the two categories in the above theorem.


Having proven that the infinitely generic factors have the uniform super McDuff property, it is natural to inquire whether another important subclass of the existentially closed factors, namely the finitely generic factors, have this property.  Here, a third property, which we call the (T)-factorial relative commutant property, or (T)-FRC property, becomes relevant.  We say that a separable II$_1$ factor $M$ has the \textbf{(T)-FRC property} if and only if every separable II$_1$ factor with property (T) embeds into $M^\u$ with factorial relative commutant.  We show below that this property is an axiomatizable property of separable II$_1$ factors.  In \cite{Goldbring-PopaFCEP}, it was shown that the infinitely generic factors have the (T)-FRC property.  Here, we connect the question of whether or not finitely generic factors have the (T)-FRC property and whether or not they are uniformly super McDuff: 

\begin{main}
If the finitely generic factors have the (T)-FRC property, then they are also uniformly super McDuff.
\end{main}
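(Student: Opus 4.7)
The plan is to combine Theorem~\ref{intromain1} with the axiomatizability of the class of finitely generic factors to reduce to showing super McDuffness for finitely generic factors with (T)-FRC, and then to use existential closure together with (T)-FRC to locate a property (T) subfactor of $M' \cap M^\u$ whose relative commutant in $M^\u$ is factorial.

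First I would carry out a model-theoretic reduction. The class of finitely generic II$_1$ factors is axiomatizable, being the class of models of the theory $T_f$ arising from Robinson's finite-forcing construction applied to the theory of II$_1$ factors; and (T)-FRC is axiomatizable as the paper observes. Consequently, under the hypothesis of the theorem, every II$_1$ factor elementarily equivalent to a finitely generic factor is itself finitely generic and hence has (T)-FRC. By Theorem~\ref{intromain1}, specifically $(1) \Leftrightarrow (3)$, it thus suffices to show that every finitely generic $M$ with (T)-FRC is super McDuff, i.e., $M' \cap M^\u$ is a factor.

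Now fix such $M$ and a separable property (T) II$_1$ factor $P$ (for instance, $P = L(\PSL_3(\bb Z))$). The core construction is an embedding $\iota : M \bar\otimes P \hookrightarrow M^\u$ extending the diagonal inclusion $M \hookrightarrow M^\u$ for which $\iota(P)' \cap M^\u$ is a factor. The existence of some such extension $M \bar\otimes P \hookrightarrow M^\u$ over $M$ is immediate from the existential closure of $M$ (a consequence of finite genericity) together with the obvious inclusion $M \hookrightarrow M \bar\otimes P$, $x \mapsto x \otimes 1$; upgrading the construction so that in addition $\iota(P)' \cap M^\u$ is factorial should combine the (T)-FRC hypothesis with $\aleph_1$-saturation of $M^\u$ (granted via CH) to realize the appropriate type. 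Granted this embedding, $Q := \iota(P)' \cap M^\u$ is a II$_1$ factor containing $M$, and any $z \in Z(M' \cap M^\u)$ commutes with $\iota(P) \subseteq M' \cap M^\u$, so lies in $M' \cap Q$.

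Finally, to force $z \in \mathbb{C}$, my plan is to iterate the construction over a suitably rich family of separable property (T) factors $\{P_\alpha\}$, obtaining for each an embedding $\iota_\alpha : M \bar\otimes P_\alpha \hookrightarrow M^\u$ over $M$ with $\iota_\alpha(P_\alpha)' \cap M^\u$ factorial; $z$ must then lie in the intersection $\bigcap_\alpha \iota_\alpha(P_\alpha)' \cap M^\u \cap M'$, and a genericity argument, leveraging the finite-forcing characterization of $T_f$ beyond mere existential closure, would force this intersection to be trivial. I expect the main obstacle to be the compatibility step in the previous paragraph: on its face (T)-FRC only asserts that \emph{some} embedding of $P$ into $M^\u$ has factorial relative commutant, while the argument requires the specific embedding produced by e.c.\ of $M \bar\otimes P$ over $M$ to have this property. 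Bridging this gap — presumably through property (T) rigidity results (so that factoriality of the relative commutant is an invariant of the unitary-conjugacy class of the embedding) combined with a forcing/enforceability argument particular to finitely generic structures — is where the heart of the proof should lie, and also where alternative routes (such as a direct verification of the Brown property via $(7) \Rightarrow (1)$ of Theorem~\ref{intromain1}, enlarging a given separable $N \subseteq M^\u$ to $N \vee \iota(P)$) would need to be weighed.
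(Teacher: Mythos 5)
There are two genuine gaps here, and they sit exactly at the load-bearing points of your outline. First, the model-theoretic reduction fails: the class of finitely generic factors is \emph{not} closed under elementary equivalence, and your claim that it is axiomatizable (as ``the models of $T_f$'') is unjustified and false as stated --- finitely generic factors are separable by construction, while any ultrapower $M^\u\equiv M$ is nonseparable and hence not finitely generic; more generally nothing guarantees that models of the finite forcing companion are finitely generic (compare Lemma \ref{infgenfact} for the infinitely generic case). The (T)-FRC property is indeed axiomatizable (Proposition \ref{popaaxiom}), but finite genericity is not, so you cannot pass from ``finitely generic with (T)-FRC $\Rightarrow$ super McDuff'' to condition (3) of Theorem \ref{intromain1}: the factors $N\equiv M$ you must handle need not be finitely generic, and your argument gives you no purchase on them. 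Second, the step you yourself flag as the ``heart'' is genuinely missing and is not a technicality: (T)-FRC only produces \emph{some} embedding of a property (T) factor $P$ into $M^\u$ with factorial relative commutant, with no control on its position relative to the diagonal copy of $M$, whereas your construction needs an embedding of $M\bar\otimes P$ over $M$ (so that $\iota(P)\subseteq M'\cap M^\u$) retaining factorial relative commutant. Neither $\aleph_1$-saturation nor any known property (T) rigidity makes factoriality of the relative commutant an invariant of arbitrary repositionings of the embedding, and the concluding ``genericity forces the intersection to be trivial'' step is likewise unsubstantiated; moreover, even if completed, this would only give super McDuffness of $M$ itself, not the uniform version, which you were deriving from the flawed reduction.

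The paper's proof avoids both problems by a different configuration. Using embedding universality of property (T) factors (Chifan--Drimbe--Ioana), one chooses a property (T) factor $N$ \emph{containing} the finitely generic factor $P$ (no commutation with $P$ is needed), and (T)-FRC supplies an embedding $N\hookrightarrow P^\u$ with factorial relative commutant, with no compatibility demanded. The lack of compatibility is then repaired by the generalized Jung property of finitely generic factors: the composite $P\subseteq N\hookrightarrow P^\u$ is automatically elementary, so the union $Q$ of the iterated chain $P\subseteq N\hookrightarrow P^\u\subseteq N^\u\hookrightarrow P^{2\u}\subseteq\cdots$ satisfies $Q\equiv P$. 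An induction along the chain, driven by Lemma \ref{propTlemma} and the factoriality of $N'\cap P^\u$, shows $Q$ is uniformly super McDuff, with uniformity coming for free from the \emph{fixed} Kazhdan set of $N$; Theorem \ref{intromain1} then transfers uniform super McDuffness from $Q$ back to $P$. If you want to salvage your approach, you would need either to realize a property (T) factor inside $M'\cap M^\u$ with factorial relative commutant in $M^\u$ (which (T)-FRC does not give) or to adopt the paper's ``containment plus generalized Jung'' scheme.
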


 We end this introduction by stating the main open question in connection to our work: 

 \textbf{Question D.}
Is the class of II$_1$ factors with the super McDuff property closed under elementary equivalence?  Equivalently, do the classes of super McDuff and uniformly super McDuff factors coincide?

\subsection*{Organization of the paper}

In Section 2, we recall some background on the model theory of tracial von Neumann algebras, including a discussion of the classes of existentially closed, infinitely generic, and finitely generic factors. For a more detailed background on related matters, we direct the reader to \cite[Section 2]{AGKE-GeneralizedJung}. Section 3 is where the  uniform super McDuff property is introduced and Theorem \ref{intromain1} is proved. Section 4 proves Theorem \ref{intromain2} via a unified approach involving a certain spectral gap criterion. In Section 5, Theorem C is proved and various remarks surrounding the problem of whether finitely generic II$_1$ factors are uniformly super McDuff are documented.

\subsection*{Acknowledgements} It is our pleasure to thank Adrian Ioana for suggesting to us the idea of using Hastings' result for proving Theorem \ref{intromain2} in Section \ref{subsec:stronglyMcDuff}.
\section{Preliminaries}

\subsection{Existentially closed, infinitely generic, and finitely generic factors}\label{subsection-e.c.}

\begin{defn}
A separable tracial von Neumann algebra $M$ is said to be \textbf{existentially closed (e.c.)} if: for any tracial von Neumann algebra $N$ containing $M$, there is an embedding $N\hookrightarrow M^\u$ that restricts to the diagonal embedding $M\hookrightarrow M^\u$.
\end{defn}

The definition given above is the semantic definition and uses the separability assumption on $M$.  The syntactic definition, which makes no separability assumption, reads:  $M$ is e.c. if and only if, for every tracial von Neumann algebra $N$ extending $M$ and every quantifier-free formula $\varphi(x)$ with parameters from $M$, we have 
$$\inf\{\varphi(a)^M \ : \ a\in M\}=\inf\{\varphi(b)^N \ : \ b\in N\}.$$

E.c. tracial von Neumann algebras are McDuff II$_1$ factors (see, for example, \cite[Section 2]{GoldbringHartSinclair-NoModelCompanion}); for this reason, we often refer to e.c.\ tracial von Neumann algebras as e.c.\ II$_1$ factors.  Moreover, the class of e.c. tracial von Neumann algebras is \textbf{embedding universal}, meaning that every separable II$_1$ factor is contained in a separable e.c. II$_1$ factor.  (This is a special case of a general model-theoretic fact; see \cite[Lemma 3.5.7]{CK2013ModelTheory} for a proof in classical logic.)

Before moving on, we make a short digression to point out some facts about e.c. factors that follow from the results of \cite{AGKE-GeneralizedJung} but were not explicitly pointed out there.  In \cite[Question 6.3.1]{AGKE-GeneralizedJung}, it was asked whether or not every e.c. factor is super McDuff.  The following yields a criterion for when an e.c. factor is super McDuff:

\begin{thm}\label{ecsupercriterion}
For an e.c. factor $M$, the following are equivalent:
\begin{enumerate}
    \item $M$ is super McDuff.
    \item For any e.c. factor $N$ with $M\subseteq N$, we have that $M'\cap N^\u$ is a factor.
    \item There is an e.c. factor $N$ with $M\subseteq N$ for which $M'\cap N^\u$ is a factor.
\end{enumerate}
\end{thm}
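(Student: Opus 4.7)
The plan is to prove the cycle $(1) \Rightarrow (2) \Rightarrow (3) \Rightarrow (1)$, with $(2) \Rightarrow (3)$ being immediate upon taking $N = M$ (admissible since $M$ is itself e.c.\ by hypothesis). Both nontrivial directions will proceed by a common template. Given equal-trace projections $p, q$ in the relative commutant of $M$ inside some ultrapower, introduce the partial type $\Sigma(y)$ in a single variable that asserts ``$y$ is a unitary, $ypy^* = q$, and $[y, m_i] = 0$ for every $i$,'' where $\{m_i\}$ is a fixed $\|\cdot\|_2$-dense countable subset of $M$. Since $\Sigma$ has only countably many parameters and CH gives $\aleph_1$-saturation of every ultrapower of a separable II$_1$ factor, any consistent instance of $\Sigma$ will be realized wherever needed.

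For $(3) \Rightarrow (1)$, let $N \supseteq M$ be e.c.\ with $M' \cap N^\u$ a factor, and pick equal-trace projections $p, q \in M' \cap M^\u$. The canonical trace-preserving inclusion $M^\u \subseteq N^\u$ induced by $M \subseteq N$ places $p$ and $q$ inside $M' \cap N^\u$, which is a II$_1$ factor by hypothesis; hence there is a unitary $u \in M' \cap N^\u$ with $upu^* = q$. This realizes $\Sigma$ in $N^\u$, viewed as an extension of $M^\u$, so $\Sigma$ is consistent. By $\aleph_1$-saturation of $M^\u$, the type is realized in $M^\u$, producing a unitary in $M' \cap M^\u$ conjugating $p$ to $q$. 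Since $M$ is e.c.\ (hence McDuff), this shows that $M' \cap M^\u$ is a II$_1$ factor, i.e., $M$ is super McDuff.

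For $(1) \Rightarrow (2)$, let $N \supseteq M$ be e.c.\ and let $p, q \in M' \cap N^\u$ have equal trace. By $\aleph_1$-saturation of $N^\u$, it suffices to realize $\Sigma$ in some tracial von Neumann algebra extending $N^\u$. Such an extension will be supplied by an embedding $\iota : N^\u \hookrightarrow M^\u$ over $M$, built by transfinite induction of length $\aleph_1$: at each countable stage, the existential type over $M$ (together with previously embedded parameters) of the next element of $N^\u$ is consistent with the theory of $M^\u$, since the e.c.\ property of $M$ forces $\inf_{N^\u} \varphi = \inf_M \varphi = \inf_{M^\u} \varphi$ for every quantifier-free formula $\varphi$ with parameters from $M$, and then $\aleph_1$-saturation of $M^\u$ realizes this type. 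Once $\iota$ is in hand, $\iota(p)$ and $\iota(q)$ are equal-trace projections in $M' \cap M^\u$, which is a II$_1$ factor by super McDuffness of $M$; a conjugating unitary there witnesses $\Sigma$ inside the extension $M^\u \supseteq N^\u$, establishing consistency and finishing the argument.

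The main delicate point is the construction of the over-$M$ embedding $\iota : N^\u \hookrightarrow M^\u$. The transfinite induction is standard, but it genuinely uses both inputs: CH (for density character $\aleph_1$ on both sides and $\aleph_1$-saturation of $M^\u$) and the e.c.\ property of $M$ (for the matching of quantifier-free $\inf$-values over $M$ between $N^\u$ and $M^\u$). With $\iota$ available, the remaining arguments are just repackagings of the definition of super McDuffness and of factoriality in terms of conjugability of equal-trace projections.
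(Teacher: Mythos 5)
Your cycle $(1)\Rightarrow(2)\Rightarrow(3)\Rightarrow(1)$ matches the paper's, and $(2)\Rightarrow(3)$ via $N=M$ is fine, but both of your nontrivial directions rest on a principle that is false as stated: that $\aleph_1$-saturation of an ultrapower lets you realize a type as soon as it is realized in \emph{some} tracial von Neumann algebra extending that ultrapower. Saturation realizes types that are finitely approximately satisfiable \emph{in the ultrapower itself}; realization in an arbitrary (non-elementary) extension gives no such thing. For a concrete failure, let $P$ be a separable non-Gamma II$_1$ factor and let $\Sigma(y)$ say, over a countable dense subset of the diagonal copy of $P$ in $P^{\mathcal U}$, that $y$ is a trace-$\tfrac12$ projection commuting with all the parameters: $\Sigma$ is realized in the extension $P^{\mathcal U}\,\overline{\otimes}\,L^\infty[0,1]$ but not in $P^{\mathcal U}$, since $P'\cap P^{\mathcal U}=\mathbb C$. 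To descend from an extension you must know the ultrapower is existentially closed in that particular extension. In your $(3)\Rightarrow(1)$ this is repairable: since $p,q\in M^{\mathcal U}$ lift to sequences from $M$ and the $m_i$ lie in $M$, applying e.c.-ness of $M$ inside $N$ coordinatewise (Łoś) shows that each finite approximate instance of $\Sigma$ witnessed in $N^{\mathcal U}$ is witnessed in $M^{\mathcal U}$ — but you never make this argument and never invoke e.c.-ness of $M$ in that direction, even though it is precisely where the hypothesis enters.

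The gap in $(1)\Rightarrow(2)$ is more serious. After producing $\iota:N^{\mathcal U}\hookrightarrow M^{\mathcal U}$ over $M$ and a unitary $u\in M'\cap M^{\mathcal U}$ with $u\iota(p)u^*=\iota(q)$, you must return to a unitary in $M'\cap N^{\mathcal U}$ conjugating $p$ (not $\iota(p)$) to $q$. Your appeal to saturation of $N^{\mathcal U}$ would require $\iota(N^{\mathcal U})$ to be existentially closed in $M^{\mathcal U}$ (equivalently, the self-embedding of $N^{\mathcal U}$ obtained by composing with $M^{\mathcal U}\subseteq N^{\mathcal U}$ to be existential); this does not follow from $N$ being e.c., cannot be obtained coordinatewise since $\iota$ has no ultraproduct structure, and amounts to the nontrivial question of whether ultrapowers of e.c. factors are existentially closed in such extensions — which you neither prove nor cite. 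Two further remarks: your transfinite construction of $\iota$ is justified only for quantifier-free formulas with parameters from $M$, whereas later stages have parameters outside $M$; the standard repair is to carry the stronger inductive hypothesis that the partial maps preserve existential formulas. And for this theorem you never need all of $N^{\mathcal U}$: embedding the separable algebra generated by $M\cup\{p,q\}$ over $M$ is exactly what the definition of e.c. provides, with no CH or induction. The paper sidesteps all of this by running the chain $M\subseteq M^{\mathcal U}\subseteq N^{\mathcal U}$ through the factorial-relative-commutant lemmas of \cite{AGKE-GeneralizedJung}; that transfer machinery is the content your argument is missing.
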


\begin{proof}
For the implication (1) implies (2), suppose that $M$ is super McDuff and $N$ is an e.c. factor with $M\subseteq N$.  By considering the chain $M\subseteq M^\u\subseteq N^\u$, we see that $M'\cap N^\u$ is a factor using \cite[Theorem 7.15(2), Lemma 7.2.1, and Lemma 7.2.8]{AGKE-GeneralizedJung}.

The implication (2) implies (3) is trivial.  For the implication (3) implies (1), suppose that $N$ is e.c., $M\subseteq N$, and $M'\cap N^\u$ is a factor.  We then have that $M\subseteq M^\u$ is a factor using \cite[Theorem 7.1.5(1) and Lemma 7.2.1]{AGKE-GeneralizedJung}.
\end{proof}

In \cite[Question 6.3.10]{AGKE-GeneralizedJung}, it was asked if a union of a chain of super McDuff factors is once again super McDuff.  Using Theorem \ref{ecsupercriterion}, we can show that the answer to this question is positive if all of the super McDuff factors in the chain are also e.c. (which is a strenthening of \cite[Proposition 6.3.17]{AGKE-GeneralizedJung}): 

\begin{cor}
Any union of a chain of e.c. super McDuff factors is (e.c.) super McDuff.
\end{cor}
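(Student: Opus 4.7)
The plan is to verify the two claims separately: existential closedness of $M = \bigcup_i M_i$, and then the super McDuff property.

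For the first, I would use the standard fact that unions of chains of e.c.\ structures in an inductive class (such as the tracial von Neumann algebras) are themselves e.c. Concretely, given any tracial extension $N \supseteq M$ and a quantifier-free formula $\varphi(x, \bar a)$ with parameters $\bar a \in M_i$ for some $i$, the chain
\[
\inf_{x \in M} \varphi(x, \bar a)^{M} \;\le\; \inf_{x \in M_i} \varphi(x, \bar a)^{M_i} \;=\; \inf_{x \in N} \varphi(x, \bar a)^{N} \;\le\; \inf_{x \in M} \varphi(x, \bar a)^{N}
\]
forces equality throughout; the middle equality uses the e.c.\ property of $M_i$ in the extension $M_i \subseteq N$, while the outer inequalities use the inclusions $M_i \subseteq M \subseteq N$ together with the quantifier-freeness of $\varphi$.

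Having $M$ e.c., by Theorem~\ref{ecsupercriterion} the super McDuff property for $M$ reduces to producing a single e.c.\ factor $N \supseteq M$ for which $M' \cap N^\u$ is a factor. I would take $N := M$ itself (now known to be e.c.). Theorem~\ref{ecsupercriterion}(1) $\Rightarrow$ (2), applied to each pair $(M_i, M)$ with $M_i$ being e.c.\ super McDuff and $M$ an e.c.\ extension, yields that $P_i := M_i' \cap M^\u$ is a II$_1$ factor for every $i$. These form a descending chain $P_1 \supseteq P_2 \supseteq \cdots$ of subfactors of $M^\u$ whose intersection is $M' \cap M^\u$.

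The main obstacle is then to show that this decreasing intersection of factors is itself a factor, since such intersections are not automatically factorial. The natural strategy is to take $z \in Z(M' \cap M^\u)$ and apply Dixmier's averaging property inside each factor $P_i$ to produce convex combinations
\[
v^{(i)} \;=\; \sum_j \lambda_j^{(i)}\, u_j^{(i)}\, z\, (u_j^{(i)})^{*}
\]
with $u_j^{(i)} \in U(P_i)$ and $\| v^{(i)} - \tau(z) \cdot 1 \|_2 < 1/i$. Since each $u_j^{(i)}$ commutes with $M_i$ and $z$ commutes with all of $M$, every conjugate $u_j^{(i)} z (u_j^{(i)})^*$---and hence $v^{(i)}$---remains commuting with $M_i$. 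The crux is to combine this with the centrality of $z$ inside $M' \cap M^\u$, for instance via a spectral-gap or saturation argument exploiting the e.c.-ness of $M$, in order to conclude that the $v^{(i)}$ also $\|\cdot\|_2$-approximate $z$; this would then force $z = \tau(z) \cdot 1 \in \bC$ and complete the proof.
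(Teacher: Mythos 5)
Your reduction follows the paper's intended route: the union $M$ is e.c.\ by the standard chain argument (your displayed chain of inequalities is correct, since the parameters lie in some $M_i$ and quantifier-free formulas have the same values in subalgebras), and Theorem \ref{ecsupercriterion}, (1)$\Rightarrow$(2), applied to each pair $(M_i,M)$ gives that each $P_i=M_i'\cap M^\u$ is a factor, with $M'\cap M^\u=\bigcap_i P_i$. However, the step you yourself flag as ``the crux'' is exactly the content that still has to be proved, and the Dixmier-averaging sketch does not close it. The unitaries $u_j^{(i)}\in U(P_i)$ need not commute with $z$: they commute with $M_i$, while $z$ is only known to be central in the smaller algebra $M'\cap M^\u$. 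Hence the conjugates $u_j^{(i)}z(u_j^{(i)})^*$ are not small perturbations of $z$, and the convex combinations $v^{(i)}$ converge to the scalar $\tau(z)1$ by construction; nothing forces them to approximate $z$. If $z$ were a nontrivial central element, the $v^{(i)}$ would simply drift away from it, so no contradiction can be extracted along these lines without a genuinely new ingredient.

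The missing ingredient is precisely \cite[Proposition 6.3.12]{AGKE-GeneralizedJung}, which this paper invokes in the analogous situation in the remark following Proposition \ref{uniformpropT}: if $M$ is an increasing union of subfactors $M_i$ with each $M_i'\cap M^\u$ a factor, then $M'\cap M^\u$ is a factor. Alternatively, argue directly with the projection criterion for factoriality rather than with central elements: given $(p,q)\in\cal P_2(M'\cap M^\u)$, factoriality of $P_i$ yields $u_i\in U(P_i)$ with $u_ipu_i^*=q$; since every finite subset of $(M)_1$ is $\|\cdot\|_2$-approximately contained in some $M_i$, the countable collection of conditions asserting that $u$ is a unitary with $upu^*=q$ and $[u,x]=0$ for $x$ in a countable $\|\cdot\|_2$-dense subset of $(M)_1$ is finitely approximately satisfiable, so countable saturation of $M^\u$ produces $u\in U(M'\cap M^\u)$ conjugating $p$ to $q$; since e.c.\ factors are McDuff, $M'\cap M^\u$ is type II$_1$, and $M$ is super McDuff. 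With that replacement your argument is complete; the e.c.-ness of the union and the appeal to Theorem \ref{ecsupercriterion} are correct as written.
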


We now return to the main thread of this section.  In this paper, we will  be concerned with specific subclasses of the class of e.c. factors.  We first consider the class of \textbf{infinitely generic factors}, which is defined to be the largest class of II$_1$ factors that is embedding universal and which is \textbf{model-complete}, meaning any embedding $i:M\hookrightarrow N$ between two infinitely generic factors is elementary, that is, extends to an isomorphism $M^\u\to N^\u$.  (The existence of such a class follows from \cite[Corollary 5.15]{ecfactors}.  The terminology infinitely generic refers to the use of so-called infinite model-theoretic forcing in the construction of these factors; see \cite[Section 5]{ecfactors}.)  It follows from \cite[Lemma 5.20]{ecfactors} that any two infinitely generic II$_1$ factors are elementarily equivalent.  That infinitely generic factors are e.c. follows from \cite[Proposition 5.11]{ecfactors}.

In connection with Theorem \ref{thm:mainthm2} below, we need to record the following:

\begin{lem}\label{infgenfact}
There is a II$_1$ factor $M$ that is elementarily equivalent to the infinitely generic factors but is not itself infinitely generic.
\end{lem}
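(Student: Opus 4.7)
My plan is to argue by contradiction: assuming the lemma fails, I will derive the existence of a model companion for the theory of tracial von Neumann algebras, in contradiction with \cite{GoldbringHartSinclair-NoModelCompanion}. So suppose every II$_1$ factor elementarily equivalent to an infinitely generic one is itself infinitely generic. Since all infinitely generic factors are elementarily equivalent (as recalled just above the lemma), they share a common complete theory $T_{\text{ig}}$, and under the hypothesis the models of $T_{\text{ig}}$ are exactly the infinitely generic factors.

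The next step is to verify that $T_{\text{ig}}$ satisfies the two defining properties of a model companion for the theory $T_{\mathrm{tvN}}$ of tracial von Neumann algebras. First, $T_{\text{ig}}$ is model-complete: by the very definition of the class of infinitely generic factors, any embedding between two such factors is elementary, and under the contradiction hypothesis this equivalently says that any embedding between models of $T_{\text{ig}}$ is elementary. Second, $T_{\text{ig}}$ and $T_{\mathrm{tvN}}$ share the same universal theory: embedding universality of the infinitely generic class gives that every tracial vNa embeds into some model of $T_{\text{ig}}$, while conversely every model of $T_{\text{ig}}$ is itself a tracial vNa, so universal sentences transfer in both directions. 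Combining these, $T_{\text{ig}}$ would be a model companion of $T_{\mathrm{tvN}}$, yielding the required contradiction.

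The main potential obstacle I anticipate is purely formal: translating the classical model-companion framework into the continuous-logic setting used throughout the paper. One needs the continuous analog of a universal sentence (a sup-sentence) and must check that embedding universality and model-completeness as used in \cite{ecfactors} align with the corresponding continuous-logic notions of \cite{GoldbringHartSinclair-NoModelCompanion}, so that the non-existence of a model companion really rules out our hypothetical $T_{\text{ig}}$. These are standard sanity checks in the model theory of II$_1$ factors but have to be made explicit. Finally, I note that this contradiction argument produces $M$ without any a priori control on density character; since the lemma does not insist on separability, this is acceptable, though if a separable witness were wanted one would need an additional argument to rule out infinite genericity of a separable elementary substructure.
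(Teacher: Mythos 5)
Your proposal is correct and follows essentially the same route as the paper: assume the lemma fails, observe that the models of the common theory of the infinitely generic factors would then be precisely the infinitely generic factors, and use embedding universality together with model-completeness to conclude that this theory would be a model companion of the theory of tracial von Neumann algebras, contradicting \cite{GoldbringHartSinclair-NoModelCompanion}. The extra verifications you spell out (same universal theory, continuous-logic translation) are just the details the paper leaves implicit.
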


\begin{proof}
Let $T^*$ denote the common theory of the infinitely generic factors.  If the statement of the lemma is false, then the models of $T^*$ are precisely the infinitely generic factors themselves.  Since the collection of infinitely generic factors is embedding universal and model-complete, it follows that $T^*$ is the model companion of the theory of tracial von Neumann algebras, contradicting the main result of \cite{GoldbringHartSinclair-NoModelCompanion}.
\end{proof}

The other class of e.c. factors we will need to consider are the \textbf{finitely generic factors}.  Any definition of these factors involves a discussion of finite model-theoretic forcing (see, for example, \cite[Section 6]{ecfactors} or \cite[Section 3]{Goldbring-Enforceable}).  However, for our purposes, we will only need to know the following two properties of a finitely generic factor $N$ (besides their existence):

\begin{itemize}
    \item $N$ is \textbf{locally universal}, meaning that every separable II$_1$ factor admits an embedding into $N^\u$;
    \item $N$ has the \textbf{generalized Jung property}, meaning that every embedding $N\hookrightarrow N^\u$ is elementary.
\end{itemize}

The first item follows from \cite[Propositions 2.10 and 3.9]{Goldbring-Enforceable} and the fact that e.c. models of theories with the joint embedding property are locally universal (see \cite[Subsection 1.1]{Goldbring-Enforceable}).  The second item follows from \cite[Proposition 3.10]{Goldbring-Enforceable}.

Whether these two properties actually characterize being finitely generic is an open problem (see \cite[Question 3.3.8.]{AGKE-GeneralizedJung}).  

It follows immediately from these two properties that any finitely generic factor is e.c.  It is unknown whether or not the theories of finitely generic and infinitely generic II$_1$ factors coincide, or equivalently, whether or not any finitely generic factor must be infinitely generic.  (In general, there need not be any connection between the finitely generic and infinitely generic models of a given theory.)

\subsection{Definability}\label{subsection:definability}

We freely use the terminology around definability adopted in \cite{Goldbring-gap}.  Throughout this subsection, $T$ denotes the theory of tracial von Neumann algebras in the appropriate continuous signature.

For a tracial von Neumann algebra $M$, we let $\cal P_2(M)$ denote the set of pairs of projections from $M$ of the same trace.  The relevance of the set $\cal P_2(M)$ is that $M$ is a factor if and only if, for any $(p,q)\in \cal P_2(M)$, there is a unitary $u\in M$ with $upu^*=q$. 

The assignment $M\mapsto \cal P_2(M)$ is a $T$-functor which is the zeroset of the $T$-formula $$\xi(p,q):=\max(\|p^*-p\|_2,\|p^2-p\|_2,\|q^*-q\|_2,\|q^2-q\|_2,|\tau(p)-\tau(q)|).$$ Moreover, an easy functional calculus argument shows that $$\cal P_2\left(\prod_\u M_i \right)=\prod_\u \cal P_2(M_i)$$ for all families $(M_i)_{i\in I}$ of tracial von Neumann algebras and all ultrafilters $\u$ on $I$, whence $\cal P_2$ is a $T$-definable set by \cite[Theorem 2.13]{Goldbring-gap} .  

Consequently, for any $T$-formula $\varphi(p,q,\vec z)$, the $T$-functor $\sup_{(p,q)\in \cal P_2}\varphi(p,q,\vec z)$ is a $T$-formula.  It will behoove us to understand the relationship between the quantifier-complexity of $\sup_{(p,q)\in \cal P_2}\varphi(p,q,\vec z)$ and of $\varphi(p,q,\vec z)$ itself.  To understand this relationship, we first note that, as $T$-formulae, one has $$\sup_{(p,q)\in \cal P_2}\varphi(p,q,\vec z)=\sup_{\vec x}(\varphi(\vec x,\vec z)\dminus \alpha( d(\vec x,\cal P_2)),$$ where $\alpha:[0,1]\to [0,1]$ is an increasing continuous function with $\alpha(0)=0$ satisfying $$|\varphi(\vec x,\vec z)-\varphi(\vec y,\vec z)|\leq \alpha(d(\vec x,\vec y))$$ for all tuples $\vec x$, $\vec y$, $\vec z$ in models of $T$.  Since $\xi$ is quantifier-free, one can show that $d(\vec x,\cal P_2)$ is $T$-equivalent to an existential formula (see \cite[Section 2]{GoldbringHart-undecidability}, whence so is $\alpha(d(\vec x,\mathcal{P}_2))$ as $\alpha$ is increasing.  As a result, if $\varphi(p,q,\vec z)$ is $\forall_n$ (resp. $\exists_n$), then the $\sup_{(p,q)\in \cal P_2}\varphi(p,q,\vec z)$ is $\forall_n$ (resp. $\forall_{n+1}$).

We will also find the need to take the infimum over the $T$-definable set $U$ of unitaries, which is the zeroset of the $T$-formula $$\upsilon(u)=\max(\|u^*u-1\|_2,\|uu^*-1\|_2).$$  In this case, for any $T$-formula $\varphi(u,\vec z)$, there is a continuous function $\gamma$ such that, as $T$-formulas, we have $$\inf_{u\in U}\varphi(u,\vec z)=\inf_x(\varphi(x,\vec z)+\gamma(\upsilon(x))$$ (see the proof of \cite[Theorem 2.13]{Goldbring-gap}).  As a result, if $\varphi(u,\vec z)$ is $\forall_n$ (resp. $\exists_n$), then $\inf_{u\in U}\varphi(u,\vec z)$ is $\exists_{n+1}$ (resp. $\exists_{n}$).

\section{The uniform super McDuff property}

The following is a finitary (ultraproduct-free) reformulation of having factorial relative commutant:

\begin{prop}\label{finitary}
If $M$ is a separable II$_1$ factor, then $M'\cap M^\u$ is a factor if and only if:  for every $\epsilon>0$ and every finite $F\subseteq (M)_1$, there are $\delta>0$ and finite $G\subseteq (M)_1$ such that: for all $(p,q)\in \cal P_2(M)$, if $\|[x,p]\|_2,\|[x,q]\|_2<\delta$ for all $x\in G$, then there is $u\in U(M)$ such that $\|upu^*-q\|_2<\epsilon$ and $\|[x,u]\|_2<\epsilon$ for all $x\in F$.
\end{prop}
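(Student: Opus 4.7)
The plan is to unpack both sides of the equivalence via a diagonal ultraproduct construction, using as the main tool the characterization from Subsection~\ref{subsection:definability} that a tracial von Neumann algebra $N$ is a factor exactly when every $(p,q) \in \mathcal{P}_2(N)$ admits a unitary $u \in U(N)$ with $upu^* = q$, applied to $N = M' \cap M^\u$. Throughout I fix a countable $\|\cdot\|_2$-dense sequence $(x_k)_{k \geq 1} \subseteq (M)_1$, which exists because $M$ is separable.

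For the forward direction, I would argue by contraposition. Supposing the finitary condition fails with witnesses $\epsilon > 0$ and finite $F \subseteq (M)_1$, the negation applied with $\delta = 1/n$ and $G = \{x_1, \ldots, x_n\}$ produces, for every $n$, a pair $(p_n, q_n) \in \mathcal{P}_2(M)$ that $(1/n)$-almost commutes with $G$ yet admits no $u \in U(M)$ with $\|u p_n u^* - q_n\|_2 < \epsilon$ and $\max_{x \in F}\|[x, u]\|_2 < \epsilon$. Setting $p := (p_n)_\u$ and $q := (q_n)_\u$ gives a pair in $\mathcal{P}_2(M' \cap M^\u)$ by density of $(x_k)$, so factoriality of $M' \cap M^\u$ furnishes $U \in U(M' \cap M^\u)$ with $UpU^* = q$. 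Lifting $U$ to a sequence $(u_n)_\u$ of unitaries in $M$ (e.g., by Borel functional calculus, or the standard ultrapower lift for the definable set of unitaries discussed in Subsection~\ref{subsection:definability}), the $\u$-limits $\|u_n p_n u_n^* - q_n\|_2 \to 0$ and $\|[x, u_n]\|_2 \to 0$ for $x \in F$ contradict the choice of $(p_n, q_n)$ at some $n$ along $\u$.

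For the reverse direction, the plan is to take an arbitrary $(p, q) \in \mathcal{P}_2(M' \cap M^\u)$, lift to $(p_n, q_n) \in \mathcal{P}_2(M)$ (using that $\mathcal{P}_2$ is definable, so ultraproduct-stable), and construct a conjugating unitary $U \in M' \cap M^\u$ by diagonalization. Applying the finitary condition with $\epsilon_k = 1/k$ and $F_k = \{x_1, \ldots, x_k\}$ yields $\delta_k$ and $G_k$; the sets
$$A_k := \{n : \max_{x \in G_k}\max(\|[x,p_n]\|_2, \|[x,q_n]\|_2) < \delta_k\}$$
lie in $\u$ because $p$ and $q$ commute with every element of $M$. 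Choosing $u_n \in U(M)$ to witness the finitary conclusion for $k(n) := \max\{k \leq n : n \in A_k\}$, and noting that $k(n) \to \infty$ along $\u$ because each finite intersection of the $A_k$ is $\u$-large, one obtains $U := (u_n)_\u \in M' \cap M^\u$ with $UpU^* = q$. This produces the required unitary for every element of $\mathcal{P}_2(M' \cap M^\u)$, so $M' \cap M^\u$ is a factor.

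I expect the main technical care to lie in the reverse direction's diagonal bookkeeping --- arranging $k(n) \to \infty$ along $\u$ while simultaneously controlling $\|u_n p_n u_n^* - q_n\|_2$ and each $\|[x_j, u_n]\|_2$ --- but this follows the standard pattern for such ultraproduct extractions. The forward direction is essentially unpacking of the ultrapower conclusion, with the only non-routine point being the lift from a unitary in $M^\u$ to a sequence of unitaries in $M$, which is handled by the definable-set machinery already developed in Subsection~\ref{subsection:definability}.
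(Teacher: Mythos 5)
Your proposal is correct and follows essentially the same route as the paper: the forward direction is the identical contradiction argument (ultralimits of bad pairs $(p_n,q_n)$ almost commuting with an exhausting sequence of finite sets, then conjugating in the factor $M'\cap M^{\mathcal U}$ and lifting the unitary). The only difference is in the reverse direction, where the paper invokes countable saturation of $M^{\mathcal U}$ to pass from approximate conjugating unitaries (for each $\epsilon$ and finite $F$) to an exact one in $M'\cap M^{\mathcal U}$, while you construct the exact unitary directly by diagonalizing over a dense sequence --- a correct, hands-on unwinding of the same step.
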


\begin{proof}
First suppose that $M'\cap M^\u$ is a factor and yet, towards a contradiction, the condition fails for a particular $\epsilon$ and $F$.  Let $(G_n)$ denote a sequence of finite subsets of $(M)_1$ whose union is SOT-dense in $(M)_1$.  Then by the contradiction assumption, for each $n\geq 1$, there are projections $(p_n,q_n)\in \cal P_2(M)$ witnessing that $\delta=\frac{1}{n}$ and $G=G_n$ do not satisfy the conclusion of the forward direction for this particular choice of $\epsilon$ and $F$.  Set $p=(p_n)_\u$ and $q=(q_n)_\u$.  Note then that $(p,q)\in \cal P_2(M'\cap M^\u)$.  Since $M$ is super McDuff, there is $u\in U(M'\cap M^\u)$ such that $upu^*=q$.  Consequently, for $\u$-almost all $n$, we have that $\|u_np_nu_n^*-q_n\|_2<\epsilon$ and $\max_{x\in F}\|[x,u_n]\|_2<\epsilon$, a contradiction to the choice of $(p_n,q_n)$.

For the converse direction, suppose the condition holds.  Fix $(p,q)\in \cal P_2(M'\cap M^\u)$; we aim to show that $p$ and $q$ are unitarily conjugate in $M'\cap M^\u$.  By saturation, it suffices to show that, for each $\epsilon>0$ and finite $F\subseteq (M)_1$, there is $u\in U(M^\u)$ such that $\|upu^*-q\|_2\leq \epsilon$ and $\max_{x\in F}\|[x,u]\|_2\leq \epsilon$.  Let $\delta$ and $G$ be as in the condition for $\epsilon$ and $F$.  Write $p=(p_n)_\u$ and $q=(q_n)_\u$ with $(p_n,q_n)\in \cal P_2(M)$ for all $n\in \bb N$.  Note that, for $\u$-almost all $n$, we have $\max_{x\in G}\max(\|[x,p_n]\|_2,\|[x,q_n]\|_2)<\delta$.  Consequently, for these $n$, there is $u_n\in U(M)$ such that $\|u_np_nu_n^*-q_n\|_2<\epsilon$ and $\max_{x\in F}\|[x,u_n]\|_2<\epsilon$.  Let $u=(u_n)_\u\in U(M^\u)$.  Then $\|upu^*-q\|_2\leq \epsilon$ and $\max_{x\in F}\|[x,u]\|_2\leq \epsilon$, as desired.
\end{proof}

\begin{remark}
While the original definition of super McDuff was only for separable II$_1$ factors, we can use the finitary criterion appearing in Proposition \ref{finitary} to give a definition of super McDuffness for arbitrary II$_1$ factors (using also that there is a finitary definition of McDuffness that makes sense for arbitrary tracial von Neumann algebras; see \cite[Proposition 3.9]{FarahHartSherman-MTOA3}).
\end{remark}

The condition in Proposition \ref{finitary} lends itself naturally to a uniform version, namely that, in the criterion of Proposition \ref{finitary}, both $\delta$ and the size of $G$ may be taken to depend only on $\epsilon$ and the size of $F$.  Said somewhat differently:

\begin{defn}
The McDuff II$_1$ factor $M$ is \textbf{uniformly super McDuff} if and only if, for all $n\geq 1$, there is $m\geq 1$ such that:  for all $F\subseteq (M)_1$ with $|F|\leq n$, there is $G\subseteq (M)_1$ with $|G|\leq m$ for which, given any $(p,q)\in \cal P_2(M)$ with $$\max_{x\in G}\max(\|[x,p]\|_2,\|[x,q]\|_2)<\frac{1}{m},$$ there is $u\in U(M)$ with $\|upu^*-q\|_2<\frac{1}{n}$ and $\max_{x\in F}\|[x,u]\|_2<\frac{1}{n}$.
\end{defn}

\begin{remark}\label{nonGammauniform}
    Note that the uniformity condition described in the previous definition is automatic in the case that $M$ does not have property Gamma.  Indeed, in this situation, given $n\geq 1$, there is a finite subset $K\subseteq (M)_1$ and $m\geq 1$ such that, for all contractions $x\in M$, if $\max_{y\in K}\|[x,y]\|_2<\frac{1}{m}$, then $d(x,\bb C)<\frac{1}{2n}$.  Consequently, if $(p,q)\in \cal P_2(M)$ are such that $\max_{y\in K}\max(\|[p,y]\|_2,\|[q,y]\|_2)<\frac{1}{m}$, then $d(p,\bb C),d(q,\bb C)<\frac{1}{2n}$, whence $\|p-q\|_2<\frac{1}{n}$ and we may take $u=1$ in the above definition.
\end{remark}

Now we are ready to prove that the Brown property is equivalent to the uniform super McDuff property, as well as several other conditions:

\begin{thm} \label{prop: list-equiv-ppties}
For a II$_1$ factor $M$, the following are equivalent:
\begin{enumerate}
    \item $M$ is uniformly super McDuff.
    \item Every $N\equiv M$ is uniformly super McDuff.
    \item Every $N\equiv M$ is super McDuff.  
    \item $M^\u$ is super McDuff for every nonprincipal ultrafilter $\u$ on $\bb N$.
    \item $M^\u$ is super McDuff for some nonprincipal ultrafilter $\u$ on $\bb N$.
    \item Every separable $N\equiv M$ is super McDuff.
    \item $M$ has the Brown property.
\end{enumerate}
\end{thm}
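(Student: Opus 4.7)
The plan is to establish the cycle $(1)\Rightarrow(2)\Rightarrow(3)\Rightarrow(6)\Leftrightarrow(7)\Rightarrow(1)$, along with side-implications handling $(4)$ and $(5)$. The central observation is that each $(n,m)$-version of the defining condition of uniform super McDuff is first-order axiomatizable. Using the definability of $\cal P_2$ and the unitary group $U$ recorded in Section \ref{subsection:definability}, for each $n,m\geq 1$ one obtains a continuous first-order sentence $\sigma_{n,m}$ of the schematic form $\sup_{\vec y}\inf_{\vec z}\sup_{(p,q)\in\cal P_2}\inf_{u\in U}\Phi_{n,m}(\vec y,\vec z,p,q,u)$, where $\vec y$ ranges over $n$-tuples from the unit ball, $\vec z$ over $m$-tuples, and $\Phi_{n,m}$ is a quantifier-free expression encoding the conditional that if the commutators of $(p,q)$ with $\vec z$ are less than $1/m$, then some unitary $u$ conjugates $p$ to $q$ up to $\|\cdot\|_2$-error $1/n$ while commuting with $\vec y$ up to $1/n$. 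One then has $\sigma_{n,m}^M=0$ precisely when the $(n,m)$-condition holds in $M$. Since each $\sigma_{n,m}^M$ is preserved under elementary equivalence, $(1)\Leftrightarrow(2)$ is immediate, and $(2)\Rightarrow(3)$ follows from the definitions. Immediate corollaries are $(3)\Rightarrow(6)$ (restriction to separable), $(3)\Rightarrow(4)$ (apply to $M^\u\equiv M$), and $(4)\Rightarrow(5)$.

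For the middle cluster, I would argue $(5)\Leftrightarrow(6)\Leftrightarrow(7)$. For $(5)\Rightarrow(6)$, let $N\equiv M$ be separable; under CH one has $N^\u\cong M^\u$, hence $N^\u$ is super McDuff; the finitary criterion of Proposition \ref{finitary} then descends to $N$ via the elementary embedding $N\preceq N^\u$, since the inner condition ``$(\delta,G)$ witnesses super McDuffness at $\vec y$'' is first-order in $\vec y,G,\delta$. For $(6)\Rightarrow(5)$, a given $\vec y\in M^\u$ lies in a separable $N_0\preceq M^\u$ with $N_0\equiv M$, whence super McDuffness of $N_0$ produces witnesses that transfer up to $M^\u$ by elementarity. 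A similar use of downward L\"owenheim--Skolem gives $(6)\Rightarrow(3)$: non-separable $N\equiv M$ reduces to the separable case tuple by tuple. Finally, $(6)\Leftrightarrow(7)$ is \cite[Proposition 6.2.4]{AGKE-GeneralizedJung}.

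The main obstacle is closing the cycle with $(5)\Rightarrow(1)$. The plan is by contradiction: assuming $(1)$ fails, there is some $n$ for which $\sigma_{n,m}^M>0$ for all $m$, and by elementarity the same holds in $M^\u$. Write $\rho_{n,m}(\vec y)$ for the subformula of $\sigma_{n,m}$ measuring failure of the $(n,m)$-condition at the specific tuple $\vec y$, so that $\sigma_{n,m}=\sup_{\vec y}\rho_{n,m}(\vec y)$. A padding argument---extending an $m$-tuple $\vec z$ to $(\vec z,0)$ of length $m{+}1$---yields the monotonicity $\rho_{n,m+1}(\vec y)\leq\rho_{n,m}(\vec y)$. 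Using $\aleph_1$-saturation of $M^\u$ together with this monotonicity, the plan is to produce a single $\vec y\in(M^\u)^n$ with $\rho_{n,m}(\vec y)>0$ for all $m$. Such a $\vec y$ fails the extended finitary criterion at itself, so restricting to any separable $N\preceq M^\u$ containing $\vec y$ yields a separable $N\equiv M$ that is not super McDuff, contradicting $(6)$. The delicate technical point is that ``$\rho_{n,m}(\vec y)>0$'' is an open condition in continuous logic and is not directly a type condition; it must be approximated by the closed conditions ``$\rho_{n,m}(\vec y)\geq r_m$'' for an appropriately chosen positive sequence $(r_m)$, with finite satisfiability of the resulting type verified by using the monotonicity of $\rho_{n,m}$ in $m$ to transfer a near-extremal tuple at the largest index to all smaller indices. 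This saturation step is the technical heart of the proof.
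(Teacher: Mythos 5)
Your decomposition of the implication cycle is legitimate and most of the pieces are sound: the axiomatization of the $(n,m)$-condition by a continuous sentence and the resulting $(1)\Leftrightarrow(2)$ is exactly the paper's move; your $(5)\Leftrightarrow(6)$ and $(6)\Rightarrow(3)$ via downward L\"owenheim--Skolem match the paper's treatment of $(5)\Rightarrow(6)$ and $(6)\Rightarrow(4)$; and $(6)\Leftrightarrow(7)$ is correctly cited. The genuine divergence, and the genuine gap, is in your plan for closing the cycle.

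You attempt the analogue of the paper's $(4)\Rightarrow(1)$ by finding, via $\aleph_1$-saturation, a single tuple $\vec y\in(M^{\u})^n$ with $\rho_{n,m}(\vec y)>0$ for all $m$, where $\rho_{n,m}(\vec y)=\inf_{\vec z}\sup_{(p,q)\in\cal P_2}\varphi_{n,m}(\vec y,\vec z,p,q)$. You correctly note the monotonicity $\rho_{n,m+1}\le\rho_{n,m}$ and propose the type $\{\rho_{n,m}(\vec y)\ge r_m\}_{m}$ with $r_m>0$, verifying finite satisfiability by choosing $\vec y$ near-extremal for $\rho_{n,m_k}$ at the largest index $m_k$ and transferring down. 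The problem is that this requires, for every finite set with top index $m_k$, that $r_{m_i}\le\sigma_{n,m_k}^{M^{\u}}-\ve$ for each smaller index $m_i$. Since $m_k$ ranges over all integers $\ge m_i$, this forces $r_{m_i}\le\lim_{m\to\infty}\sigma_{n,m}^{M^{\u}}$. But failure of $(1)$ only guarantees $\sigma_{n,m}^{M^{\u}}>0$ for each $m$; since the sequence is decreasing, its limit may well be $0$, in which case no positive sequence $(r_m)$ makes your type finitely satisfiable by the route you describe. The sentences $\sigma_{n,m}$ encode the $(n,m)$-condition only up to a perturbation of the numerical thresholds, and there is no a priori reason the limit must stay away from zero when $M$ fails to be uniformly super McDuff.

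The paper's $(4)\Rightarrow(1)$ argument is designed precisely to sidestep this. Rather than looking for one tuple that is ``uniformly bad for all $m$ at once,'' it fixes, for each $m$, a tuple $\vec x^{\,m}\in M^n$ for which no $G$ of size $\le m$ (with threshold $1/m$) works, and forms the ultraproduct $\vec x=(\vec x^{\,m})_{\u}\in(M^{\u})^n$. Super McDuffness of $M^{\u}$ then yields a $\delta>0$ and $G=\{y_1,\ldots,y_t\}\subseteq M^{\u}$ for this $\vec x$. Writing $y_j=(y_j^m)_{\u}$, the choice of $\vec x^{\,m}$ gives $(p_m,q_m)\in\cal P_2(M)$ whose commutators with the $y_j^m$ are $<1/m$ but with no good unitary; forming $(p,q)=\bigl((p_m)_{\u},(q_m)_{\u}\bigr)$, the key point is that $1/m\to 0$ forces $[p,y_j]=[q,y_j]=0$ \emph{exactly}, so $(p,q)$ is within the $\delta$-commutation window no matter how small $\delta$ is, and a second application of super McDuffness produces a $u\in U(M^{\u})$ whose coordinates contradict the choice of $(p_m,q_m)$ $\u$-a.e. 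This argument makes no reference to a uniform lower bound on the $\sigma_{n,m}$ and is insensitive to how fast they decay. If you want to fill the gap in your write-up, the cleanest route is to replace your saturation step with this ultraproduct-of-failures argument.
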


\begin{proof}
(1) implies (2): Suppose that $M$ is uniformly super McDuff.  Fix $n\geq 1$ and take the corresponding $m\geq 1$ as in the definition of uniformly super McDuff.  Consider the sentence $\sigma_{n,m}=\sup_{x_1,\ldots,x_n}\inf_{y_1\ldots,y_m}\sup_{(p,q)\in \cal P_2}\varphi_{n,m}(\vec x,\vec y,p,q)$, where $\varphi_{n,m}(\vec x,\vec y,p,q)$ is the following formula:
\begin{multline*}
\min\biggl(\frac{1}{m} \dminus \max_{j=1,\ldots,m}\max(\|[p,y_j]\|_2,\|[q,y_j]\|_2), 
\inf_{u\in U(M)}\max(\|upu^*-q\|_2,\max_{i=1,\ldots,n}\|[[u,x_i]\|_2)\dminus \frac{1}{n} \biggr).
\end{multline*}
Then $\sigma_{n,m}^M=0$ by assumption, whence the same is true for any elementarily equivalent structure $N$.  We use this to show that $N$ is uniformly super McDuff.  Fix $\eta>0$ with $\eta<\min(\frac{1}{n},\frac{1}{m})$; we claim that $\delta:=\frac{1}{m}-\eta$ and $m$ witnesses the uniform super McDuffness of $N$ for finite sets of size at most $n$ and with $\epsilon=\frac{2}{n}$.  To see this, fix $x_1,\ldots,x_n\in N$ and take $y_1,\ldots,y_m\in N$ so that $\sup_{(p,q)\in \cal P_2}\varphi_{n,m}(\vec x,\vec y,p,q)<\eta$.  Now suppose that $(p,q)\in \cal P_2(N)$ is such that $$\max_{j=1,\ldots,m}\max(\|[p,y_j]\|_2,\|[q,y_j]\|_2)<\frac{1}{m}-\eta.$$  It follows that the first entry of $\varphi_{n,m}(\vec x,\vec y,p,q)$ is greater than $\eta$, whence the second entry is smaller than $\eta$.  Consequently, there is a unitary which conjugates $p$ to within $\frac{1}{n}+\eta < \frac{2}{n}$ of $q$ and which $\frac{1}{n}+\eta < \frac{2}{n}$ commutes with each $x_i$, as desired.

The implications (2) implies (3),  (3) implies (4), and (4) implies (5) are all trivial.  To see that (5) implies (6), assume that $M^\u$ is super McDuff and fix a separable $N\equiv M$.  By considering an elementary embedding $N\hookrightarrow M^\u$, we may assume that $N$ is an elementary substructure of $M^\u$.  Fix $n\geq 1$ and finitely many $x_1,\ldots,x_n\in N$.  Since $M^\u$ is super McDuff, we get a corresponding $\delta>0$ and finite $G\subseteq (M^\u)_1$.  If $m\geq \frac{1}{\delta}$ and $G=\{y_1,\ldots,y_m\}$ (possibly with repetitions), we have that $(\sup_{(p,q)\in \cal P_2}\varphi_{n,m}(\vec x,\vec y,p,q))^{M^\u}=0$, where $\varphi_{n,m}$ is as in the implication (1) implies (2).  Since $N$ is an elementary substructure of $M^\u$, it follows that $$\left(\inf_{\vec y}\sup_{(p,q)\in \cal P_2}\varphi_{n,m}(\vec x,\vec y,p,q)\right)^N=0.$$  If $z_1,\ldots,z_m\in N$ witness that the existential is less than $\eta<\min(\frac{1}{n},\frac{1}{m})$, as in the implication (1) implies (2), we see that $\delta=\frac{1}{m}$ and $z_1,\ldots,z_m$ witness the super McDuffness of $N$ for $x_1,\ldots,x_n$ and the error $\frac{2}{n}$.

(6) implies (4):  Fix $n\geq 1$ and $x_1,\ldots,x_n\in M^\u$.  Take a separable elementary substructure $N$ of $M^\u$ containing $x_1,\ldots,x_n$.  By (6), $N$ is super McDuff.  Let $m\geq 1$ be such that $\delta=\frac{1}{m}$ and $y_1,\ldots,y_m\in N$ are as in the definition of super McDuffness for $\epsilon=\frac{1}{n}$ and $x_1,\ldots,x_n$.  It follows that $(\sup_{(p,q)\in \cal P_2}\varphi_{n,m}(\vec x,\vec y,p,q))^N=0$.    By elementarity, we have that $(\sup_{(p,q)\in \cal P_2}\varphi_{n,m}(\vec x,\vec y,p,q))^{M^\u}=0$.  Thus, $\frac{1}{m}$ and $y_1,\ldots,y_m$ witness super McDuffness of $M^\u$ for $\epsilon=\frac{1}{n}$ and $x_1,\ldots,x_n$.

(4) implies (1): Suppose, towards a contradiction, that $M$ is not uniformly super McDuff.  Then there is some $n\geq 1$ such that no pair $m$ witnesses uniform super McDuffness of $M$ for this $n$.  Consequently, for each $m\geq 1$, there are $x_1^m,\ldots,x_n^m\in M$ such that no $G\subseteq (M)_1$ of size at most $m$  witnesses the truth of super McDuffness for $M$.  We can then consider the elements $x_j=(x_j^m)_\u\in M^\u$.  By assumption, $M^\u$ is super McDuff, so there are $\delta>0$ and finite $G=\{y_1,\ldots,y_t\}\in M^\u$ that witness super McDuffness for the finite subset $\{x_1,\ldots,x_n\}$ of $M^\u$ and $\epsilon=\frac{1}{n}$.  Write $y_j=(y_j^m)_\u$ for each $j=1,\ldots,t$.  By assumption, for each $m\geq t$, there is $(p_m,q_m)\in \cal P_2(M)$ with $\max_{j=1,\ldots,t}(\|[p_m,y_j^m]\|_2,\|[q_m,y_j^m]\|_2)<\frac{1}{m}$ and yet no unitary in $M$ conjugates $p_m$ to within $\frac{1}{n}$ of $q_m$ while also $\frac{1}{n}$-commuting with each $x_j^m$ for $j=1,\ldots,n$.  Let $p=(p_m)_\u$ and $q=(q_m)_\u$, whence $(p,q)\in \cal P_2(M^\u)$ and $[p,y_j]=[q,y_j]=0$ for all $j=1,\ldots,t$.  It follows that there is $u\in U(M^\u)$ such that $\|upu^*-q\|_2<\frac{1}{n}$ and $\max_{j=1,\ldots,n}\|[u,x_j]\|_2< \frac{1}{n}$.  Write $u=(u_m)_\u$ with each $u_m\in U(M)$.  Then for $\u$-almost all $m$, we have $\|u_mp_mu_m^*-q_m\|_2<\frac{1}{n}$ and $\max_{j=1,\ldots,n}\|[u_m,x_j^m]\|_2<\frac{1}{n}$, contradicting the choice of $p_m$ and $q_m$.

At this point, we have that (1)-(6) are equivalent.  However, as mentioned in the introduction, it was shown in \cite[Proposition 6.2.4]{AGKE-GeneralizedJung} that (6) and (7) are equivalent, finishing the proof of the theorem.
\end{proof}

\begin{remark}
By the comments made at the end of Subsection \ref{subsection:definability}, we see that the sentences appearing in the above proof are $\forall_4$-sentences.  The quantifier-complexity of these concrete sentences agrees with the fact established in \cite{GoldbringHart-FewQfsForThR}, proven using Ehrenfeucht-Fra\"isse games, that any II$_1$ factor with the same four quantifier theory as a II$_1$ factor with the Brown property also has the Brown property.
\end{remark}

Since $\R$ has the Brown property, by Theorem \ref{prop: list-equiv-ppties}, we know that $\R$ is uniformly super McDuff.  In Subsection \ref{subsec:stronglyMcDuff} below, we will give a direct argument that $\R$ is uniformly super McDuff.

\section{Spectral gap and the uniform super McDuff property}

In this section, we describe a sufficient condition for the uniform super McDuff property in terms of spectral gap, and deduce that infinitely generic II$_1$ factors and strongly McDuff II$_1$ factors are both uniformly super McDuff.

\subsection{Spectral gap criterion}

We begin by recalling the definition of spectral gap for a subalgebra of a tracial factor.

\begin{defn}
Let $M$ be a tracial factor and $A \subseteq M$ a von Neumann subalgebra. Then $A \subseteq M$ has \textbf{spectral gap} if there exists a finite $F \subseteq (A)_1$ and a constant $C>0$ such that, for all $\xi \in L^2(M)$, we have
\[
\norm{\xi - E_{A' \cap M}(\xi)}_2^2 \leq C \sum_{x \in F} \norm{x \xi - \xi x}_2^2.
\]
If, in addition, $|F|\leq C$, then we call $C$ a \textbf{spectral gap number} for $A\subseteq M$.  We let $\operatorname{SG}(A,M)$ denote the smallest spectral gap number for $A$ inside $M$.  We also write $\operatorname{SG}(M) = \operatorname{SG}(M,M)$.
\end{defn}

\begin{remark}
    In particular, $M \subseteq M$ has spectral gap if there exists a finite $F\subseteq (M)_1$ and a constant $C>0$ such that, for all $\xi\in L^2(M)$, we have
\[
\norm{\xi - \tau(\xi)\cdot 1}_2^2 \leq C \sum_{x \in F} \norm{x \xi - \xi x}_2^2.
\]

Connes showed in \cite[Theorem 2.1]{Connes1976} that a tracial factor $M\subseteq M$ has spectral gap if and only if $M$ is full (equivalently $M$ does not have property Gamma in the case of a II$_1$ factor).
\end{remark}



For a tracial von Neumann algebra $M$, subsets $X,Y\subseteq M$, and $\epsilon>0$, we write $X\subseteq_\epsilon Y$ to mean that every element of $X$ is within $\epsilon$ (with respect to the 2-norm) of some element of $Y$.

\begin{defn}
We say that a tracial factor $M$ has the \textbf{factorial commutant limit spectral gap property} or \textbf{FC limit spectral gap property} if there exists some constant $s>0$ such that, for any finite $F \subseteq M$ and $\epsilon > 0$, there is some von Neumann subalgebra $A \subseteq M$ such that $A' \cap M$ is a factor, $\operatorname{SG}(A,M) \leq s$, and $F \subseteq_\epsilon A$.
\end{defn}

The following proposition provides a sufficient criterion for a McDuff II$_1$ factor to be uniformly super McDuff:

\begin{prop}\label{sgapcriterion}
If $M$ is a McDuff II$_1$ factor with the FC limit spectral gap property, then $M$ is uniformly super McDuff.
\end{prop}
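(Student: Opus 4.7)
The plan is to use the FC limit spectral gap property of $M$ to produce, for each finite $F \subseteq (M)_1$, a von Neumann subalgebra $A \subseteq M$ that approximately contains $F$, has \emph{type II}$_1$ factorial relative commutant, and admits a spectral gap set $G \subseteq (M)_1$ of size bounded by the universal constant $s$. Given such $A$ and $G$, for any $(p,q) \in \mathcal{P}_2(M)$ nearly commuting with $G$, the spectral gap inequality forces $p, q$ to lie close to $A' \cap M$; after rounding to projections $p', q' \in A' \cap M$ and trace-adjusting (which is possible because $A' \cap M$ is II$_1$), a unitary $u \in U(A' \cap M)$ conjugating the adjusted projections automatically commutes with $A$, and hence nearly commutes with $F$.

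The delicate point is forcing $A' \cap M$ to be type II$_1$ rather than merely a factor: a type I$_k$ relative commutant would spoil uniformity since projection traces there lie in the discrete set $\{0, 1/k, \ldots, 1\}$ with uncontrolled granularity $1/k$. This is where McDuffness is essential. Fix an isomorphism $M \cong M_0 \otimes R_0$ with $M_0 \cong M$ and $R_0 \cong \R$, together with an SOT-dense increasing chain of matrix subalgebras $R_0^{(k)} \subseteq R_0$. Set $M^{(k)} := M_0 \otimes R_0^{(k)}$; since McDuff factors absorb matrix algebras (as $M \otimes M_n \cong M \otimes R \otimes M_n \cong M \otimes R \cong M$), we have $M^{(k)} \cong M$, and therefore $M^{(k)}$ also satisfies the FC limit spectral gap property with the same constant $s$. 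For given $F$ and $\epsilon > 0$, choose $k$ large enough that each $f \in F$ lies within $\epsilon$ of $E_{M^{(k)}}(f) \in (M^{(k)})_1$, and apply FC limit spectral gap inside $M^{(k)}$ at scale $\epsilon$ to $\{E_{M^{(k)}}(f) : f \in F\}$, obtaining $A \subseteq M^{(k)}$ with $A' \cap M^{(k)}$ a factor, $\operatorname{SG}(A, M^{(k)}) \leq s$ via a spectral gap set $G$ of size $\leq s$, and $F \subseteq_{2\epsilon} A$. In the tensor decomposition $M = M^{(k)} \otimes ((R_0^{(k)})' \cap R_0)$ one checks directly that $A' \cap M = (A' \cap M^{(k)}) \otimes ((R_0^{(k)})' \cap R_0)$, which is a II$_1$ factor since $(R_0^{(k)})' \cap R_0 \cong \R$; and the spectral gap inequality decomposes over an orthonormal basis of $L^2((R_0^{(k)})' \cap R_0)$ to give $\operatorname{SG}(A, M) \leq s$ with the same $G$.

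The remainder is a standard perturbation argument. Given $n$, set $\epsilon := 1/(8n)$, pick $\delta_0 > 0$ small enough that $4s\delta_0 + 2\sqrt{s\delta_0} < 1/n$, and let $m := \max(\lceil s\rceil, \lceil 1/\delta_0\rceil)$. Produce $A, G$ as above for this $\epsilon$, so $|G| \leq m$. For any $(p,q) \in \mathcal{P}_2(M)$ with $G$-commutators below $1/m \leq \delta_0$, spectral gap gives $\|p - E_{A'\cap M}(p)\|_2 \leq s\delta_0$ and likewise for $q$. Combining $|x - \chi_{[1/2,1]}(x)| \leq 2|x - x^2|$ on $[0,1]$ with the tracial identity $\tau(E(p)(1-E(p))) = \|p - E(p)\|_2^2$ (valid since $p = p^2$) and $\|E(p)(1-E(p))\|_\infty \leq 1/4$, one rounds $E_{A'\cap M}(p), E_{A'\cap M}(q)$ to projections $p', q' \in A' \cap M$ with $\|p - p'\|_2, \|q - q'\|_2 \leq 2s\delta_0$, whence $|\tau(p') - \tau(q')| \leq 4s\delta_0$. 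In the II$_1$ factor $A' \cap M$, add or subtract a subprojection of trace $|\tau(p') - \tau(q')|$ to get $q'' \in A' \cap M$ with $\tau(q'') = \tau(p')$ and $\|q' - q''\|_2 \leq 2\sqrt{s\delta_0}$; then pick $u \in U(A' \cap M)$ with $up'u^* = q''$. One obtains $\|upu^* - q\|_2 \leq \|p - p'\|_2 + \|q'' - q\|_2 \leq 4s\delta_0 + 2\sqrt{s\delta_0} < 1/n$, and for each $x \in F$, choosing $a \in A$ with $\|x - a\|_2 \leq 2\epsilon$ gives $\|[u,x]\|_2 \leq 2\|x - a\|_2 \leq 4\epsilon < 1/n$ because $u$ commutes with $a$.

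The main obstacle I anticipate is thus the first step: upgrading ``$A' \cap M$ is a factor'' to ``$A' \cap M$ is a II$_1$ factor'' via the McDuff tensoring trick, which is what makes the trace-adjustment step of the perturbation argument feasible. Once this has been engineered, everything else is routine spectral gap estimation and standard perturbative reasoning about equi-trace projections in a II$_1$ factor.
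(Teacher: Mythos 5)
Your argument is correct, and its core is exactly the paper's proof of Proposition \ref{sgapcriterion}: use the FC limit spectral gap property to get $A$ with $F\subseteq_{\epsilon}A$, $A'\cap M$ a factor, and a witnessing set $G$ with $|G|\leq s$; deduce from near-commutation with $G$ that $p,q$ are $O(s\delta)$-close to $E_{A'\cap M}(p),E_{A'\cap M}(q)$; round via $\chi_{[1/2,1]}$ to projections of $A'\cap M$; conjugate by a unitary of $A'\cap M$, which automatically almost commutes with $F$. (Your rounding estimate via $\tau\bigl(a(1-a)\bigr)=\|p-a\|_2^2$ is in fact slightly cleaner than the paper's, and your $2\sqrt{s\delta_0}$ correctly tracks the square root when passing from a trace bound to a $\|\cdot\|_2$ bound.) Where you diverge is the preliminary McDuff tensoring reduction ($M=M_0\otimes R_0$, $M^{(k)}=M_0\otimes R_0^{(k)}$, replacing $A'\cap M^{(k)}$ by $(A'\cap M^{(k)})\otimes\bigl((R_0^{(k)})'\cap R_0\bigr)$) designed to upgrade ``$A'\cap M$ is a factor'' to ``$A'\cap M$ is a II$_1$ factor.'' That step is sound (the relative-commutant identity is standard, and transferring the spectral gap inequality across the extra tensor factor is precisely the argument of Lemma \ref{lem:tensorproduct}), but it is unnecessary, and the concern motivating it is misplaced: $A'\cap M$ is automatically a \emph{finite} factor, and in any finite factor, if $|\tau(\tilde p)-\tau(\tilde q)|\leq\eta$ then comparability of projections yields a unitary $u$ with $u\tilde pu^*-\tilde q$ equal to plus or minus a projection of trace at most $\eta$ --- no exact trace-matching, hence no divisibility of the trace, is needed; this is how the paper proceeds. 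Even your exact trace-adjustment survives a type I$_k$ relative commutant, since there $|\tau(p')-\tau(q')|$ is itself a multiple of $1/k$, so the feared granularity never intervenes; McDuffness enters the proposition only because the definition of uniformly super McDuff presupposes it. So what your extra step buys is only a psychological simplification (working in a II$_1$ relative commutant), at the cost of an isomorphism $M\cong M\otimes\mathcal R$ and a tensor-splitting argument the paper avoids.
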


\begin{proof}
Fix $n \geq 1$.  We claim that any $m > 24sn$ witnesses that $M$ is uniformly super McDuff.  Towards that end, fix $F \subseteq (M)_1$ with $|F| \leq n$.  Choose $A \subseteq M$ such that $A' \cap M$ is a factor, $\operatorname{SG}(A,M) \leq s$, and $F$ is contained in the $(1/2n)$-neighborhood of $A$.  Then there exists $G \subseteq (A)_1$ such that $|G| \leq s$ and
\[
\norm{\xi - E_{A'\cap M}(\xi)}_2^2 \leq s \sum_{x \in G} \norm{x \xi - \xi x}_2^2.
\]
In particular, if $(p,q) \in \mathcal{P}_2(M)$ and $\max_{x \in G} \max(\norm{[x,p]}_2,\norm{[x,q]}_2) \leq 1/m$, then
\[
\norm{p - E_{A' \cap M}(p)}_2^2 \leq \frac{s^2}{m^2}.
\]
Set $a = E_{A' \cap M}(p)$.  Since $\norm{a^2 - p^2}_2 < 2s/m$, we get $\norm{a - a^2}_2 < 3s/m$.  Set $\tilde{p} = \mathbf{1}_{[1/2,1]}(a)$, a projection in $A' \cap M$.  Since $|t - \mathbf{1}_{[1/2,1]}(t)| \leq 2t(1 - t)$ for $t \in [0,1]$, we have $\norm{p}_2 \leq 2 \norm{a(1 - a)}_2 < 6s/m$.  Similarly, there is a projection $\tilde{q}\in A'\cap M$ such that $\|\tilde{q}-q\|_2<6s/m$.

Next notice that $|\tau(\tilde{p}) - \tau(\tilde{q})| \leq 12s/m$.  Since $A' \cap M$ is a factor, there exists a unitary $u$ in $A' \cap M$ such that $u\tilde{p}u^* - \tilde{q}$ is plus or minus a projection of trace $< 12s/m$.  In particular, $\norm{u \tilde{p} u^* - \tilde{q}}_2 < 12s/m$, which implies $\norm{upu^* - q}_2 < 24s/m<1/n$.

Moreover, since we assumed that $F$ is contained in the $(1/2n)$-neighborhood of $A$ and $u$ belongs to $A' \cap M$, we have that $\norm{[u,x]}_2 < 1/n$ for $x \in F$.  Finally, note that $|G| \leq s \leq 24sn < m$, finishing the proof.
\end{proof}

\begin{remark}
It is interesting to note that in the previous proof, the size of $G$ only depends on the constant $s$ and does not depend on $n$.
\end{remark}


\subsection{Infinitely generic factors are uniformly super McDuff}

In this section, we use Proposition \ref{sgapcriterion} to show that infinitely generic factors are uniformly super McDuff.  This result improves the result of Chifan, Drimbe, and Ioana from \cite{ChifanDI-EmbUnivPptyT} showing that infinitely generic factors are super McDuff. A main ingredient in the proof of the latter result is the embedding universality of property (T) factors, also proved in \cite{ChifanDI-EmbUnivPptyT}.  Our proof in this subsection also utilizes this fact (and its proof).


Recall that a II$_1$ factor $N$ has \textbf{property (T)} if there is a finite $F\subseteq N$ and $\delta>0$ such that, whenever $\mathcal{H}$ is a normal $N$-$N$ bimodule with a unit vector $v\in \mathcal{H}$ satisfying $\max_{x\in F}\|x \xi - \xi x\|<\delta$, then there is a nonzero vector $\eta \in \mathcal{H}$ such that $x\eta = \eta x$ for all $x\in N$.  If $\Gamma$ is an ICC group, then $L(\Gamma)$ has property (T) if and only if $\Gamma$ has property (T).  Connes and Jones \cite{propertyT} also showed that property (T) for factors is equivalent to the following, seemingly stronger, property.  

\begin{thm}
    A II$_1$ factor has property (T) if and only if there is a finite subset $F \subseteq N$ (called a \textbf{Kazhdan set}) and a constant $K > 0$ such that, for every normal $N$-$N$ bimodule $\mathcal{H}$, we have
\[
\norm{\xi - P_{\text{central}}(\xi)}_{\mathcal{H}}^2 \leq K \sum_{x \in F} \norm{x \xi - \xi x}_{\mathcal{H}}^2,
\]
where $P_{\text{central}}$ is the projection onto the subspace of central vectors in $\mathcal{H}$.
\end{thm}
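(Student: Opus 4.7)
The backward direction is immediate: given the spectral gap inequality with finite $F \subseteq N$ and constant $K$, choose $\delta > 0$ with $K|F|\delta^2 < 1$. Then any unit vector $\xi$ in a normal bimodule $\mathcal{H}$ with $\max_{x \in F}\|x\xi - \xi x\| < \delta$ has $\|\xi - P_{\text{central}}(\xi)\|^2 < 1$, so $P_{\text{central}}(\xi)$ is a nonzero central vector, verifying property (T).

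For the forward direction, fix a Kazhdan pair $(F_0,\delta_0)$ for $N$. I would argue by contradiction that $F = F_0$ satisfies the spectral gap inequality with constant $K = 1/\delta_0^2$ (up to a combinatorial factor depending on $|F_0|$). Suppose otherwise; then for each $n \geq 1$ one extracts a normal bimodule $\mathcal{H}_n$ and a vector $\xi_n$ with
\[
\|\xi_n - P_{\text{central}}(\xi_n)\|^2 > n \sum_{x \in F_0}\|x\xi_n - \xi_n x\|^2.
\]
Setting $\eta_n := (\xi_n - P_{\text{central}}(\xi_n))/\|\xi_n - P_{\text{central}}(\xi_n)\|$ and noting that central vectors commute with all of $N$ (so that $\|x\eta_n - \eta_n x\|$ equals the corresponding rescaled commutator of $\xi_n$), one obtains unit vectors $\eta_n \in \mathcal{H}_n$ orthogonal to $\mathcal{H}_n^N$ with $\max_{x \in F_0}\|x\eta_n - \eta_n x\| \to 0$.

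It then remains to combine the $(\mathcal{H}_n, \eta_n)$ into a single normal bimodule possessing an almost-invariant unit vector but no nonzero central vector, thereby contradicting property (T). The main obstacle, and the reason this requires more than the direct analogue of the group-representation proof, is that the orthogonal complement of central vectors in a bimodule is generally \emph{not} a sub-bimodule --- the projection onto central vectors does not commute with the bimodule action, since for a central vector $v$ and $a \in N$ the vector $av$ need not be central. Handling this is the substance of the Connes--Jones argument. One possibility is to form an ultraproduct $\prod^{\mathcal{U}} \mathcal{H}_n$ and pass to a carefully chosen sub-bimodule --- for example the closed sub-bimodule generated by $(\eta_n)_{\mathcal{U}}$ --- in which central vectors are ruled out using the orthogonality of each $\eta_n$ to $\mathcal{H}_n^N$; an essentially equivalent alternative is to tensor each $\mathcal{H}_n$ with the coarse bimodule $L^2(N) \otimes L^2(N)$, which itself has no central vectors for $N$ a factor, embedding the problem into a bimodule setting in which the bimodule form of property (T) applies directly to yield the required contradiction.
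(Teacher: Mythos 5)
The paper offers no proof of this statement at all --- it is quoted from Connes and Jones \cite{propertyT} --- so your attempt can only be measured against that cited argument. Your backward direction is correct, and so is the normalization step in the forward direction (the commutators of $\xi_n$ and of $\xi_n - P_{\text{central}}(\xi_n)$ coincide because $P_{\text{central}}(\xi_n)$ is central). But the step you defer --- producing from the pairs $(\mathcal{H}_n,\eta_n)$ a single \emph{normal} bimodule containing an almost central unit vector yet no nonzero central vector --- is the entire content of the forward direction, and neither of your two suggestions delivers it. For the ultraproduct: a central vector of $\prod^{\mathcal{U}}\mathcal{H}_n$ is the class of any \emph{asymptotically} central sequence, not necessarily of a sequence of central vectors, so the orthogonality $\eta_n\perp\mathcal{H}_n^N$ excludes nothing; indeed nothing prevents $(\eta_n)_{\mathcal{U}}$ itself from being central (you only control commutators with the finite set $F_0$, and controlling all of $N$ would simply make it central), in which case the sub-bimodule it generates certainly contains a nonzero central vector and there is no conflict with property (T), whose conclusion is merely the \emph{existence} of some nonzero central vector. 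There is also the secondary problem that ultraproducts of normal bimodules are in general not normal, while the definition you are trying to contradict quantifies only over normal bimodules.

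The coarse-bimodule variant fares no better: unlike unitary representations of a group, $N$-$N$ bimodules admit no ``diagonal'' tensor product (there is no comultiplication on a von Neumann algebra), and the natural substitute, Connes fusion with $L^2(N)\otimes L^2(N)$, converts every bimodule into a sub-bimodule of a multiple of the coarse bimodule; since a property (T) II$_1$ factor is non-amenable, such a bimodule has no almost central unit vectors at all, so the almost-centrality of $\eta_n$ cannot be transported into that setting and again no contradiction can be reached. What is actually needed --- and what the Connes--Jones argument supplies --- is the stronger quantitative statement that an $(F,\delta)$-central unit vector in a normal bimodule is norm-close to a central vector, uniformly over bimodules; this is precisely the point the group-representation proof cannot give, because $(\mathcal{H}^N)^\perp$ is not a sub-bimodule, and it is also why your assertion that the original Kazhdan set $F_0$ works with constant essentially $1/\delta_0^2$ is itself part of what must be proved rather than automatic. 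As it stands, the forward direction of your argument has a genuine gap.
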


In particular, if $N \subseteq M$ is an inclusion of II$_1$ factors and $N$ has property (T), then $L^2(M)$ is a normal $N$-$N$-bimodule and therefore $N$ has spectral gap inside $M$.  Moreover, Tan \cite{Tan2022} showed the converse implication, namely that if a II$_1$ factor $N$ has spectral gap in $M$ for every inclusion $N \subseteq M$, then $N$ has property (T).  Hence, it is natural to leverage property (T) subalgebras of $M$ together with our spectral gap criterion for the uniform super McDuff property.

Parallel to the spectral gap number in the previous section, we introduce the following notion.

\begin{defn}
We say that $K$ is a \textbf{Kazhdan number} for a II$_1$ factor $N$ with property (T) if there exists a finite $F \subseteq (N)_1$ with $|F| \leq K$ such that for every normal $N$-$N$-bimodule $\mathcal{H}$,
\[
\norm{\xi - P_{\text{central}}(\xi)}_H^2 \leq K \sum_{x \in F} \norm{x \xi - \xi x}_H^2.
\]
We denote by $\operatorname{K}(N)$ the smallest possible Kazhdan number for $N$.
\end{defn}

Observe that if $N$ is a II$_1$ factor with property (T) and $N \subseteq M$ is an inclusion of tracial von Neumann algebras, then $\operatorname{SG}(N,M) \leq \operatorname{K}(N)$.

\begin{defn}
We say that a  II$_1$ factor $M$ has the \textbf{uniform local property (T)} if there exists some constant $s$ such that, for any finite $F \subseteq M$ and $\epsilon > 0$, there is some II$_1$ factor $N \subseteq M$ with property (T) such that $\operatorname{K}(N) \leq s$ and $F \subseteq_\epsilon A$.
\end{defn}

\begin{prop} \label{uniformpropT}
Let $M$ be an e.c.\ II$_1$ factor with the uniform local property (T).  Then $M$ has the FC limit spectral gap property, and hence is uniformly super McDuff.
\end{prop}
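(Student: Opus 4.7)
The plan is to reduce to Proposition~\ref{sgapcriterion} by verifying that $M$ has the FC limit spectral gap property with the constant $s$ supplied by the uniform local property (T) hypothesis. Given any finite $F \subseteq M$ and $\epsilon > 0$, the hypothesis produces a property (T) subfactor $N \subseteq M$ with $\operatorname{K}(N) \leq s$ and $F \subseteq_\epsilon N$. Since $L^2(M)$ is a normal $N$-$N$-bimodule, a Kazhdan set for $N$ of size at most $s$ realizing $\operatorname{K}(N)$ also realizes the spectral gap inequality for $N \subseteq M$, so $\operatorname{SG}(N, M) \leq s$ is immediate.

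The crux is to show $N' \cap M$ is a factor. First, applying the Kazhdan inequality to a representing sequence $(x_n)$ of an element of $N' \cap M^\u$ forces $\|x_n - E_{N' \cap M}(x_n)\|_2 \to 0$ along $\u$, so $N' \cap M^\u = (N' \cap M)^\u$, and it suffices to show $N' \cap M^\u$ is a factor. For this, I would combine the embedding universality of property (T) factors from \cite{ChifanDI-EmbUnivPptyT} with the e.c.\ hypothesis: the cited result produces an embedding of the separable property (T) factor $N$ into some infinitely generic II$_1$ factor with factorial relative commutant, and then one leverages the e.c.\ property of $M$ to transfer this factoriality to the given inclusion $N \subseteq M$. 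Explicitly, given $(p, q) \in \cal P_2(N' \cap M)$, finite $F_0 \subseteq N$, and $\epsilon' > 0$, the existence of a unitary $u$ in some extension of $M$ with $\|upu^* - q\|_2 < \epsilon'$ and $\max_{y \in F_0}\|[u, y]\|_2 < \epsilon'$ is an existential sentence with parameters in $M$, so by e.c.\ it admits an approximate witness $u \in U(M)$. Assembling these witnesses across $\u$ yields $u \in U(N' \cap M^\u)$ with $upu^* = q$, verifying factoriality.

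The main obstacle is ensuring that an extension $M \subseteq \tilde M$ is available in which the factoriality of $N' \cap \tilde M$ holds for the fixed inclusion $N \subseteq M \subseteq \tilde M$. The embedding universality result furnishes factoriality for a specific embedding of $N$ into an infinitely generic factor, which may not agree with our inclusion $N \subseteq M$ even after passing to extensions. Bridging this requires either Popa-style rigidity for property (T) embeddings into II$_1$ factor ultrapowers (showing that any two embeddings into such an ultrapower are unitarily conjugate, so factoriality of the relative commutant transfers from one embedding to all), or a direct construction of $\tilde M$ (for instance, via an amalgamated free product of $M$ with a suitable factor over $N$) in which factoriality is verified. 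Once this is settled, Proposition~\ref{sgapcriterion} delivers the conclusion.
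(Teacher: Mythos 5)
Your overall reduction is the same as the paper's: given finite $F$ and $\epsilon>0$, take the property (T) subfactor $N\subseteq M$ with $\operatorname{K}(N)\leq s$ and $F\subseteq_\epsilon N$, observe $\operatorname{SG}(N,M)\leq \operatorname{K}(N)\leq s$ because $L^2(M)$ is a normal $N$-$N$-bimodule, conclude the FC limit spectral gap property, and then apply Proposition \ref{sgapcriterion} (using that e.c.\ factors are McDuff, a point you should state explicitly since that proposition requires it). The only remaining ingredient in the paper is the factoriality of $N'\cap M$, and there the paper simply invokes \cite[Theorem 2.8]{Goldbring-PopaFCEP}: if $M$ is e.c.\ and $N\subseteq M$ has property (T), then $N'\cap M$ is a factor.

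This is exactly where your argument has a genuine gap, which you acknowledge but do not close. Your e.c.\ transfer needs, for each $(p,q)\in \cal P_2(N'\cap M)$, an actual extension $\tilde M\supseteq M$ containing a unitary that commutes (approximately) with a Kazhdan set of $N$ and conjugates $p$ near $q$; only once such an extension is exhibited does existential closedness let you pull an approximate witness back into $M$. The embedding universality of property (T) factors from \cite{ChifanDI-EmbUnivPptyT} does not provide this -- it goes in the opposite direction (every II$_1$ factor embeds into a property (T) factor) -- and the statement you seem to be reaching for, that $N$ embeds into some (infinitely generic) ultrapower with factorial relative commutant, comes from \cite{Goldbring-PopaFCEP} and concerns \emph{some} embedding of $N$, not the given inclusion $N\subseteq M\subseteq\tilde M$; no rigidity result making all such embeddings unitarily conjugate is available from what you cite, so factoriality cannot simply be transported. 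Producing the required extension (or otherwise proving factoriality of $N'\cap M$ for property (T), or more generally w-spectral gap, subalgebras of e.c.\ factors) is precisely the content of the cited Theorem 2.8 of \cite{Goldbring-PopaFCEP}; as written, your proof defers this crucial step and is therefore incomplete unless you quote that result or reproduce its argument. Your auxiliary observation that spectral gap gives $N'\cap M^\u=(N'\cap M)^\u$, so that factoriality of $N'\cap M$ follows from factoriality of $N'\cap M^\u$, is correct but does not bridge this gap.
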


\begin{proof}
Let $s$ be as in the definition of uniform local property (T).  If $F \subseteq M$ is a finite subset, then there exists some II$_1$ factor $N \subseteq M$ with property (T) such that $\operatorname{K}(N) \leq s$ and $F \subseteq_\epsilon N$.  Note $\operatorname{SG}(N,M) \leq \operatorname{K}(N) \leq s$.  Moreover, since $M$ is e.c.\ and $N$ has property (T), it follows that $N' \cap M$ is a factor (see \cite[Theorem 2.8]{Goldbring-PopaFCEP}).  Hence, $M$ has the FC limit spectral gap property.  Moreover, since e.c.\ II$_1$ factors are McDuff, Proposition \ref{sgapcriterion} implies that $M$ is uniformly super McDuff.
\end{proof}

\begin{remark}
One can make a similar argument if we drop the uniformity in the definition of uniform local property (T). Say that the II$_1$ factor $M$ has the \textbf{local property (T)} if:  for every $\epsilon>0$ and finite $F\subseteq M$, there is a property (T) subfactor $N$ of $M$ such that $F\subseteq_\epsilon N$.  Then an e.c.\ factor $M$ with local property (T) will be super McDuff.  Indeed, $M$ can be written as an inductive limit of property (T) subfactors $N_i$; just as in the previous proof, since $M$ is e.c., each $N_i' \cap M$ is a factor.  This implies that $N_i' \cap M^{\u}$ is a factor, and therefore $M'\cap M^\u$ is a factor by \cite[Proposition 6.3.12]{AGKE-GeneralizedJung}.
\end{remark}





Next, we will show that the previous proposition applies to \emph{some} infinitely generic II$_1$ factors.

\begin{prop} \label{prop:infgenericprop(T)}
There exists an infinitely generic II$_1$ factor with the uniform local property (T).
\end{prop}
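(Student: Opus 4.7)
The plan is to construct the desired infinitely generic factor as the direct limit of a carefully chosen ascending chain of separable II$_1$ factors, alternating two kinds of stages. Forcing stages will drive the limit toward infinite genericity; CDI (Chifan--Drimbe--Ioana) stages will populate the limit with property (T) subfactors whose Kazhdan numbers are uniformly bounded by a fixed constant.

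The crucial ingredient I would establish first is a uniform version of the CDI embedding universality of property (T) factors: there is a constant $s > 0$ such that every separable II$_1$ factor $N$ embeds into some separable II$_1$ factor $\widetilde{N}$ with property (T) satisfying $\operatorname{K}(\widetilde{N}) \leq s$. This would be extracted by inspecting the CDI construction, in which $\widetilde{N}$ is built from $N$ via amalgamated free product constructions with fixed property (T) building blocks (e.g.\ specific arithmetic lattices). The Kazhdan set and constant of $\widetilde{N}$ can then be chosen from these fixed ingredients, giving a bound independent of $N$.

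Given this uniform CDI embedding, I would construct the chain $M_0 \subseteq M_1 \subseteq M_2 \subseteq \cdots$ by interleaving stages. Fix an enumeration of pairs $(F_n, \epsilon_n)$ that is dense in the sense that every finite tuple from every $M_i$ is eventually $\epsilon$-approximated for every rational $\epsilon > 0$. At each CDI stage, apply the uniform embedding (with a free-amalgamation adjustment if needed to bring the prescribed tuple into the property (T) part) to incorporate a property (T) subfactor of Kazhdan number $\leq s$ containing an approximation of the prescribed tuple. At each forcing stage, pass to a separable superextension that settles the next forcing condition required for the infinite generic construction, as in \cite[Section 5]{ecfactors}. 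Since infinite model-theoretic forcing imposes no obstruction to taking arbitrary separable superextensions, the two kinds of stages can be freely interleaved without one undoing the other.

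The union $M = \bigcup_n M_n$ is then separable, infinitely generic (by the forcing stages together with standard properties of the infinite forcing construction), and has uniform local property (T) with constant $s$: given any finite $F \subseteq M$ and any $\epsilon > 0$, some $M_i$ contains a tuple within $\epsilon/2$ of $F$, which is $\epsilon/2$-approximated at the corresponding CDI stage by a property (T) subfactor $N \subseteq M_{i+1} \subseteq M$ with $\operatorname{K}(N) \leq s$. Hence $F \subseteq_\epsilon N$, as desired. The main obstacle is isolating the uniform Kazhdan bound $s$ from the CDI construction, which requires a careful analysis of their argument to ensure that the same Kazhdan set (from the fixed building blocks) works universally; once this uniform bound is established, the interleaving and bookkeeping are routine.
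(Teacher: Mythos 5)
Your proposal is correct and takes essentially the same route as the paper: both hinge on extracting from the Chifan--Drimbe--Ioana construction a uniform Kazhdan bound (this is exactly Theorem \ref{propTfact}: a fixed property (T) group $\Gamma$ such that every II$_1$ factor sits inside a factor generated by a representation of $\Gamma$, so the Kazhdan number is at most $K(\Gamma)$), and then building an alternating chain whose union inherits uniform local property (T). The only difference is how infinite genericity of the union is secured --- the paper alternates with infinitely generic factors (using their embedding universality) and invokes the fact that the union of such a chain is again infinitely generic, whereas you interleave explicit forcing stages, which works by persistence of infinite forcing under extensions but redoes bookkeeping the paper's citation sidesteps.
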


To prove this, we will use the following Theorem, which is extracted from the discussion at the end of the introduction to \cite{ChifanDI-EmbUnivPptyT}.

\begin{thm}[\cite{ChifanDI-EmbUnivPptyT}]\label{propTfact}
There is a fixed countable property (T) group $\Gamma$ such that:  given any II$_1$ factor $M$, there is a II$_1$ factor $N$ containing $M$ that is generated by a group homomorphism $\pi:\Gamma\to U(N)$.  Consequently, $N$ has property (T) and a Kazhdan set for $N$ can be taken to be the image under $\pi$ of a Kazhdan set for $\Gamma$ with associated Kazhdan constant equal to that of $\Gamma$.  As a result, the Kazhdan numbers of $N$ are uniformly bounded (independently of $M$) by the Kazhdan number $K(\Gamma)$ of $\Gamma$.
\end{thm}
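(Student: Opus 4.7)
The plan is to fix $\Gamma$ to be a specific countable property (T) group that contains a free subgroup $F_\infty$ of countably infinite rank; a natural choice is $\SL_3(\bb Z)$ or another higher-rank arithmetic lattice, which has property (T) by Kazhdan's theorem and contains non-abelian free subgroups of every countable rank. Let $K = K(\Gamma)$ denote a Kazhdan number for $\Gamma$ with associated Kazhdan set $F_0 \subseteq \Gamma$.

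Given any II$_1$ factor $M$, first fix a countable generating family of unitaries $\{v_i\}_{i \in \bb N}$ for $M$ (padding with $1$ if $M$ is finitely generated). Let $\{e_i\}_{i \in \bb N}$ be a free basis of $F_\infty \subseteq \Gamma$, and define a homomorphism $\rho_0 : F_\infty \to U(M)$ by $\rho_0(e_i) = v_i$; this induces an embedding $L(F_\infty) \hookrightarrow M$. Simultaneously, $F_\infty \subseteq \Gamma$ yields an inclusion $L(F_\infty) \hookrightarrow L(\Gamma)$. Now set
\[
N := M \ast_{L(F_\infty)} L(\Gamma),
\]
the amalgamated free product, and let $\pi : \Gamma \to U(N)$ be the natural homomorphism obtained from $\Gamma \hookrightarrow L(\Gamma) \hookrightarrow N$.

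By construction, $\pi(\Gamma)'' \supseteq L(\Gamma)$, and for $e_i \in F_\infty \subseteq \Gamma$ one has $\pi(e_i) = v_i$ under the identification built into the amalgamation; hence $\pi(\Gamma)'' \supseteq M$ as well. Since $N$ is generated by $M \cup L(\Gamma)$, we get $N = \pi(\Gamma)''$, and in particular $N \supseteq M$. The Kazhdan transfer is then essentially formal: any normal $N$--$N$ bimodule $\mathcal H$ carries a unitary $\Gamma$-representation $\gamma \cdot \xi := \pi(\gamma) \xi \pi(\gamma)^{-1}$, and the property (T) inequality for $\Gamma$ applied to $\pi(F_0)$ yields
\[
\norm{\xi - P_{\text{central}}(\xi)}_{\mathcal H}^2 \leq K \sum_{x \in \pi(F_0)} \norm{x\xi - \xi x}_{\mathcal H}^2,
\]
where $P_{\text{central}}$ projects onto $\Gamma$-central vectors. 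Because $N = \pi(\Gamma)''$, these coincide with $N$-central vectors, so $\pi(F_0)$ is a Kazhdan set for $N$ with the same constant $K$. Thus $\operatorname{K}(N) \leq K(\Gamma)$ independently of $M$.

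The main obstacle is ensuring that $N$ is actually a II$_1$ factor. Amalgamated free products of II$_1$ factors over a common II$_1$ subfactor are known to be factors under suitable \emph{mixing} hypotheses on the two inclusions $L(F_\infty) \hookrightarrow M$ and $L(F_\infty) \hookrightarrow L(\Gamma)$ (results going back to Popa and Ueda). The second inclusion is automatically mixing because $F_\infty$ is a malnormal-like free subgroup inside $\Gamma$. The first inclusion can be arranged to be mixing by choosing the generating set $\{v_i\}_{i \in \bb N}$ sufficiently ``generically''—for example, one first replaces $M$ by $M \ast L(\bb F_\infty)$ (which does not affect whether $M$ embeds into the eventual $N$) and uses the new free-group generators of the free factor as the distinguished unitaries $v_i$. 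Verifying factoriality and the corresponding mixing estimates is the technical heart of \cite{ChifanDI-EmbUnivPptyT}; once these are in hand, the construction above yields the desired $\pi : \Gamma \to U(N)$ with the claimed uniform bound on Kazhdan numbers.
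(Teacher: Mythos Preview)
The paper does not prove this theorem; it quotes it from \cite{ChifanDI-EmbUnivPptyT}. Your attempt is not a variant of that argument, and it contains a genuine gap.

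The crucial error is the sentence ``this induces an embedding $L(F_\infty)\hookrightarrow M$.'' A group homomorphism $\rho_0:F_\infty\to U(M)$ sending free generators to an arbitrary generating set of unitaries $\{v_i\}$ gives only a $*$-homomorphism from the group algebra $\bC[F_\infty]$ into $M$; it extends to an embedding of $L(F_\infty)$ only when the $v_i$ are freely independent Haar unitaries. For $M=\R$ this is impossible: $L(F_\infty)$ is non-amenable and hence does not embed into $\R$ at all. So the amalgamated free product $M *_{L(F_\infty)} L(\Gamma)$ is not even defined in general.

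Your proposed fix --- replace $M$ by $M*L(\bb F_\infty)$ and let the $v_i$ be the free generators of the second factor --- does produce a bona fide copy of $L(\bb F_\infty)$, but then $\pi(\Gamma)''$ inside $(M*L(\bb F_\infty))*_{L(\bb F_\infty)}L(\Gamma)$ is just the copy of $L(\Gamma)$: the identification $\pi(e_i)=v_i$ now lands in the $L(\bb F_\infty)$ tensorand, not in $M$, so $M\not\subseteq \pi(\Gamma)''$ and the conclusion $N=\pi(\Gamma)''$ fails. There is no way to repair this within the amalgamated-free-product framework, because you need the \emph{same} unitaries both to generate $M$ and to sit inside $L(\Gamma)$ as honest group elements.

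The actual mechanism in \cite{ChifanDI-EmbUnivPptyT} is entirely different: the group $\Gamma$ is taken to be a property (T) \emph{wreath-like product} (in the sense of Chifan--Ioana--Osin--Sun), i.e.\ an extension $1\to A^{(B)}\to\Gamma\to B\to 1$ with $A$ abelian, and the factor $N$ containing $M$ is built as a generalized Bernoulli-type crossed product in which the abelian base $L(A)^{\otimes B}$ is used to encode $M$. The uniform Kazhdan bound then comes for free because a single $\Gamma$ works for every $M$. No free-product step or mixing hypothesis of the kind you invoke is involved.
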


\begin{proof}[Proof of Proposition \ref{prop:infgenericprop(T)}]
We construct a chain of II$_1$ factors
\[
M_0 \subseteq N_0 \subseteq M_1 \subseteq N_1 \subseteq \dots
\]
inductively as follows.  Let $M_0$ be an infinitely generic II$_1$ factor.  Once $M_i$ is chosen, let $N_i$ be a property (T) factor constructed using Theorem \ref{propTfact}, whence its Kazhdan number is bounded by $K(\Gamma)$.  Once $N_i$ is chosen, let $M_{i+1}$ be an infinitely generic II$_1$ factor containing $N_i$, which exists because infinitely generic II$_1$ factors are embedding universal.

Let $M$ be the inductive limit of the chain.  Then $M$ has uniform local property (T) by construction.  It is also infinitely generic by \cite[Lemma 6.3.16]{AGKE-GeneralizedJung}.
\end{proof}

\begin{thm} \label{thm:mainthm2}
Every infinitely generic II$_1$ factor is uniformly super McDuff.  Consequently, any II$_1$ factor that is elementarily equivalent to an infinitely generic II$_1$ factor is uniformly super McDuff.
\end{thm}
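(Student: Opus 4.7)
The plan is to assemble the theorem from the three ingredients already established in this section, using that the class of uniformly super McDuff factors is closed under elementary equivalence (Theorem \ref{prop: list-equiv-ppties}).

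First, I would invoke Proposition \ref{prop:infgenericprop(T)} to obtain a particular infinitely generic II$_1$ factor $M_0$ with the uniform local property (T). Since $M_0$ is infinitely generic, it is e.c.\ (as noted in Subsection \ref{subsection-e.c.}), so Proposition \ref{uniformpropT} applies and yields that $M_0$ is uniformly super McDuff.

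Next, recall that any two infinitely generic II$_1$ factors are elementarily equivalent (this was pointed out via \cite[Lemma 5.20]{ecfactors} in Subsection \ref{subsection-e.c.}). Thus, if $M$ is an arbitrary infinitely generic II$_1$ factor, then $M \equiv M_0$. By the equivalence of (1) and (2) in Theorem \ref{prop: list-equiv-ppties}, uniform super McDuffness transfers along elementary equivalence, so $M$ is uniformly super McDuff.

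For the consequence, let $N$ be any II$_1$ factor elementarily equivalent to some infinitely generic II$_1$ factor $M$. By the previous paragraph, $M$ is uniformly super McDuff, and $N \equiv M$, so one more application of Theorem \ref{prop: list-equiv-ppties} gives that $N$ is uniformly super McDuff. The main conceptual input is really Proposition \ref{prop:infgenericprop(T)}, whose proof relies on Theorem \ref{propTfact} from \cite{ChifanDI-EmbUnivPptyT}; every other step is a direct appeal to a previously established result, so there is no genuine obstacle beyond citing the pieces in the correct order.
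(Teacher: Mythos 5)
Your proof is correct and follows essentially the same route as the paper's: produce one infinitely generic factor with the uniform local property (T) via Proposition \ref{prop:infgenericprop(T)}, conclude it is uniformly super McDuff via Proposition \ref{uniformpropT}, and then transfer along elementary equivalence using Theorem \ref{prop: list-equiv-ppties} and the fact that all infinitely generic factors are elementarily equivalent.
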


\begin{proof}
As noted in the preliminaries, any two infinitely generic II$_1$ factors are elementarily equivalent.  Since the uniform super McDuff property is preserved by elementary equivalence, it suffices to show that \emph{some} infinitely generic II$_1$ factor is uniformly super McDuff.  By Proposition \ref{prop:infgenericprop(T)}, there exists an infinitely generic II$_1$ factor with uniform local property (T), which must be uniformly super McDuff by Proposition \ref{uniformpropT}.
\end{proof}

Theorem \ref{thm:mainthm2} generalizes Chifan, Drimbe, and Ioana's result, which showed that infinitely generic II$_1$ factors are super McDuff \cite[Theorem 6.4]{ChifanDI-EmbUnivPptyT}.  In fact, it is a strict generalization because there are II$_1$ factors elementarily equivalent to the infinitely generic ones without themselves being infinitely generic (Lemma \ref{infgenfact}).




Theorem \ref{thm:mainthm2} also allows us to connect two open questions discussed in Subsection \ref{subsection-e.c.}: whether every e.c. factor is super McDuff and whether the union of a chain of super McDuff factors is once again super McDuff.

\begin{prop}
If the union of every chain of uniformly super McDuff factors is (uniformly) super McDuff, then every e.c. factor is (uniformly) super McDuff.
\end{prop}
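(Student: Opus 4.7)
Plan: The strategy is a model-theoretic preservation argument. Under the hypothesis, the class of uniformly super McDuff II$_1$ factors becomes $\forall\exists$-axiomatizable, and then any e.c.\ factor is forced into this class by the standard fact that existentially closed structures absorb $\forall\exists$-sentences from their extensions.

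Let $K$ denote the class of uniformly super McDuff II$_1$ factors.  By Theorem~\ref{prop: list-equiv-ppties}, $K$ is closed under elementary equivalence (in particular under ultrapowers), and by the standing hypothesis $K$ is closed under unions of chains.  Invoking the continuous-logic version of the Chang--Łoś--Suszko preservation theorem (closure under ultraproducts together with unions of chains characterizes $\forall\exists$-axiomatizability), one concludes that $K$ is axiomatized by a set $\Sigma$ of $\forall\exists$-sentences, i.e., sentences of the form $\sup_{\bar y}\inf_{\bar x}\varphi(\bar x,\bar y)$ with $\varphi$ quantifier-free.

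Now let $M$ be an e.c.\ II$_1$ factor.  By Theorem~\ref{thm:mainthm2} every infinitely generic factor is uniformly super McDuff, and by embedding universality we may fix an infinitely generic $N \supseteq M$; then $N\models\Sigma$.  For any $\sigma = \sup_{\bar y}\inf_{\bar x}\varphi(\bar x,\bar y)\in\Sigma$ and any tuple $\bar a$ from $M$, existential closedness of $M$ yields
\[
\inf_{\bar x\in M}\varphi(\bar x,\bar a) \;=\; \inf_{\bar x\in N}\varphi(\bar x,\bar a) \;\le\; \sigma^N \;=\; 0,
\]
and taking the supremum over $\bar a\in M$ gives $\sigma^M = 0$.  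Hence $M\models\Sigma$ and $M$ is uniformly super McDuff.  The non-uniform version of the statement (with ``super McDuff'' in place of ``uniformly super McDuff'' throughout) is handled by a parallel argument, adjusted for the fact that the super McDuff property is not \emph{a priori} known to be preserved under elementary equivalence.

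The main obstacle is the continuous-logic form of the Chang--Łoś--Suszko preservation theorem: while closure of $K$ under chains is given and closure under ultrapowers is immediate from Theorem~\ref{prop: list-equiv-ppties}, closure under general ultraproducts of USM factors (with possibly distinct theories) requires a small additional argument using the finitary uniform super McDuff condition.  Once this is in place, the rest of the proof is routine.
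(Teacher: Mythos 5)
Your final step (an e.c.\ structure absorbs $\forall\exists$-sentences from any extension, and an infinitely generic extension is uniformly super McDuff by Theorem \ref{thm:mainthm2}) is fine, but the argument hinges on a premise you have not earned: that the class $K$ of uniformly super McDuff factors is $\forall\exists$-axiomatizable. To invoke a Chang--\L o\'s--Suszko type preservation theorem you need $K$ to be an \emph{elementary} class in the first place, i.e.\ closed under ultraproducts as well as elementary equivalence. Theorem \ref{prop: list-equiv-ppties} only gives closure under elementary equivalence (hence under ultra\emph{powers}); closure under ultraproducts of a family $(M_i)$ with varying theories is a genuinely different matter, because the witness $m=m(n)$ in the uniform super McDuff condition may be unbounded over the family, exactly as with the spectral gap numbers discussed after Proposition \ref{uniformfull}. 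The property is naturally expressed, for each $n$, as an infinite disjunction over $m$ of the conditions $\sigma_{n,m}=0$, which is not first-order, and whether it can be replaced by an actual axiomatization is not known --- indeed the paper explicitly records that the Brown property (equivalent to uniform super McDuffness) is currently only known to be \emph{local}, not axiomatizable. So the step you describe as ``a small additional argument'' is the real obstruction, and the chain hypothesis of the proposition does nothing to supply it. A second, smaller gap: for the parenthetical (non-uniform) reading of the statement, the hypothesis only says that unions of chains of uniformly super McDuff factors are super McDuff, and the class of super McDuff factors is not known to be closed under elementary equivalence (that is Question D), so no preservation-theorem argument applies there at all; ``a parallel argument, adjusted'' does not address this.

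The paper's proof sidesteps axiomatizability entirely by building one explicit chain. Given an e.c.\ factor $P$, embed $P$ into an infinitely generic factor $M$ (uniformly super McDuff by Theorem \ref{thm:mainthm2}); e.c.-ness of $P$ gives an embedding $M\hookrightarrow P^{\mathcal U}$ restricting to the diagonal embedding, and iterating yields a chain
$$P\subseteq M\hookrightarrow P^{\mathcal U}\subseteq M^{\mathcal U}\hookrightarrow P^{2\mathcal U}\subseteq M^{2\mathcal U}\subseteq\cdots$$
whose union $Q$ is simultaneously a union of a chain of uniformly super McDuff factors (each $M^{k\mathcal U}\equiv M$) and an elementary extension of $P$ (the maps $P^{k\mathcal U}\hookrightarrow P^{(k+1)\mathcal U}$ are elementary). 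The hypothesis then makes $Q$ (uniformly) super McDuff, and both properties pass to the elementary substructure $P$. If you want to rescue your route, you would first have to prove that uniform super McDuffness is closed under ultraproducts --- which would be a new result, not a routine lemma.
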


\begin{proof}
    Let $P$ be an e.c. factor and embed $P$ into an infinitely generic factor $M$.  By Theorem \ref{thm:mainthm2}, $M$ is uniformly super McDuff.  Fix an embedding $M\hookrightarrow P^\u$ that restricts to the diagonal embedding $P\hookrightarrow P^\u$.  We then have the chain
    $$P\subseteq M\hookrightarrow P^\u \subseteq M^\u \hookrightarrow P^{2\u}\subseteq M^{2\u}\cdots$$
Note that all the embeddings $P^{k\u}\hookrightarrow P^{(k+1)\u}$ are elementary (being iterated ultrapowers of the diagonal embedding).  Consequently, the limit $Q$ of the chain is an elementary extension of $P$.  Since $M$ is uniformly super McDuff and each $M^{k\u}$ is elementarily equivalent to $M$, they are also uniformly super McDuff.  By hypothesis, $Q$ is (uniformly) super McDuff.  Since (uniform) super McDuffness passes to elementary substructures, the result follows.
\end{proof}

\subsection{Strongly McDuff implies uniformly super McDuff} \label{subsec:stronglyMcDuff}

The following result was suggested to us by Adrian Ioana. 

\begin{thm} \label{thm:RUSM}
Let $M$ be a full tracial factor.  Then $M \otimes \R$ has the FC limit spectral gap property and hence is uniformly super McDuff.
\end{thm}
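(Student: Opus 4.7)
The plan is to verify the hypotheses of Proposition \ref{sgapcriterion}: $M \otimes \R$ is McDuff since it absorbs $\R$ tensorially, so the task reduces to showing the FC limit spectral gap property. The core idea, following the suggestion of Ioana recorded in the acknowledgements, is to combine the spectral gap afforded by the fullness of $M$ with a uniform spectral gap for matrix subfactors of $\R$ provided by Hastings' quantum expander construction.

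Write $\R = \bigotimes_{n=1}^\infty M_2(\bC)$ and, for each $k \in \bN$, set $N_k = \bigotimes_{n=1}^k M_2(\bC) \cong M_{2^k}(\bC)$ and $Q_k = \bigotimes_{n > k} M_2(\bC) \cong \R$, so that $\R = N_k \otimes Q_k$. Given a finite $F \subseteq M \otimes \R$ and $\epsilon > 0$, choose $k$ large enough that $F \subseteq_\epsilon M \otimes N_k$, and set $A = M \otimes N_k$. Since $M$ and $N_k$ are factors,
\[
A' \cap (M \otimes \R) = (M' \cap M) \otimes (N_k' \cap N_k) \otimes Q_k = \bC \otimes \bC \otimes Q_k,
\]
which is a factor isomorphic to $\R$. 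It then remains to bound $\operatorname{SG}(A, M \otimes \R)$ by a constant $s$ depending only on $M$.

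For this, for $\xi \in L^2(M \otimes \R)$, I would use the orthogonal decomposition
\[
\xi - E_{A' \cap (M \otimes \R)}(\xi) = \bigl(\xi - E_{\bC \otimes \R}(\xi)\bigr) + \bigl(E_{\bC \otimes \R}(\xi) - E_{\bC \otimes \bC \otimes Q_k}(\xi)\bigr).
\]
Fullness of $M$ supplies $F_M \subseteq (M)_1$ finite and $c_M > 0$ with $\norm{\alpha - \tau(\alpha)\cdot 1}_2^2 \leq c_M \sum_{x \in F_M} \norm{[x,\alpha]}_2^2$ for $\alpha \in L^2(M)$; expanding $\xi$ along an orthonormal basis of $L^2(\R)$ transfers this to
\[
\norm{\xi - E_{\bC \otimes \R}(\xi)}_2^2 \leq c_M \sum_{x \in F_M} \norm{[x \otimes 1, \xi]}_2^2.
\]
Hastings' theorem provides absolute constants $D \in \bN$ and $\lambda \in (0,1)$, together with unitaries $u_1^{(k)}, \ldots, u_D^{(k)} \in N_k$, such that the channel $\Phi_k = \tfrac{1}{D}\sum_i \Ad u_i^{(k)}$ contracts the traceless part of $L^2(N_k)$ by $\lambda$. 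The Cauchy--Schwarz bound $\norm{\eta - \Phi_k(\eta)}_2^2 \leq \tfrac{1}{D}\sum_i \norm{[u_i^{(k)}, \eta]}_2^2$, applied to the $N_k$-bimodule $L^2(\R) = L^2(N_k \otimes Q_k)$, yields a uniform constant $C = C(D,\lambda)$ with
\[
\norm{\eta - E_{Q_k}(\eta)}_2^2 \leq C \sum_{i=1}^D \norm{[u_i^{(k)}, \eta]}_2^2 \qquad (\eta \in L^2(\R)).
\]
Taking $\eta = E_{\bC \otimes \R}(\xi)$ and using that $1 \otimes u_i^{(k)}$ commutes with $E_{\bC \otimes \R}$ (which is an $L^2$-contraction) replaces the commutator with $\eta$ by the commutator with $\xi$. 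Adding the two estimates gives the FC limit spectral gap property with Kazhdan set $(F_M \otimes 1) \sqcup (1 \otimes \{u_i^{(k)}\})$ of size $|F_M| + D$ and constant $\max(c_M, C)$, both independent of $k$.

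The main obstacle is the uniform-in-$k$ spectral gap of $N_k \subseteq \R$, which is precisely Hastings' quantum expander result: $O(1)$ unitaries in $M_{2^k}(\bC)$ whose conjugation action is a strict contraction (with constants independent of $k$) on the traceless part of $L^2(M_{2^k}(\bC))$. Promoting this bound from the standard bimodule to $L^2(N_k \otimes Q_k)$ is routine, and the remainder is bookkeeping with conditional expectations and an invocation of Proposition \ref{sgapcriterion}.
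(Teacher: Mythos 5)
Your proposal is correct and follows essentially the same route as the paper: you take $A = M \otimes M_{2^k}(\bC)$ with commutant $\bC \otimes \bC \otimes Q_k \cong \R$, and bound $\operatorname{SG}(A, M\otimes\R)$ uniformly in $k$ by combining the fullness of $M$ with Hastings' quantum expander bound for matrix algebras, exactly as in the paper's proof via Proposition \ref{sgapcriterion}. The two estimates you inline (transferring the gap of a full factor across a tensor factor, and the orthogonal decomposition with the conditional-expectation trick on commutators) are precisely the content of the paper's Lemmas \ref{lem:tensorproduct} and \ref{lem:tensorproduct2}.
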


In particular, by taking $M = \bC$, we obtain a direct argument that $\R$ is uniformly super McDuff (which was already known because $\R$ has the Brown property).  When $M$ is a full II$_1$ factor, $M\otimes \R$ is often called a \textbf{strongly McDuff} II$_1$ factor.  Strongly McDuff II$_1$ factors were shown to be super McDuff in \cite[Proposition 6.1.5]{AGKE-GeneralizedJung} and Theorem \ref{thm:RUSM} generalizes this latter result to obtain that strongly McDuff factors are uniformly super McDuff.  As shown in \cite[Proposition 6.3.2]{AGKE-GeneralizedJung}, e.c. factors are never strongly McDuff, whence Theorem \ref{thm:RUSM} represents a completely different class of examples of uniformly super McDuff factors from the infinitely generic factors above.

The main ingredient in the proof of Theorem \ref{thm:RUSM} is the following result of Hastings on quantum expanders \cite{Hastings2007}.  Here we only state a special case of Hastings' result; in Hastings' terminology, this is the Hermitian case with $D = 4$.

\begin{thm}[Hastings {\cite{Hastings2007}}]
Fix an even $d \geq 4$.  There exist unitaries $U_{k,1}$, \dots, $U_{k,d}$ in $M_k(\bC)$ such that the following properties hold:  Let $\Phi_k: M_k(\bC) \to M_k(\bC)$ be defined by
\[
\Phi_k(X) = \frac{1}{d} \sum_{j=1}^d U_{k,j}XU_{k,j}^*.
\]
Let $\lambda_2(\Phi_k)$ be the second largest eigenvalue of $\Phi_k$, which is the largest eigenvalue of $\Phi_k$ on the orthogonal complement of $\bC 1$.  Then
\[
\lim_{k \to \infty} \lambda_2(\Phi_k) = \frac{2\sqrt{d-1}}{d}.
\]
\end{thm}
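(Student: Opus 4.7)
The plan is to construct the unitaries $U_{k,j}$ probabilistically: take $U_{k,1}, \ldots, U_{k,d}$ to be independent Haar-distributed random unitaries in $U(k)$ and show that almost surely (and hence for some deterministic choice) $\lambda_2(\Phi_k) \to 2\sqrt{d-1}/d$ as $k \to \infty$.

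First I would reinterpret $\lambda_2(\Phi_k)$ as an operator norm.  Each $\Ad(U_{k,j})$ preserves the normalized trace, so $\bC \cdot I \subseteq M_k(\bC)$ is a one-dimensional eigenspace of $\Phi_k$ with eigenvalue $1$ under the Hilbert--Schmidt inner product, and the orthogonal complement $V_k$ (the trace-zero matrices) is $\Phi_k$-invariant.  Under the natural identification $M_k(\bC) \cong \bC^k \otimes \overline{\bC^k}$, the map $\Ad(U_{k,j})$ becomes $U_{k,j} \otimes \overline{U_{k,j}}$, hence $\Phi_k = \tfrac{1}{d} \sum_j U_{k,j} \otimes \overline{U_{k,j}}$ and $\lambda_2(\Phi_k) = \|\Phi_k|_{V_k}\|$.

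Next, I would invoke the asymptotic strong convergence of independent Haar random unitaries due to Collins--Male (building on Haagerup--Thorbjornsen), suitably extended to polynomials in both $U_{k,j}$ and $\overline{U_{k,j}}$.  The conclusion is that almost surely $\|\Phi_k|_{V_k}\|$ converges to the operator norm of $\Phi(\xi) = \tfrac{1}{d} \sum_j u_j \xi u_j^*$ on $L^2(L(\bb{F}_d)) \ominus \bC 1$, where $u_1, \ldots, u_d$ are $\ast$-free Haar unitaries in the free group factor $L(\bb{F}_d)$.  Computing this limiting norm reduces, by expanding $\tau((\Phi^*\Phi)^p)$, to counting non-backtracking closed walks of length $2p$ on the Cayley graph of $\bb{F}_d$, i.e.\ to the moments of the Kesten--McKay measure on $[-2\sqrt{d-1}/d, \, 2\sqrt{d-1}/d]$, giving $\|\Phi\| = 2\sqrt{d-1}/d$.

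The main obstacle is the second step: Voiculescu's asymptotic freeness for independent Haar unitaries only provides convergence in $\ast$-distribution, whereas norm convergence of the specific polynomial $\tfrac{1}{d} \sum_j U_{k,j} \otimes \overline{U_{k,j}}$ on the trace-zero subspace requires \emph{strong} convergence, and in fact strong convergence of polynomials in $U_{k,j}$ and $\overline{U_{k,j}}$ simultaneously.  This is exactly where the Collins--Male machinery (or, more recently, Bordenave--Collins on strong convergence of tensor products) is indispensable; once that input is in place, the remaining norm computation on $L(\bb{F}_d)$ is classical.  As an alternative to the free-probabilistic route, one may follow Hastings' original moment-method argument, computing $\bb{E}\,\tr\bigl((\Phi_k|_{V_k})^{2p}\bigr)$ directly via Weingarten/random-matrix integrals and concluding by the same Kesten--McKay asymptotics.
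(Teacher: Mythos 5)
The paper quotes this result directly from Hastings \cite{Hastings2007} without reproducing the proof, so there is no in-paper argument to compare your sketch against; I will assess it on its own merits.  Your strong-convergence route --- random Haar unitaries, a Collins--Male/Bordenave--Collins-type strong convergence theorem for polynomials in both $U_j$ and $\overline{U_j}$, then the Kesten/Akemann--Ostrand norm computation in the free group factor --- is a well-known modern alternative to Hastings' original argument, which, as you correctly note at the end, proceeds by direct moment-and-Weingarten estimates.

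One point needs repair.  If all $d$ of $U_{k,1},\dots,U_{k,d}$ are taken independent Haar, then $\Phi_k$ is not self-adjoint, so ``the second largest eigenvalue of $\Phi_k$'' is not well-defined --- and the paper explicitly says it is invoking the \emph{Hermitian} case of Hastings' theorem (this is exactly why $d$ is assumed even).  You should instead take $d/2$ i.i.d.\ Haar unitaries $V_1,\dots,V_{d/2}$ and set $\{U_{k,j}\}_{j=1}^d=\{V_1,V_1^*,\dots,V_{d/2},V_{d/2}^*\}$, which makes $\Phi_k$ self-adjoint.  The limiting free-probability computation is then over $\mathbb{F}_{d/2}$ rather than $\mathbb{F}_d$: Fell absorption gives $\lambda\otimes\overline\lambda\cong\lambda^{\oplus\infty}$, which for a nonamenable group has no nonzero invariant vector, so the limiting norm on the orthogonal complement of $\mathbb{C}1$ is $\frac{1}{d}\bigl\|\sum_{i=1}^{d/2}\bigl(\lambda(g_i)+\lambda(g_i^{-1})\bigr)\bigr\| = \frac{2\sqrt{d-1}}{d}$ by Kesten's theorem for the $d$-regular tree.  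With that adjustment your outline is sound, and the non-backtracking-walk heuristic you invoke for the moment expansion is indeed the combinatorial shadow of this Kesten computation.
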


For us, the importance of Hastings' result is the following:

\begin{cor}
The spectral gap numbers for $M_k(\bC)$ satisfy $\limsup_{k \to \infty} \operatorname{SG}(M_k(\bC)) \leq 8$. In particular, $\sup_k \operatorname{SG}(M_k(\bC)) < \infty$.
\end{cor}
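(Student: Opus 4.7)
The plan is to convert Hastings' spectral gap estimate for the quantum channel $\Phi_k$ into a spectral gap estimate for $M_k(\bC)$ itself by using the Hastings unitaries as the witnessing finite set. Fix any even $d \geq 4$ (we will see that $d = 4$ already suffices), and let $U_{k,1},\ldots,U_{k,d}$ be the unitaries supplied by Hastings' theorem. The ``Hermitian case'' ensures that the set $\{U_{k,j}\}$ is closed under taking adjoints, so $\Phi_k$ is self-adjoint on $L^2(M_k(\bC))$ with respect to the trace inner product; in particular, the $\Phi_k$-invariant orthogonal complement $(\bC 1)^\perp$ carries the operator inequality $\Phi_k \leq \lambda_2(\Phi_k)\, I$. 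We then take $F_k := \{U_{k,1},\ldots,U_{k,d}\}$, a candidate set of size $d$, as our witness for the spectral gap of $M_k(\bC)$.

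The central computation is the identity
$$\sum_{j=1}^d \bigl\|[U_{k,j}, \xi]\bigr\|_2^2 \;=\; 2d\,\langle (I - \Phi_k)\xi, \xi\rangle, \qquad \xi \in L^2(M_k(\bC)),$$
obtained by expanding $\|U_j\xi - \xi U_j\|_2^2 = 2\|\xi\|_2^2 - 2\operatorname{Re}\tau(\xi^* U_j^* \xi U_j)$, summing in $j$, and using cyclicity of the trace together with the self-adjointness of $\Phi_k$ to collect the cross terms into $d\,\langle\Phi_k(\xi),\xi\rangle$.

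Applying this identity to $\xi_0 := \xi - \tau(\xi)\cdot 1$ (and noting that $[U_{k,j},\xi_0] = [U_{k,j},\xi]$ since scalars commute with everything), and invoking the inequality $\langle (I - \Phi_k)\xi_0, \xi_0\rangle \geq (1 - \lambda_2(\Phi_k))\|\xi_0\|_2^2$, we arrive at
$$\|\xi - \tau(\xi)\cdot 1\|_2^2 \;\leq\; \frac{1}{2d\,(1 - \lambda_2(\Phi_k))}\sum_{j=1}^d \bigl\|[U_{k,j}, \xi]\bigr\|_2^2.$$
This shows $\operatorname{SG}(M_k(\bC)) \leq \max\!\bigl(d,\, (2d(1 - \lambda_2(\Phi_k)))^{-1}\bigr)$.

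Finally, Hastings' theorem gives $\lambda_2(\Phi_k) \to 2\sqrt{d-1}/d$ as $k \to \infty$, so the right-hand side tends to $\max\!\bigl(d,\,(2(d - 2\sqrt{d-1}))^{-1}\bigr)$. For $d = 4$ the second entry is $(8 - 4\sqrt{3})^{-1} < 1$, so the $\limsup$ is at most $4$, comfortably under the stated bound of $8$. The only subtle step in the plan is deriving the commutator identity cleanly; once that is in hand, the bound is obtained by straightforward bookkeeping with Hastings' asymptotic, and one could equally well take any even $d \in \{4,6,8\}$ and still stay within the claimed bound.
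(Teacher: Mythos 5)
Your proof is correct, and it takes a genuinely different and sharper route than the paper's. The paper first writes $\norm{\xi - \tau_k(\xi)1}_2 \leq (1-\lambda_2(\Phi_k))^{-1}\norm{\xi - \Phi_k(\xi)}_2$, then estimates $\norm{\xi - \Phi_k(\xi)}_2 \leq \frac{1}{d}\sum_j \norm{[U_{k,j},\xi]}_2$ by the triangle inequality and squares via Cauchy--Schwarz; composing these gives $\operatorname{SG}(M_k(\bC)) \leq \max\bigl(d, \tfrac{1}{d(1-\lambda_2(\Phi_k))^2}\bigr)$, and the quadratic loss in the gap is what forces the choice $d = 8$. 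You instead prove the exact Dirichlet-form identity
\[
\sum_{j=1}^d \norm{[U_{k,j},\xi]}_2^2 = 2d\,\ip{(I-\Phi_k)\xi}{\xi},
\]
which is legitimate precisely because the Hermitian-case Hastings unitaries are closed under adjoints: expanding $\norm{U_j\xi - \xi U_j}_2^2$ produces the cross term $2\operatorname{Re}\tau(U_j^*\xi^*U_j\xi)$, and the adjoint-closure of the family makes $\sum_j \tau(U_j^*\xi^*U_j\xi) = d\,\ip{\Phi_k(\xi)}{\xi}$ real, so the real part may be dropped. Applying the spectral gap of the self-adjoint $\Phi_k$ directly to this quadratic form then gives the linear bound $\operatorname{SG}(M_k(\bC)) \leq \max\bigl(d, \tfrac{1}{2d(1-\lambda_2(\Phi_k))}\bigr)$, so $d=4$ already works and your $\limsup$ is $4$, strictly better than the stated $8$. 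Note that the paper's step $\norm{(I-\Phi_k)^{-1}|_{(\bC 1)^\perp}} \leq (1-\lambda_2(\Phi_k))^{-1}$ also implicitly relies on $\Phi_k$ being self-adjoint, so your argument does not secretly assume more than the paper does; it just uses that hypothesis more efficiently, in the form of the quadratic-form inequality $\ip{(I-\Phi_k)\xi_0}{\xi_0} \geq (1-\lambda_2(\Phi_k))\norm{\xi_0}_2^2$ on $(\bC 1)^\perp$.
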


\begin{proof}
Fix an even $d \geq 4$ (in the end, we will use $d = 8$), and let $\Phi_k$ be as in the previous theorem.  By the definition of $\lambda_2(\Phi_k)$, we have
\[
\norm{X - \tr_k(X)1}_2 \leq \frac{1}{1-\lambda_2(\Phi_k)} \norm{X - \Phi_k(X)}_2.
\]
We also have
\[
\norm{X - \Phi_k(X)}_2
= \norm{X - \sum_{j=1}^d U_{k,j}XU_{k,j}^*}_2
\leq \frac{1}{d} \sum_{j=1}^d \norm{X - U_{k,j}XU_{k,j}^*}_2
= \frac{1}{d} \sum_{j=1}^d \norm{U_{k,j} X - X U_{k,j}}_2.
\]
Thus, by the Cauchy-Schwarz inequality, we have
\begin{align*}
\norm{X - \tr_k(X)1}_2^2 &\leq \frac{1}{(1 - \lambda_2(\Phi_k))^2} \left( \frac{1}{d} \sum_{j=1}^d \norm{U_{k,j} X - X U_{k,j}}_2 \right)^2 \\
&\leq \frac{1}{(1 - \lambda_2(\Phi_k))^2} \frac{1}{d} \sum_{j=1}^d \norm{U_{k,j}X - XU_{k,j}}_2^2.
\end{align*}
Therefore,
\[
\limsup_{k \to \infty} \operatorname{SG}(M_k(\bC)) \leq \limsup_{k \to \infty} \max\left(d, \frac{1}{4(1 - \lambda_2(\Phi_k))^2} \right) = \max\left(d, \frac{1}{d(1 - 2 \sqrt{d-1}/d)^2} \right).
\]
The second term inside the maximum equals $d(d + 2 \sqrt{d-1})/(d-2)^2$, which is less than $d$ for $d \geq 8$, and hence by taking $d = 8$ we get the result of the corollary.
\end{proof}

We will also use the following elementary observations about spectral gap numbers and tensor products.  These results are known to experts, but we include the proof here for completeness (and because we pay attention to the spectral gap numbers).

\begin{lem} \label{lem:tensorproduct}
Let $M$ be a full tracial factor.  Then for any tracial von Neumann algebra $N$, $M \otimes \bC$ has spectral gap inside $M \otimes N$.  In fact, if $F$ and $C$ witness spectral gap for $M$ inside $M$, then $F \otimes 1$ and $C$ witness spectral gap for $M \otimes \bC$ inside $M \otimes N$.  In particular, $\operatorname{SG}(M,M\otimes N) \leq \operatorname{SG}(M,M)$.
\end{lem}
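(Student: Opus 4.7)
The plan is to use the standard tensor decomposition $L^2(M \otimes N) \cong L^2(M) \otimes L^2(N)$ and reduce the desired inequality, fiber by fiber, to the spectral gap hypothesis already available for $M \subseteq M$. First note that since $M$ is a factor, $(M \otimes \bC)' \cap (M \otimes N) = (M' \cap M) \otimes N = \bC \otimes N$, and the trace-preserving conditional expectation onto this subalgebra is $E = \tau_M \otimes \id_N$. So the quantity we want to control is $\|\xi - (\tau_M \otimes \id_N)(\xi)\|_2^2$.

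Next I would fix an orthonormal basis $\{e_\alpha\}$ of $L^2(N)$ and expand any $\xi \in L^2(M \otimes N)$ uniquely as $\xi = \sum_\alpha \xi_\alpha \otimes e_\alpha$ with $\xi_\alpha \in L^2(M)$ and $\sum_\alpha \|\xi_\alpha\|_2^2 = \|\xi\|_2^2$. Since left and right multiplication by $x \otimes 1$ act only on the first tensor factor (they commute with the $N$-actions defining right/left multiplication on the second factor), one gets
\[
(x \otimes 1)\xi - \xi(x \otimes 1) = \sum_\alpha (x \xi_\alpha - \xi_\alpha x) \otimes e_\alpha,
\]
and hence by Pythagoras
\[
\|(x \otimes 1)\xi - \xi(x \otimes 1)\|_2^2 = \sum_\alpha \|x \xi_\alpha - \xi_\alpha x\|_2^2.
\]
Similarly, $(\tau_M \otimes \id_N)(\xi_\alpha \otimes e_\alpha) = \tau_M(\xi_\alpha)\, 1 \otimes e_\alpha$, so
\[
\|\xi - E(\xi)\|_2^2 = \sum_\alpha \|\xi_\alpha - \tau_M(\xi_\alpha)\cdot 1\|_2^2.
\]

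Now I apply the hypothesis: for each $\alpha$, since $F$ and $C$ witness spectral gap for $M \subseteq M$,
\[
\|\xi_\alpha - \tau_M(\xi_\alpha)\cdot 1\|_2^2 \leq C \sum_{x \in F} \|x \xi_\alpha - \xi_\alpha x\|_2^2.
\]
Summing over $\alpha$ and interchanging the (finite) sum over $F$ with the sum over $\alpha$ yields
\[
\|\xi - E(\xi)\|_2^2 \leq C \sum_{x \in F} \|(x \otimes 1)\xi - \xi(x \otimes 1)\|_2^2,
\]
which is exactly the spectral gap inequality for $M \otimes \bC \subseteq M \otimes N$ with witnessing set $F \otimes 1$ (of the same cardinality as $F$) and constant $C$. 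The final assertion $\operatorname{SG}(M, M \otimes N) \leq \operatorname{SG}(M,M)$ is then immediate from the definition of the spectral gap number.

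I do not expect any serious obstacle here: the only mildly subtle points are identifying the relative commutant using that $M$ is a factor, and verifying that left/right multiplication by $x \in M$ acts diagonally in the chosen basis of $L^2(N)$, which is just the definition of the tensor-product bimodule structure. Everything else is bookkeeping in Hilbert space.
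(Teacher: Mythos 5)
Your proof is correct and takes essentially the same route as the paper's: both rest on the observation that the commutator maps with $F\otimes 1$ and the projection $\id - E_{\bC\otimes N} = (\id_{L^2(M)}-\tau_M)\otimes \id_{L^2(N)}$ act only on the $L^2(M)$ tensor factor, so the spectral gap inequality for $M\subseteq M$ amplifies across $L^2(N)$. The paper phrases this by tensoring the operator inequality $S^*S\le C\,T^*T$ with $\id_{L^2(N)}$, whereas you expand $\xi$ in an orthonormal basis of $L^2(N)$ and apply the inequality fiberwise; this is the same argument written in coordinates.
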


\begin{proof}
Let $S$ be the operator $\id - \tau$ on $L^2(M)$ and let $T:L^2(M) \to L^2(M)^F$ be the linear map given by $T(\xi)= (x\xi - \xi x)_{x \in F}$.  Then the spectral gap property of $M\subseteq M$ can be expressed by
\[
\norm{S\xi}^2 \leq C \norm{T\xi}^2 \text{ for } \xi \in L^2(M),
\]
or equivalently, $S^*S \leq C \cdot T^*T$.  Note that $L^2(M \otimes N) \cong L^2(M) \otimes L^2(N)$ and $E_{\bC \otimes N} = \tau_M \otimes \id_{L^2(N)}$ as an operator on $L^2(M) \otimes L^2(N)$.  Since $S^*S \leq C \cdot T^*T$, we have
\[
(S \otimes \id_{L^2(N)})^*(S \otimes \id_{L^2(N)}) \leq C \cdot (T \otimes \id_{L^2(N)})^*(T \otimes \id_{L^2(N)}),
\]
which translates to
\[
\norm{\xi - E_{\bC \otimes N}(\xi)}_2^2 \leq C \sum_{x \in F} \norm{(x \otimes 1) \xi - \xi (x \otimes 1)}_2^2,
\]
yielding the desired result.
\end{proof}

\begin{lem} \label{lem:tensorproduct2}
Let $M_1$ and $M_2$ be full tracial factors.  Then $M_1 \otimes M_2$ is also full.  More precisely, for $j=1,2$, let $F_j$ and $C_j$ witness the spectral gap of $M_j\subseteq M_j$.  Then $(F_1 \otimes 1) \cup (1 \otimes F_2)$ and $\max(C_1,C_2)$ witness spectral gap for $M_1 \otimes M_2$.  Hence, $\operatorname{SG}(M_1 \otimes M_2) \leq \operatorname{SG}(M_1) + \operatorname{SG}(M_2)$.
\end{lem}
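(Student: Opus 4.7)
The plan is to prove spectral gap of $M_1 \otimes M_2$ by an orthogonal decomposition using the conditional expectation $E := E_{\bC \otimes M_2} : M_1 \otimes M_2 \to \bC \otimes M_2$, which on $L^2$ is simply orthogonal projection onto $L^2(\bC \otimes M_2) \cong L^2(M_2)$. For any $\xi \in L^2(M_1 \otimes M_2)$, write
\[
\xi - \tau(\xi) \cdot 1 = (\xi - E(\xi)) + (E(\xi) - \tau(\xi) \cdot 1).
\]
The two summands are orthogonal (the second lies in $L^2(\bC \otimes M_2)$, the first in its orthocomplement), so by Pythagoras
\[
\norm{\xi - \tau(\xi) \cdot 1}_2^2 = \norm{\xi - E(\xi)}_2^2 + \norm{E(\xi) - \tau(\xi) \cdot 1}_2^2.
\]

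First I would control the part $\xi - E(\xi)$ by applying Lemma \ref{lem:tensorproduct} (with the full factor $M_1$ tensored with $N := M_2$), which yields
\[
\norm{\xi - E(\xi)}_2^2 \leq C_1 \sum_{x \in F_1} \norm{(x \otimes 1)\xi - \xi(x \otimes 1)}_2^2.
\]
Next I would handle $E(\xi) - \tau(\xi) \cdot 1$: identifying $\bC \otimes M_2$ with $M_2$, the vector $E(\xi)$ lies in $L^2(M_2)$, so the spectral gap hypothesis for $M_2 \subseteq M_2$ gives
\[
\norm{E(\xi) - \tau(\xi) \cdot 1}_2^2 \leq C_2 \sum_{y \in F_2} \norm{y E(\xi) - E(\xi) y}_2^2.
\]

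The key step is to replace the commutator of $E(\xi)$ with one involving $\xi$ itself. Since $E$ is a conditional expectation onto an algebra containing $1 \otimes M_2$, it is $(1 \otimes M_2)$-bimodular, so
\[
(1\otimes y) E(\xi) - E(\xi) (1 \otimes y) = E\bigl((1 \otimes y)\xi - \xi (1 \otimes y)\bigr),
\]
and because $E$ is an $L^2$-contraction,
\[
\norm{yE(\xi) - E(\xi)y}_2 \leq \norm{(1 \otimes y)\xi - \xi(1 \otimes y)}_2.
\]
Combining these estimates gives
\[
\norm{\xi - \tau(\xi) \cdot 1}_2^2 \leq \max(C_1,C_2)\!\left(\sum_{x \in F_1}\!\norm{(x \otimes 1)\xi - \xi(x \otimes 1)}_2^2 + \sum_{y \in F_2}\!\norm{(1 \otimes y)\xi - \xi(1 \otimes y)}_2^2\right)\!,
\]
which is exactly the spectral gap inequality with finite set $(F_1 \otimes 1) \cup (1 \otimes F_2)$ and constant $\max(C_1,C_2)$. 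For the inequality on spectral gap numbers, use $C_1 + C_2 \geq \max(C_1,C_2)$ together with $|(F_1 \otimes 1) \cup (1 \otimes F_2)| \leq |F_1| + |F_2| \leq C_1 + C_2$, so that $C_1 + C_2$ is itself a valid spectral gap number for $M_1 \otimes M_2$; taking infima over $F_1$, $C_1$ and $F_2$, $C_2$ yields $\operatorname{SG}(M_1 \otimes M_2) \leq \operatorname{SG}(M_1) + \operatorname{SG}(M_2)$.

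The argument is essentially routine once the orthogonal decomposition is set up; the only subtle point, and the one I would be most careful about, is verifying that the conditional expectation $E$ is bimodular over $1 \otimes M_2$ (so that commutators with elements of $1 \otimes M_2$ can be pulled through $E$) and that it is an $L^2$-contraction, so that the commutator bound survives the projection.
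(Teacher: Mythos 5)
Your proof is correct and follows essentially the same route as the paper: decompose $\xi - \tau(\xi)1$ orthogonally via $E_{\bC\otimes M_2}$, bound the first summand by Lemma \ref{lem:tensorproduct}, bound the second by the spectral gap of $M_2$ together with the $(1\otimes M_2)$-bimodularity and $L^2$-contractivity of $E_{\bC\otimes M_2}$, and combine. You also correctly fill in the small bookkeeping step (using $C_1+C_2$ as a spectral gap number to get $\operatorname{SG}(M_1\otimes M_2)\le\operatorname{SG}(M_1)+\operatorname{SG}(M_2)$) that the paper leaves implicit.
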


\begin{proof}
Fix $\xi \in L^2(M_1 \otimes M_2)$ and set $1 \otimes \eta = E_{\bC \otimes M_2}(\xi)$.  By the previous lemma,
\[
\norm{\xi - 1 \otimes \eta}_2^2 \leq C_1 \sum_{x \in F_1} \norm{(x \otimes 1) \xi - \xi (x \otimes 1)}_2^2.
\]
By the spectral gap of $M_2\subseteq M_2$,
\begin{align*}
\norm{\eta - \tau_N(\eta)}_2^2 &\leq C_2 \sum_{y \in F_2} \norm{y \eta - \eta y}_2^2 = C_2 \sum_{y \in F_2} \norm{(1 \otimes y)(1 \otimes \eta) - (1 \otimes \eta)(1 \otimes y)}_2^2 \\
&= C_2 \sum_{y \in F_2} \norm{(1 \otimes y)E_{1 \otimes M_2}(\xi) - E_{\bC \otimes M_2}(\xi)(1 \otimes y)}_2^2 \\
&= C_2 \sum_{y \in F_2} \norm{E_{\bC \otimes M_2}((1 \otimes y)\xi) - E_{\bC \otimes M_2}(\xi(1 \otimes y))}_2^2 \\
&\leq C_2 \sum_{y \in F_2} \norm{(1 \otimes y)\xi - \xi(1 \otimes y)}_2^2.
\end{align*}
Now $\xi - 1 \otimes \eta$ is orthogonal to $\bC \otimes M_2$ and in particular to $1 \otimes \eta - \tau_{M_2}(\eta)$, hence
\begin{align*}
\norm{\xi - \tau_{M_1 \otimes M_2}(\xi)}_2^2 &= \norm{\xi - E_{\bC \otimes M_2}(\xi)}_2^2 + \norm{\eta - \tau_{M_2}(\eta)}_2^2 \\
&\leq C_1 \sum_{x \in F_1} \norm{(x \otimes 1) \xi - \xi (x \otimes 1)}_2^2 + C_2 \sum_{y \in F_2} \norm{(1 \otimes y)\xi - \xi(1 \otimes y)}_2^2 \\
&\leq \max(C_1,C_2) \left( \sum_{x \in F_1} \norm{(x \otimes 1) \xi - \xi (x \otimes 1)}_2^2 + \sum_{y \in F_2} \norm{(1 \otimes y)\xi - \xi(1 \otimes y)}_2^2 \right).
\end{align*}

This concludes the proof of the Lemma. 
\end{proof}

We are now ready to prove Theorem \ref{thm:RUSM}.

\begin{proof}[Proof of Theorem \ref{thm:RUSM}]
Let $F \subseteq M \otimes \R$ be a finite subset and let $\epsilon > 0$.  Set $s = \operatorname{SG}(M) + \sup_k \operatorname{SG}(M_k(\bC))$.  Then there exists some tensor decomposition $\R = M_k(\bC) \otimes \tilde{\R}$ such that $F$ is contained in the $\epsilon$-neighborhood of $M \otimes M_k(\bC) \otimes \bC$.  Note also that
\[
(M \otimes M_k(\bC) \otimes \bC)' \cap (M \otimes \R) = \bC \otimes \bC \otimes \tilde{\R},
\]
which is a factor.  By Lemmas \ref{lem:tensorproduct} and \ref{lem:tensorproduct2},
\[
\operatorname{SG}(M \otimes M_k(\bC) \otimes \bC,M \otimes \R) \leq \operatorname{SG}(M \otimes M_k(\bC)) \leq \operatorname{SG}(M) + \operatorname{SG}(M_k(\bC)) \leq s.
\]
Therefore, $M \otimes \R$ has the FC limit spectral gap property, as desired.
\end{proof}

In \cite[Proposition 6.1.7]{AGKE-GeneralizedJung}, it was shown that an infinite tensor product of full II$_1$ factors is super McDuff.  It is natural to wonder whether or not there is a uniform version of this result.  The proof of Theorem \ref{thm:RUSM} immediately yields the following:

\begin{prop}\label{uniformfull}
Suppose that $(M_k)$ is a sequence of full II$_1$ factors for which $$\sup_k \operatorname{SG}(M_1\otimes\cdots\otimes M_k)<\infty.$$  Then $\bigotimes_k M_k$ is uniformly super McDuff.
\end{prop}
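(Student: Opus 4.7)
The plan is to deduce Proposition~\ref{uniformfull} from Proposition~\ref{sgapcriterion} by verifying that $M := \bigotimes_k M_k$ is McDuff and has the FC limit spectral gap property, following essentially the same template as the proof of Theorem~\ref{thm:RUSM}. That $M$ is McDuff (in fact super McDuff) is immediate from \cite[Proposition 6.1.7]{AGKE-GeneralizedJung}, so the substance of the argument is in producing the FC limit spectral gap constant.

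Let $s := \sup_k \operatorname{SG}(M_1 \otimes \cdots \otimes M_k) < \infty$ by hypothesis; this will be the constant in the definition of the FC limit spectral gap property. Given a finite $F \subseteq M$ and $\epsilon > 0$, I would exploit the fact that $\bigcup_k (M_1 \otimes \cdots \otimes M_k)$ is SOT-dense, and hence $\|\cdot\|_2$-dense on bounded sets, in $M$. Consequently, I can choose $k$ large enough so that, after identifying $A_k := M_1 \otimes \cdots \otimes M_k$ with the corresponding subfactor $M_1 \otimes \cdots \otimes M_k \otimes 1 \otimes 1 \otimes \cdots$ of $M$, we have $F \subseteq_\epsilon A_k$. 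This $A_k$ will serve as the witnessing subalgebra.

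Two properties of $A_k$ remain to be checked. First, $A_k' \cap M$ is the tail tensor product $\bigotimes_{j > k} M_j$, which is a II$_1$ factor because any infinite tensor product of II$_1$ factors is a factor. Second, for the spectral gap estimate, I would write $M \cong A_k \otimes \bigotimes_{j > k} M_j$ and apply Lemma~\ref{lem:tensorproduct}: since $A_k$ is a full tracial factor with $\operatorname{SG}(A_k) \leq s$, we obtain $\operatorname{SG}(A_k, M) \leq \operatorname{SG}(A_k) \leq s$. This exactly verifies the FC limit spectral gap property with constant $s$, so Proposition~\ref{sgapcriterion} yields that $M$ is uniformly super McDuff.

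I do not expect any real obstacle here; the proof is essentially a routine adaptation of the argument for Theorem~\ref{thm:RUSM}, with the finite tensor product $A_k$ playing the role that $M \otimes M_k(\bC) \otimes \bC$ played in that proof. The only point requiring slight care is confirming the $\|\cdot\|_2$-density of the union of the finite tensor stages in the infinite tensor product and that Lemma~\ref{lem:tensorproduct} applies verbatim when $A_k$ is viewed as a full factor (which is guaranteed by $\operatorname{SG}(A_k) < \infty$), but both are standard.
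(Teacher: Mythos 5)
Your proof is correct and follows essentially the same route the paper intends: the paper states that Proposition~\ref{uniformfull} follows from the proof of Theorem~\ref{thm:RUSM}, and your argument is precisely that adaptation, with the finite stages $A_k = M_1 \otimes \cdots \otimes M_k$ in the role of $M \otimes M_k(\bC) \otimes \bC$, the hypothesis $\sup_k \operatorname{SG}(A_k) < \infty$ replacing the Hastings-based bound, and Lemma~\ref{lem:tensorproduct} giving $\operatorname{SG}(A_k, M) \leq \operatorname{SG}(A_k)$.
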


\begin{remark}
It is unknown whether or not there is a uniform bound on the spectral gap number of  full II$_1$ factors or, equivalently, whether or not the class of full II$_1$ factors is closed under ultraproducts.  Proposition \ref{uniformfull} shows that a positive answer to this question would imply that an infinite tensor product of full factors is uniformly super McDuff.  Said differently, if one can find an example of an infinite tensor product of full II$_1$ factors which is not uniformly super McDuff, then one can conclude that the spectral gap numbers for full II$_1$ factors is unbounded.
\end{remark} 

\section{Do finitely generic factors have the uniform super McDuff property?}

The results concerning the infinitely generic factors above motivate the following questions for the finitely generic factors:

\begin{question}
Is there a finitely generic (uniformly) super McDuff factor?  Are all finitely generic factors (uniformly) super McDuff?
\end{question}

\begin{remark}
If one restricts to the case of Connes embeddable factors, then $\R$ is the unique finitely generic factor (see \cite[Corollary 3.2.6]{AGKE-GeneralizedJung}).  Let $(\sigma_n)$ be a sequence of sentences witnessing that $\R$ is uniformly super McDuff.  By \cite[Theorem 2.14 and Proposition 3.9]{Goldbring-Enforceable}, for each $n\geq 1$, there is a unique real number $r_n$ such that $\sigma_n^M=r_n$ in all finitely generic factors $M$.  If $r_n>0$ for some $n$, then we conclude that the finitely generic factors are not uniformly super McDuff.  This result might lead to a potentially different proof that the Connes Embedding Problem has a negative solution (see \cite{MIP*eqsRE}).   
\end{remark}



\subsection{A sufficient criterion for finitely generic factors to be uniformly super McDuff}

In this subsection, we present a sufficient criterion for a finitely generic factor to be uniformly super McDuff.  We first need a preliminary lemma.

\begin{lem}\label{propTlemma}
 For any $K\geq 1$, there is a function $\alpha_K:(0,1)\to (0,1)$ with the following property:  Suppose that $N$ has property (T) with finite Kazhdan set $G$ and Kazhdan constant at most $K$.  Then for any II$_1$ factor $M$ containing $N$ and any $(p,q)\in \cal P_2(M)$ with $\max_{x\in G}\max(\|[x,p]\|_2,\|[x,q]\|_2)<\alpha_K(\epsilon)$, there is $(p',q')\in \cal P_2(N'\cap M)$ with $\|p-p'\|_2,\|q-q'\|_2<\epsilon$.
\end{lem}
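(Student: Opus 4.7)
The plan is to apply the Connes--Jones spectral-gap inequality for property (T) to the bimodule $L^2(M)$, then use functional calculus to turn the resulting approximate projections into honest projections in $N'\cap M$, and finally equalize the traces by a small enlargement.

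Since $N$ has property (T) with finite Kazhdan set $G$ (with $|G|\leq K$) and Kazhdan constant $\leq K$, the bimodule $L^2(M)$ satisfies
$$\|\xi - E_{N'\cap M}(\xi)\|_2^2 \leq K \sum_{x\in G} \|x\xi - \xi x\|_2^2$$
for every $\xi \in L^2(M)$, where we used that the central vectors of $L^2(M)$ are exactly $L^2(N'\cap M)$ with orthogonal projection $E_{N'\cap M}$. Plugging in $\xi = p$ and using $\max_{x\in G}\|[x,p]\|_2 < \alpha$ yields $\|p - a\|_2 \leq K\alpha$, where $a := E_{N'\cap M}(p)$; similarly $\|q - b\|_2 \leq K\alpha$ for $b := E_{N'\cap M}(q)$. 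Both $a$ and $b$ are positive contractions in $N'\cap M$, and $\tau(a) = \tau(p) = \tau(q) = \tau(b)$.

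Next, since $p = p^2$, a short triangle-inequality argument gives $\|a - a^2\|_2 \leq 3K\alpha$, and likewise $\|b - b^2\|_2 \leq 3K\alpha$. I then set $p_0 := \mathbf{1}_{[1/2, 1]}(a) \in N'\cap M$ and $q_0 := \mathbf{1}_{[1/2, 1]}(b) \in N'\cap M$. The pointwise bound $|t - \mathbf{1}_{[1/2, 1]}(t)| \leq 2 t (1 - t)$ on $[0, 1]$ gives $\|p_0 - a\|_2 \leq 2\|a(1-a)\|_2 \leq 6K\alpha$, hence $\|p_0 - p\|_2 \leq 7K\alpha$; similarly $\|q_0 - q\|_2 \leq 7K\alpha$.

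What remains is to arrange $\tau(p')=\tau(q')$, since the conclusion demands $(p',q')\in \mathcal{P}_2(N'\cap M)$. We have $|\tau(p_0) - \tau(q_0)| \leq 14K\alpha$ (using $\tau(p)=\tau(q)$), so assuming without loss of generality that $\tau(p_0) \leq \tau(q_0)$, I would enlarge $p_0$ to a projection $p' \geq p_0$ in $N'\cap M$ with $\tau(p') = \tau(q_0)$ and take $q' := q_0$; then $\|p' - p_0\|_2 = \sqrt{\tau(q_0) - \tau(p_0)} \leq \sqrt{14K\alpha}$, so $\|p' - p\|_2 \leq 7K\alpha + \sqrt{14K\alpha}$. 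Choosing $\alpha_K(\epsilon) := \epsilon^2/(100K)$ then makes both $\|p' - p\|_2$ and $\|q' - q\|_2$ strictly less than $\epsilon$. The main obstacle is guaranteeing existence of such an enlargement $p'$: when $N'\cap M$ happens to be a factor this is automatic, and in general the joint spectral-gap and functional-calculus constraints force $\tau(p) = \tau(q)$ to lie within $O(K\alpha)$ of the set of realizable projection-traces in $N'\cap M$, which should rule out pathological atomic-center behavior and allow the enlargement to be found.
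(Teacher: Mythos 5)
Your proof tracks the paper's argument for the first step: apply the Connes--Jones/Kazhdan inequality to $L^2(M)$ to conclude that $a = E_{N'\cap M}(p)$ and $b = E_{N'\cap M}(q)$ satisfy $\|p-a\|_2, \|q-b\|_2 \leq K\alpha$. From there, however, the two arguments diverge, and the divergence is exactly where your proof has a real gap.

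Your plan is to run the functional-calculus argument (spectral cutoff at $1/2$) to get honest projections $p_0, q_0 \in N'\cap M$ near $p, q$, and then to ``equalize'' their traces. But $N'\cap M$ need not be a factor; it can have a nontrivial atomic part, and the set of projection traces in a general tracial von Neumann algebra is not an interval. You cannot in general enlarge $p_0$ to a projection of a prescribed nearby trace in $N'\cap M$. Your closing sentence --- that the constraints ``force $\tau(p)$ to lie within $O(K\alpha)$ of the set of realizable projection-traces $\ldots$ which should rule out pathological atomic-center behavior'' --- is not an argument: being near the set of realizable traces does not give a single common trace realizable by projections that are simultaneously close to $p_0$ and to $q_0$. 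Consider $N'\cap M \cong \mathbb{C}\oplus\mathbb{C}$ with trace weights $(\tfrac12-\eta,\tfrac12+\eta)$: the minimal projections $e_1,e_2$ have nearby but unequal traces, $\mathcal{P}_2(N'\cap M)$ consists only of the four ``diagonal'' pairs, and $(e_1,e_2)$ is far from all of them. Ruling out such configurations under the lemma's hypotheses would require an actual argument, which you have not supplied.

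The paper avoids this head-on. After obtaining $(a,b) = (E_{N'\cap M}(p), E_{N'\cap M}(q))$ close to $(p,q)$ (so that the quantifier-free formula $\xi$ from Subsection~2.2 whose zeroset is $\mathcal{P}_2$ is small at $(a,b)$), it simply invokes the $T$-definability of $\mathcal{P}_2$, composing the uniform continuity modulus and the definability modulus of $\xi$ to get $(p',q') \in \mathcal{P}_2(N'\cap M)$ within $\epsilon/2$ of $(a,b)$. That is, the trace-equalization step is delegated entirely to the cited definability result for $\mathcal{P}_2$, rather than being done by hand via functional calculus and trace arithmetic. (Your functional-calculus approach is in fact what the paper uses in the proof of Proposition~4.3, but there the relevant relative commutant $A'\cap M$ is assumed to be a factor, so trace equalization is automatic; here it is not, which is precisely why the paper switches to the definability argument.) If you want to keep your more hands-on route, you would need to actually justify the perturbation to equal trace inside an arbitrary $N'\cap M$ --- and that is the nontrivial content that the definability of $\mathcal{P}_2$ is carrying.
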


 \begin{proof}
 Suppose that $\delta<\frac{1}{K}$ and $(p,q)\in \cal P_2(M)$ is such that $$\max_{x\in G}\max(\|[x,p]\|_2,\|[x,q]\|_2)<\delta.$$  Then by the definition of Kazhdan set, there is $(p_0,q_0)\in N'\cap M$ such that $\|p-p_0\|_2,\|q-q_0\|_2<K\delta$.  Recall from Subsection \ref{subsection:definability} the formula $\xi$ whose zeroset is $\cal P_2$.  Let $\beta:\bb R^{>0}\to \bb R^{>0}$ be the uniform continuity modulus for $\xi$ and $\gamma:\bb R^{>0}\to \bb R^{>0}$ be the definability modulus for $\xi$.  Then if $K\delta<\beta(\gamma(\frac{\epsilon}{2}))$, we have $\xi(p_0,q_0)<\gamma(\frac{\epsilon}{2})$, whence there is $(p',q')\in \cal P_2(N'\cap M)$ such that $\|p_0-p'\|_2,\|q_0-q'\|_2<\frac{\epsilon}{2}$.  Thus, $\alpha_K(\epsilon):=\min(\frac{1}{K}\beta(\gamma(\frac{\epsilon}{2})),\frac{\epsilon}{2K})$ is the desired function.
 \end{proof}

\begin{prop}\label{fingenpopa}
Suppose that $P$ is finitely generic and there is a II$_1$ factor $N\supseteq P$ \emph{with property (T)} for which there is an embedding $N\hookrightarrow P^\u$ with factorial relative commutant.  Then $P$ is uniformly super McDuff. 
\end{prop}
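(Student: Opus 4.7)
The plan is to verify uniform super McDuffness of $P$ directly, by combining the hypothesis with the generalized Jung property and running an ultrapower contradiction patterned on the spectral gap argument of Proposition \ref{sgapcriterion}.  The first step is an alignment of $N$ with the diagonal copy of $P$ inside $P^\u$.  Since $P$ is finitely generic, the generalized Jung property ensures that $\iota|_P : P \to P^\u$ is elementary, and of course the diagonal embedding $P \hookrightarrow P^\u$ is elementary as well.  Under CH, $P^\u$ is $\aleph_1$-saturated of cardinality $\aleph_1$, hence strongly $\aleph_1$-homogeneous, so a standard back-and-forth argument yields $\sigma \in \Aut(P^\u)$ with $\sigma \circ \iota|_P$ equal to the diagonal embedding.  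Setting $N_0 := \sigma(\iota(N))$ then gives $P \subseteq N_0 \subseteq P^\u$ with $N_0$ a property (T) factor and $N_0' \cap P^\u$ a factor.

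With this alignment in place, let $K := K(N_0)$, fix a Kazhdan set $G_0 = \{g_1,\ldots,g_s\} \subseteq (N_0)_1$ with $s \leq K$, and choose representing sequences $g_j = (g_j^{(i)})_\u$ with $g_j^{(i)} \in (P)_1$; set $G^{(i)} := \{g_1^{(i)},\ldots,g_s^{(i)}\} \subseteq (P)_1$.  Given $n \geq 1$, the guess is to take $m$ large enough that $m \geq K$ and $1/m < \alpha_K(1/(3n))$ (with $\alpha_K$ as in Lemma \ref{propTlemma}); this $m$ depends only on $n$ and is the claimed uniform bound.  For an arbitrary $F \subseteq (P)_1$ with $|F| \leq n$, the claim is that some $G^{(i_F)}$ witnesses the super McDuff condition for $F$.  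Assume otherwise:  then for every $i$ there is $(p_i, q_i) \in \cal P_2(P)$ with $\max_{x \in G^{(i)}} \max(\norm{[x,p_i]}_2, \norm{[x,q_i]}_2) < 1/m$ admitting no $u \in U(P)$ with $\norm{upu^* - q_i}_2 < 1/n$ and $\max_{x \in F} \norm{[u,x]}_2 < 1/n$.  Forming $p := (p_i)_\u$ and $q := (q_i)_\u$ then gives $(p,q) \in \cal P_2(P^\u)$ whose commutators with the $g_j \in G_0$ have norm at most $1/m < \alpha_K(1/(3n))$; by Lemma \ref{propTlemma} there is $(p',q') \in \cal P_2(N_0' \cap P^\u)$ with $\norm{p - p'}_2, \norm{q - q'}_2 < 1/(3n)$, and since $N_0' \cap P^\u$ is a factor some $u_0 \in U(N_0' \cap P^\u)$ satisfies $u_0 p' u_0^* = q'$.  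Then $\norm{u_0 p u_0^* - q}_2 < 2/(3n) < 1/n$, and since $u_0 \in N_0'$ while $F \subseteq P \subseteq N_0$, $u_0$ commutes with $F$ exactly.  Writing $u_0 = (u^{(i)})_\u$ with $u^{(i)} \in U(P)$, for $\u$-almost every $i$ the unitary $u^{(i)}$ fulfills the super McDuff conclusion for $(p_i, q_i)$ and $F$, contradicting the assumption.

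The subtlest point is the alignment step:  only the generalized Jung property, and not mere elementary equivalence, allows us to transport $N$ so that $N_0$ contains the diagonal copy of $P$, and this inclusion is crucial both for realizing the Kazhdan set $G_0$ via sequences from $P$ and for ensuring that the conjugating unitary $u_0 \in N_0'$ automatically commutes with $F \subseteq P$.  The uniformity of $m$ in $F$ is then immediate since $K$ and $\alpha_K$ depend only on $N_0$ and not on $F$.
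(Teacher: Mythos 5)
Your proof is correct and takes a genuinely different route from the paper's. The paper constructs a chain $P \subseteq N \hookrightarrow P^\u \subseteq N^\u \hookrightarrow \cdots$, takes the union $Q$, shows $Q \equiv P$ using that all the maps $P^{k\u}\hookrightarrow P^{(k+1)\u}$ are elementary (generalized Jung property), and then verifies that $Q$ is uniformly super McDuff by proving an inductive claim about conjugating pairs in $\cal P_2(P^{k\u})$ by unitaries in $(N^{(k-1)\u})'\cap P^{k\u}$. Your approach instead invokes CH together with the saturation and strong homogeneity of $P^\u$ to produce an automorphism $\sigma$ carrying the given copy $\iota(N)$ onto an $N_0$ that contains the diagonal copy of $P$ and still has factorial relative commutant in $P^\u$; you then run a single compactness/ultrapower contradiction entirely inside $P^\u$, with no chain and no induction. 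This is shorter and arguably cleaner, but it leans on CH and saturation for the alignment step, whereas the paper's proof of this particular proposition does not (CH is a standing hypothesis in the paper, but this proposition is proved without it). Both arguments hinge on Lemma \ref{propTlemma} and on the generalized Jung property, and the source of uniformity is identical in both: the Kazhdan constant of $N$ (resp.\ $N_0$) bounds $|G|$ and the commutator threshold independently of $F$.
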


\begin{proof}
We consider the sequence of embeddings
$$P\subseteq N\hookrightarrow P^\u\subseteq N^\u\hookrightarrow P^{2\u}\subseteq  N^{2\u} \cdots,$$ where for each $k\in \bb N$, we let $P^{k\u}$ denote the $k^{\text{th}}$-iterated ultrapower of $P$ and similarly for other iterated ultrapowers and the embeddings $N^{k\u}\hookrightarrow P^{(k+1)\u}$ are induced by the original embedding $N\hookrightarrow P^\u$.  We let $Q$ denote the union of the chain.  Since $P$ is finitely generic, we have that the embedding $P\hookrightarrow P^\u$ is elementary, whence so are all of the iterated embeddings $P^{k\u}\hookrightarrow P^{(k+1)\u}$, and thus $Q\equiv P$.  It thus suffices to show that $Q$ is uniformly super McDuff.

Let $G$ be a finite Kazhdan set for $N$ with Kazhdan constant $K$.  

\noindent \textbf{Claim:}  Fix $\epsilon>0$.  For all $k\geq 1$ , and all $(p,q)\in \cal P_2(P^{k\u})$, if $\|[x,p]\|_2,\|[x,q]\|_2<\alpha_K(\frac{\epsilon}{2})$ for all $x\in G$ (viewed as elements of $P^{k\u}$ in the obvious way), then there is $u\in U((N^{(k-1)\u})'\cap P^{k\u})$ such that $\|upu^*-q\|_2\leq \epsilon$.

We prove the claim by induction on $k$.  First suppose that $k=1$. If 
$$\max_{x\in G}\max(\|[x,p]\|_2,\|[x,q]\|_2)<\alpha_K \left(\frac{\epsilon}{2} \right),$$
then there is $(p',q')\in \cal P_2(N'\cap P^\u)$ with $\|p-p'\|_2,\|q-q'\|_2<\frac{\epsilon}{2}$.  Since $N'\cap P^\u$ is a factor, there is $u\in U(N'\cap P^\u)$ with $up'u^*=q'$.  It follows that $\|upu^*-q\|_2\leq \epsilon$.  Thus, the case $k=1$ is proven.


We now prove the inductive step.  Suppose the result is true for $k$ and consider $(p,q)\in \cal P_2(P^{(k+1)\u})$ with $\|[x,p]\|_2,\|[x,q]\|_2<\alpha_K(\frac{\epsilon}{2})$ for all $x\in G$.  Write $p=(p_n)_\u$ and $q=(q_n)_\u$ with $(p_n,q_n)\in \cal P_2(P^{k\u})$.  It follows that for $\u$-almost all $n$ we have $\|[x,p_n]\|_2,\|[x,q_n]\|_2<\alpha_K(\frac{\epsilon}{2})$ for all $x\in G$, whence, by induction, for these $n$, there are $u_n\in U((N^{(k-1)\u})'\cap P^{k\u})$ with $\|u_np_nu_n^*-q_n\|_2\leq \epsilon$.  Letting $u=(u_n)_\u\in U((N^{k\u})'\cap P^{(k+1)\u})$, we see that $\|upu^*-q\|_2\leq \epsilon$, as desired.  The claim is thus proven.

We now verify that $Q$ is uniformly super McDuff.  Fix $\epsilon>0$ and take $\delta$ as in the claim for $\epsilon$.  Fix $F\subseteq Q$ finite.  Take $k\geq 1$ such that $F\subseteq_{\frac{\epsilon}{2}} N^{(k-1)^\u}$.  Now suppose that $(p,q)\in \cal P_2(Q)$ are such that $\max_{x\in G}\max(\|[x,p]\|_2,\|[x,q]\|_2)<\rho$ (for $\rho>0$ to be determined).  Take $l\geq k$ such that there are $(p',q')\in \cal P_2(P^{l\u})$ with $\|p-p'\|_2,\|q-q'\|_2<\rho$. Then $\max_{x\in G}\max(\|[x,p']\|_2,\|[x,q']\|_2)<\alpha_K(\frac{\epsilon}{4})$ if $\rho$ was small enough, whence by the claim there is $u\in U((N^{(l-1)\u})'\cap P^{l\u})$ with $\|up'u^*-q'\|_2\leq \frac{\epsilon}{2}$.  It follows that $\|upu^*-q\|_2<\epsilon$ and $\|[u,x]\|_2<\epsilon$ for all $x\in F$.
\end{proof}

\begin{remark}
We note that, in the previous proposition, one can obtain the weaker conclusion that $P$ is super McDuff using \cite[Theorem 3.2.2.]{AGKE-GeneralizedJung}.
\end{remark}

\subsection{The (T)-FRC property}  To apply Proposition \ref{fingenpopa} to a finitely generic $P$, we would need a property (T) factor $N$ containing $P$ which admits an embedding into $P^{\u}$ with factorial relative commutant.  In fact, one can ask whether every property (T) factor $N$ admits such an embedding, motivating the following definition.

\begin{defn}
We say that a separable II$_1$ factor $M$ has the \textbf{(T)-FRC property} if every II$_1$ factor with property (T) admits an embedding into $M^\u$ with factorial relative commutant.
\end{defn}

It will be a consequence of Proposition \ref{popaaxiom} below that whether or not a given property (T) factor admits an embedding into $M^\u$ with factorial relative commutant is independent of the choice of ultrafilter, making the (T)-FRC well-defined.  Another consequence of Proposition \ref{popaaxiom} is that the (T)-FRC property is preserved under elementary equivalence.

We may also relativize the (T)-FRC property to the setting of Connes embeddable factors:

\begin{defn}
A Connes embeddable separable II$_1$ factor $M$ has the \textbf{embeddable (T)-FRC property} if every Connes embeddable property (T) factor admits an embedding into $M^\u$ with factorial relative commutant. 
\end{defn}

A well-known question of Popa asks whether or not $\R$ has the embeddable (T)-FRC property.


Proposition \ref{fingenpopa} together with embedding universality of Property (T) factors yields the following:

\begin{cor}\label{fingenPopaBrown}
If the finitely generic factors have the (T)-FRC property, then they are also uniformly super McDuff.
\end{cor}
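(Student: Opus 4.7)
The plan is to combine the two ingredients explicitly flagged in the sentence preceding the statement: Proposition~\ref{fingenpopa} and the embedding universality of property (T) factors recorded in Theorem~\ref{propTfact}. Let $P$ be a finitely generic factor, and assume that $P$ has the (T)-FRC property. To invoke Proposition~\ref{fingenpopa}, I need to produce (i) a property (T) factor $N$ containing $P$, and (ii) an embedding $N\hookrightarrow P^{\u}$ whose relative commutant in $P^{\u}$ is a factor.

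First I would produce the property (T) extension $N$. By Theorem~\ref{propTfact}, applied with $M=P$, there exists a II$_1$ factor $N\supseteq P$ generated by the image of a group homomorphism from the fixed countable property (T) group $\Gamma$; in particular, $N$ itself has property (T) (indeed with uniformly bounded Kazhdan number $K(\Gamma)$, though the bound is not needed here). Next I would produce the desired embedding: since $P$ has the (T)-FRC property, applied to the property (T) factor $N$, there is an embedding $j:N\hookrightarrow P^{\u}$ with $j(N)'\cap P^{\u}$ a factor.

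With these two ingredients in hand, the hypotheses of Proposition~\ref{fingenpopa} are satisfied, and the conclusion that $P$ is uniformly super McDuff follows immediately. I would also remark that it is not necessary for $j$ to extend the diagonal embedding $P\hookrightarrow P^{\u}$ along the inclusion $P\subseteq N$: the proof of Proposition~\ref{fingenpopa} only needs each link in the chain $P\hookrightarrow P^{\u}\hookrightarrow P^{2\u}\hookrightarrow\cdots$ (where the first map is $P\subseteq N\hookrightarrow P^{\u}$) to be an elementary embedding, and this is automatic because $P$ is finitely generic and therefore enjoys the generalized Jung property. Consequently, any embedding produced by the (T)-FRC property is usable.

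There is really no hard step here; the substantial work has already been carried out in Proposition~\ref{fingenpopa} and in the construction underlying Theorem~\ref{propTfact}. The only potential worry would be needing the embedding $j$ to restrict to the diagonal on $P$, which as noted above is circumvented by the generalized Jung property, so the corollary reduces to a direct matching of hypotheses.
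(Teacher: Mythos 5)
Your proof is correct and follows exactly the paper's route: the paper's one-line argument is precisely ``Proposition \ref{fingenpopa} together with embedding universality of property (T) factors,'' i.e., take a property (T) factor $N\supseteq P$ via Theorem \ref{propTfact}, use the (T)-FRC property to embed $N$ into $P^{\u}$ with factorial relative commutant, and apply Proposition \ref{fingenpopa}. Your remark that the embedding need not restrict to the diagonal on $P$ is also accurate and is already accommodated by the hypotheses and proof of Proposition \ref{fingenpopa}, which only use that every embedding $P\hookrightarrow P^{\u}$ is elementary since $P$ is finitely generic.
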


\begin{cor}
If the (T)-FRC property is enforceable, then being uniformly super McDuff is enforceable.  In particular, if the enforceable factor exists and has the (T)-FRC property, then it is uniformly super McDuff.
\end{cor}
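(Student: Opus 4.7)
The plan is to bootstrap off Corollary \ref{fingenPopaBrown} via the standard equivalence built into continuous model-theoretic forcing for tracial von Neumann algebras: a property $P$ of II$_1$ factors is enforceable if and only if every finitely generic II$_1$ factor satisfies $P$. This equivalence is a structural feature of the forcing machinery of \cite{Goldbring-Enforceable} that the paper already invokes implicitly (e.g.\ in the remark preceding this subsection, where it is used that sentences are forced to the same value in every finitely generic factor). Granted the equivalence, the first statement is essentially formal: if the (T)-FRC property is enforceable, then every finitely generic factor has (T)-FRC; Corollary \ref{fingenPopaBrown} then says every finitely generic factor is uniformly super McDuff; and applying the equivalence in the reverse direction yields that being uniformly super McDuff is enforceable.

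For the ``in particular'' clause, recall that the enforceable factor $M_{\mathrm{enf}}$ (when it exists) is characterized by the fact that ``being isomorphic to $M_{\mathrm{enf}}$'' is enforceable. Consequently $M_{\mathrm{enf}}$ is itself finitely generic, and a property $P$ is enforceable if and only if $M_{\mathrm{enf}}$ satisfies $P$: one direction is immediate, since any enforceable $P$ holds of $M_{\mathrm{enf}}$ by combining winning strategies for the two enforceable properties; the other direction follows because a winning strategy forcing the outcome to be isomorphic to $M_{\mathrm{enf}}$ automatically forces any property that $M_{\mathrm{enf}}$ satisfies. Therefore, if $M_{\mathrm{enf}}$ has the (T)-FRC property, then (T)-FRC is enforceable, and the first half of the corollary (just proved) yields that uniform super McDuffness is enforceable as well; in particular, $M_{\mathrm{enf}}$ is uniformly super McDuff.

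There is no substantive new obstacle in this argument, since all the mathematical work has already been carried out in Proposition \ref{fingenpopa} and Corollary \ref{fingenPopaBrown}. The only point to track carefully is the routine translation between enforceable properties and properties holding across all finitely generic factors, and, for the ``in particular'' clause, the equally routine characterization of the enforceable factor as the (necessarily finitely generic) factor whose complete theory consists exactly of enforceable assertions. Both are standard consequences of the Banach--Mazur-style presentation of the forcing game in \cite{Goldbring-Enforceable}.
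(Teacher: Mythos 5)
Your argument has the right architecture and is essentially the routine deduction this corollary is meant to be: all the substance sits in Proposition \ref{fingenpopa} and Corollary \ref{fingenPopaBrown}, and your handling of the ``in particular'' clause (isomorphism to the enforceable factor is enforceable, countable conjunctions of enforceable properties are enforceable and enforceable properties are realized, so the enforceable factor satisfies every enforceable property and, conversely, every property it satisfies is enforceable) is correct.

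The one step to repair is the blanket biconditional ``$P$ is enforceable if and only if every finitely generic factor satisfies $P$'' for an \emph{arbitrary} property $P$. The direction you need for uniform super McDuffness is fine: being finitely generic is itself enforceable in the forcing framework of \cite{Goldbring-Enforceable}, so any property true of all finitely generic factors is enforceable. But the converse direction is not a structural feature of the machinery for arbitrary properties; it is the analogue of ``every generic point lies in every comeager set,'' which only holds relative to syntactic (sentence-by-sentence) requirements. For the instance you use it is true, but for a reason worth citing: the (T)-FRC property is axiomatizable (Proposition \ref{popaaxiom} and its corollary), and enforceability of each axiom forces its value to be attained in every finitely generic factor, by the facts from \cite{Goldbring-Enforceable} already invoked in the remark earlier in this section; alternatively, (T)-FRC is invariant under elementary equivalence and all finitely generic factors are elementarily equivalent, so it suffices that one of them has it. Cleaner still, you can bypass that direction entirely: ``finitely generic'' and ``(T)-FRC'' are both enforceable, hence so is their conjunction, any factor with both is uniformly super McDuff by Corollary \ref{fingenPopaBrown}, and enforceability passes to weaker properties, so uniform super McDuffness is enforceable.
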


\begin{remarks}

\

\begin{enumerate}

\item The conclusion of Corollary \ref{fingenPopaBrown} holds in the Connes embeddable case as well.  As mentioned above, $\R$ is the unique separable finitely generic Connes embeddable factor.  Thus, if $\R$ has the embeddable (T)-FRC property, then this would give yet a different argument that $\R$ is uniformly super McDuff (which, to be fair, is less informative than both Brown's original proof that $\R$ has the Brown property, which tells one what the intermediate subfactor is that has factorial commutant, and our proof above using quantum expanders).
\item It is interesting to note that in the Connes embeddable case, the finitely generic factor is uniformly super McDuff although we do not know if the infinitely generic factors are uniformly super McDuff.  (It is an open question as to whether or not $\R$ is an infinitely generic embeddable factor.)  On the other hand, in the unrestricted case, as shown in Theorem \ref{thm:mainthm2}, the infinitely generic factors are uniformly super McDuff and we are unsure whether the finitely generic factors are uniformly super McDuff.
\end{enumerate}
\end{remarks}

Given Corollary \ref{fingenPopaBrown}, it is now natural to ask:

\begin{question}
    Do finitely generic factors have the (T)-FRC property?
\end{question}

In \cite{Goldbring-PopaFCEP}, it was shown that the infinitely generic factors have the (T)-FRC property.  Thus, if the finitely and infinitely generic factors are elementarily equivalent, then the finitely generic factors also have the (T)-FRC property.

\subsection{Model-theoretic aspects of the (T)-FRC property}


In this section, we explain why proving the (T)-FRC property for infinitely generic factors is easier than proving the (T)-FRC property for an arbitrary e.c. factor, which in turn is easier than proving the (T)-FRC property for finitely genric factors.  We first observe:

\begin{prop}\label{popaaxiom}
For any property (T) factor $N$, there is a family $(\sigma_{N,m})_{m\geq 1}$ of $\exists_3$-sentences such that the following are equivalent for any II$_1$ factor $M$:
\begin{enumerate}
    \item $\sigma^M_{N,m}=0$ for all $m\geq 1$.
    \item For every ultrafilter $\u$, $N$ embeds into $M^\u$ with factorial relative commutant.
    \item For some ultrafilter $\u$, $N$ embeds into $M^\u$ with factorial relative commutant.
\end{enumerate}
\end{prop}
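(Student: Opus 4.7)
The plan is to cook up an $\exists_3$-sentence $\sigma_{N,m}$ asserting that there exist contractions in $M$ which $1/m$-approximately satisfy finitely many relations from $N$ and whose implicit ``relative commutant'' is $1/m$-approximately factorial. First, I would fix a countable dense sequence of unitaries $(g_k)_{k\geq 1} \subseteq U(N)$ whose initial segment $g_1,\ldots,g_r$ is a Kazhdan set for $N$ with Kazhdan constant $K$; then for each $m \geq 1$ I would choose a quantifier-free formula $\psi_m$ in variables $y_1,\ldots,y_{r_m}$ (with $r_m$ increasing) that vanishes exactly when the $y_i$'s are contractions $1/m$-approximately satisfying an enumerated list of $\ast$-polynomial identities and trace values holding among $g_1,\ldots,g_{r_m}$ in $N$, chosen coherently so that any ultralimit of witnesses with $\psi_m^M(\vec y^{(m)}) \to 0$ extends to an embedding $N \hookrightarrow M^\u$. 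Set
\[
\theta_m(\vec y,p,q) := \min\!\left( \tfrac{1}{m} \dminus \max_{i\leq r}\max(\|[p,y_i]\|_2,\|[q,y_i]\|_2),\ \inf_{u\in U(M)} \max\!\bigl(\|upu^*-q\|_2 \dminus \tfrac{1}{m},\ \max_{i\leq r_m}\|[u,y_i]\|_2\bigr)\right)
\]
and $\sigma_{N,m} := \inf_{\vec y} \max\bigl(\psi_m(\vec y),\, \sup_{(p,q)\in\cal P_2}\theta_m(\vec y,p,q)\bigr)$. By the rules recalled in Subsection \ref{subsection:definability}, $\inf_{u \in U(M)}$ of a qf formula is $\exists_1$, so $\theta_m$ is $\exists_1$; $\sup_{(p,q)\in\cal P_2}$ of an $\exists_1$ formula is $\forall_2$; and the outer $\inf_{\vec y}$ makes $\sigma_{N,m}$ an $\exists_3$-sentence, as required.

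Implication (2) $\Rightarrow$ (3) is trivial. For (1) $\Rightarrow$ (2), given $\sigma_{N,m}^M = 0$ for all $m$, pick witnesses $\vec y^{(m)}$ with $\sigma_{N,m}(\vec y^{(m)})^M < 1/m$; by coherence of the relations encoded in $\psi_m$, the ultralimit $\tilde y_i := (y_i^{(m)})_\u$ in $M^\u$ exactly satisfies the defining relations of $N$, producing an embedding $\pi: N \hookrightarrow M^\u$ with $\pi(g_i) = \tilde y_i$. For any $(p,q)\in\cal P_2(\pi(N)'\cap M^\u)$ with representatives $(p_m), (q_m)$, the commutators with $\vec y^{(m)}$ tend to $0$, so the first branch of $\theta_m$ is eventually nonnegative, whence the second branch is small and yields approximately-central unitaries $u^{(m)}\in U(M)$ with $\|u^{(m)}p_m(u^{(m)})^*-q_m\|_2 \to 0$ whose ultralimit lies in $\pi(N)'\cap M^\u$ and conjugates $p$ to $q$. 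For (3) $\Rightarrow$ (1), given $\pi: N \hookrightarrow M^{\u_0}$ with factorial relative commutant, set $\vec y := \pi(\vec g)$: then $\psi_m^{M^{\u_0}}(\vec y) = 0$ trivially, and for any $(p,q)\in\cal P_2(M^{\u_0})$ with $\max_{i\leq r}\max(\|[p,\pi(g_i)]\|_2,\|[q,\pi(g_i)]\|_2) \leq 1/m$, Lemma \ref{propTlemma} (used with $m$ large enough that $1/m < \alpha_K(1/(2m))$) produces $(p',q')\in\cal P_2(\pi(N)'\cap M^{\u_0})$ within $1/(2m)$ of $(p,q)$; factoriality gives $u \in U(\pi(N)'\cap M^{\u_0})$ with $up'u^* = q'$, so $u$ commutes exactly with each $\pi(g_i)$ and $\|upu^*-q\|_2 < 1/m$. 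Hence $\sigma_{N,m}^{M^{\u_0}} = 0$. Descending to $M$ follows from {\L}o\'s's theorem applied to the $\forall_2$ formula $\Phi := \max(\psi_m,\sup_{(p,q)}\theta_m)$: writing $\pi(g_i) = (y_i^{(k)})_{\u_0}$, one has $\Phi^{M^{\u_0}}(\pi(\vec g)) = \lim_{\u_0} \Phi^M(\vec y^{(k)}) = 0$, so $\sigma_{N,m}^M \leq \Phi^M(\vec y^{(k)})$ is arbitrarily small.

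The main obstacle is calibrating the various $1/m$ tolerances inside $\psi_m$ and $\theta_m$ so that two things align: in the (1) $\Rightarrow$ (2) direction the ultralimit simultaneously delivers an honest embedding and a conjugating unitary in the relative commutant, while in the (3) $\Rightarrow$ (1) direction the tolerance for ``approximately central'' projections matches the input demanded by $\alpha_K$ in Lemma \ref{propTlemma}. Choosing the coherent enumeration of polynomial relations so that approximate witnesses genuinely converge to an embedding is standard but must be set up carefully; once that is done, the verification of the equivalences is a routine combination of {\L}o\'s's theorem and ultralimit manipulations.
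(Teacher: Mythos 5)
Your overall skeleton is the same as the paper's: encode approximate satisfaction of the quantifier-free type of a generating Kazhdan set, together with a $\forall_2$ condition saying that approximately central pairs in $\cal P_2$ are approximately conjugate by a unitary approximately commuting with the generators, and count quantifiers as in Subsection \ref{subsection:definability} (your $\exists_3$ count is correct). But the crucial calibration is backwards. In your $\theta_m$ the commutator tolerance in the first branch is $1/m$, and in (3) $\Rightarrow$ (1) you invoke Lemma \ref{propTlemma} ``for $m$ large enough that $1/m < \alpha_K(1/(2m))$''. No such $m$ exists: by the construction of $\alpha_K$ in the proof of that lemma, $\alpha_K(\epsilon)\leq \epsilon/(2K)$ with $K\geq 1$, so $\alpha_K(1/(2m))\leq 1/(4m) < 1/m$ for every $m$. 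Consequently, for $(p,q)\in\cal P_2(M^{\mathcal{U}_0})$ whose commutators with the $\pi(g_i)$ lie between $\alpha_K(1/(2m))$ and $1/m$, the first branch of your $\theta_m$ is strictly positive but Lemma \ref{propTlemma} gives nothing, and you have no way to produce the required unitary; so $\sigma_{N,m}^{M^{\mathcal{U}_0}}$ need not vanish and (3) $\Rightarrow$ (1) breaks. The repair is to make the first-branch tolerance the small quantity $\alpha_K(1/(2m))$ itself (this is exactly the paper's $\psi_{N,m}$); this costs nothing in the other direction, since for exactly central $(p,q)$ the first branch is then the fixed positive constant $\alpha_K(1/(2m))$.

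Your verification of (1) $\Rightarrow$ (2) also has a gap. You take level-$m$ witnesses $\vec y^{(m)}$ in $M$ and form a single ultralimit, and then for $(p,q)$ in the relative commutant with representatives $(p_m,q_m)$ you assert that the first branch of $\theta_m$ is ``eventually nonnegative'' (you mean positive). But all you know is $\lim_{\mathcal{U}}\max_i\|[p_m,y_i^{(m)}]\|_2=0$ with no rate: there is no reason the commutators beat the level-$m$ threshold ($1/m$, or $\alpha_K(1/(2m))$ after the fix above) on a set of indices belonging to $\mathcal{U}$ --- they could decay like $1/\log m$ --- in which case the minimum in $\theta_m$ is witnessed by the first branch at $\mathcal{U}$-many $m$ and you extract no conjugating unitaries at all. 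The paper avoids this coupling of the witness index with the tolerance index by working inside $M^{\mathcal{U}}$ and using countable saturation twice: first to obtain a single tuple $\vec a$ exactly realizing $\operatorname{qftp}^N(F)$ and satisfying $\psi_{N,m}^{M^{\mathcal{U}}}(\vec a)=0$ for all $m$ simultaneously (so that for exactly central $(p,q)$ the first branch equals a positive constant and the second branch must vanish, yielding $1/m$-conjugating, $1/m$-commuting unitaries for every $m$), and then a second time to upgrade these to an exact unitary in $N'\cap M^{\mathcal{U}}$ conjugating $p$ to $q$. With these two repairs your argument becomes essentially the paper's proof; the remaining pieces (the coherent encoding of the generators' relations, and the descent $\sigma_{N,m}^M=\sigma_{N,m}^{M^{\mathcal{U}_0}}$ via {\L}o\'s/elementary equivalence) are fine.
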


\begin{proof}
Fix a property (T) factor $N$ with generating set $F$ of cardinality $n$ and Kazhdan constant $K$.  For each $m\geq 1$, consider the following formula $\psi_{N,m}(\vec x)$ in the free variables $\vec x=(x_1,\ldots,x_n)$:
\begin{multline*}
\sup_{(p,q)\in \cal P_2} \min \biggl( \alpha_K \left(\frac{1}{2m} \right) \dminus \max_{i=1,\ldots,n}\max(\|[x_i,p]\|_2,\|[x_i,q]\|_2),\\
(\inf_{u\in U}\max(\|upu^*-q\|_2,\max_{i=1,\ldots,n}\|[u,x_i]\|_2)\dminus \frac{1}{m}) \biggr) = 0,
\end{multline*}
where $\alpha_K$ is as in Lemma \ref{propTlemma}.
Enumerate a countable dense subset of $\operatorname{qftp}^N(F)$ as $\{\theta_{N,i}(\vec x) \ : \ i\in \omega\}$ and let $\sigma_{N,m}$ be $\inf_x\max(\max_{i=1,\ldots,m}\theta_i(x),\psi_{N,m}(x))$.  Note that $\psi_{N,m}(\vec x)$ is a $\forall_2$-formula, whence $\sigma_{N,m}$ is a $\exists_3$-sentence.  We claim that these are the desired sentences.

First suppose that $\sigma_{N,m}^M=0$ for all $m\geq 1$.  Then by saturation, there is a tuple $\vec a=(a_1,\ldots,a_n)$ in $M^\u$ satisfying $\operatorname{qftp}^N(F)$ and for which $\psi_{N,m}^{M^\u}(\vec a)=0$ for all $m\geq 1$.  There is thus an embedding $N\hookrightarrow M^\u$ obtained by sending the elements of $F$ to the $a_i$'s.  We claim that this embedding has factorial relative commutant.  To see this, fix $(p,q)\in \cal P_2(N'\cap M^\u)$ and $m\geq 1$.  Then the first part of the minimum appearing in $\psi_{N,m}$ is $\alpha_K(\frac{1}{2m})$, whence the second part of the minimum must be $0$.  Thus, there is $u\in U(M^\u)$ such that $\|upu^*-q\|_2\leq \frac{1}{m}$ and $\|[u,a_i]\|_2\leq \frac{1}{m}$.  Since such a unitary exists for all  $m\geq 1$, by saturation, there is $u\in U(N'\cap M^\u)$ conjugating $p$ to $q$, as desired.

Now suppose that $N$ admits an embedding into $M^\u$ with factorial relative commutant for some ultrafilter $\u$.  Let $a_1,\ldots,a_n\in M^\u$ be the images of the elements of $F$ under the embedding.  Fix $m\geq 1$.  We show that $\psi_{N,m}^{M^\u}(\vec a)=0$, whence $\sigma_{N,m}^M=\sigma_{N,m}^{M^\u}=0$.  Towards this end, fix $(p,q)\in \cal P_2(M^\u)$ and suppose $\|[a_i,p]\|_2,\|[a_i,q]\|_2<\alpha_K(\frac{1}{2m})$ for all $i=1,\ldots,n$.  Arguing exactly as in the proof of Proposition \ref{fingenpopa}, there is $u\in U(N'\cap M^\u)$ satisfying $\|upu^*-q\|_2\leq \frac{1}{m}$.  This $u$ realizes the inner infimum in $\psi_{N,m}$.
\end{proof}

\begin{cor}
The (T)-FRC property and the embeddable (T)-FRC property are both $\exists_3$-axiomatizable properties of separable II$_1$ factors.
\end{cor}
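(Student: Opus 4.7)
The plan is to apply Proposition \ref{popaaxiom} uniformly over all property (T) factors and take the conjunction of the resulting sentences. For each property (T) factor $N$, that proposition provides a family $(\sigma_{N,m})_{m \geq 1}$ of $\exists_3$-sentences such that, for any II$_1$ factor $M$, the condition $\sigma_{N,m}^M = 0$ for every $m \geq 1$ is equivalent to $N$ embedding into $M^\u$ with factorial relative commutant (for some, equivalently every, nonprincipal ultrafilter $\u$).

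I would then set
\[
\Sigma = \{\sigma_{N,m} : N \text{ a property (T) factor}, \ m \geq 1\}
\]
and
\[
\Sigma_{\text{emb}} = \{\sigma_{N,m} : N \text{ a Connes embeddable property (T) factor}, \ m \geq 1\}.
\]
Since every property (T) II$_1$ factor is separable (indeed, finitely generated, as the construction of $\sigma_{N,m}$ in Proposition \ref{popaaxiom} already assumes a finite generating Kazhdan set), there are at most continuum many isomorphism classes of such factors, so $\Sigma$ and $\Sigma_{\text{emb}}$ are bona fide sets of $\exists_3$-sentences. Directly unpacking the definitions and applying Proposition \ref{popaaxiom} factor-by-factor, a separable II$_1$ factor $M$ satisfies $\sigma^M = 0$ for every $\sigma \in \Sigma$ if and only if every property (T) factor embeds into $M^\u$ with factorial relative commutant; that is, if and only if $M$ has the (T)-FRC property. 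The same argument with $\Sigma_{\text{emb}}$ in place of $\Sigma$ handles the embeddable case.

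There is no substantive obstacle: the corollary is essentially a repackaging of Proposition \ref{popaaxiom}. The only bookkeeping point worth isolating is that the family of property (T) factors over which one quantifies forms a set rather than a proper class, which is immediate once one invokes separability of property (T) factors.
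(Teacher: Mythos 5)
Your proof is correct and matches the paper's intent: the corollary is stated as an immediate consequence of Proposition \ref{popaaxiom}, obtained exactly as you describe by collecting the $\exists_3$-sentences $\sigma_{N,m}$ over (isomorphism classes of) all property (T) factors $N$, respectively all Connes embeddable ones. Your bookkeeping remark that these factors form a set up to isomorphism (being separable, indeed finitely generated) is the only point the paper leaves implicit, and you handle it correctly.
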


\begin{remarks}

\

\begin{enumerate}
    \item One could use the sentences in Proposition \ref{popaaxiom} to give a definition of the (T)-FRC property for arbitrary II$_1$ factors.
    \item It is worth pointing out that the (T)-FRC property is axiomatizable while we currently only know that the Brown property is merely \emph{local}, that is, preserved under elementary equivalence.
\end{enumerate}
\end{remarks}

Let us recall the following various facts about truth of $\exists_3$ sentences in e.c. structures, which, for simplicity, we state only for II$_1$ factors (although the first item holds in general and the second and third item hold in any $\forall_2$ theory with the joint embedding property).  The classical versions of these results appear as \cite[Corollary 3.2.5, Exercise 5.3.12, Theorem 4.3.4]{games}.

\begin{fact}
Suppose that $\sigma$ is a $\exists_3$-sentence in the language of tracial von Neumann algebras.
\begin{enumerate}
    \item If $M\subseteq N$ are e.c. factors and $\sigma^M=0$, then $\sigma^N=0$.  (Here, $\sigma$ can even have parameters from $M$.)
    \item  If $\sigma^M=0$ for some e.c. factor, then $\sigma^N=0$ for all infinitely generic factors.
    \item If $\sigma^M=0$ for some finitely generic factor, then $\sigma^N=0$ for all e.c. factors.
\end{enumerate}
\end{fact}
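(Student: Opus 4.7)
The plan is to handle (1), (2), and (3) in order, with each successive item building on the previous ones and on structural facts about the classes of generic factors.  Write $\sigma = \inf_{\vec x}\sup_{\vec y}\inf_{\vec z}\varphi(\vec x,\vec y,\vec z)$ with $\varphi$ quantifier-free.

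For (1), I would argue that any tuple $\vec a \in M$ almost witnessing $\sigma^M = 0$ also almost witnesses $\sigma^N = 0$, via a back-and-forth between ultrapowers.  Fix $\vec b \in N$ and $\epsilon > 0$.  Since $M$ is e.c., choose an embedding $i\colon N \hookrightarrow M^\u$ that fixes $M$ pointwise.  As $M \preceq M^\u$, the $\forall_2$-formula $\sup_{\vec y}\inf_{\vec z}\varphi(\vec a,\vec y,\vec z)$ still attains value (approximately) $0$ at $\vec a$ in $M^\u$, so there is $\vec c \in M^\u$ with $\varphi(\vec a, i(\vec b), \vec c)^{M^\u} < \epsilon$.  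Using now that $N$ is e.c., choose $j\colon M^\u \hookrightarrow N^{\mathcal V}$ with $j \circ i = \id_N$; preservation of quantifier-free formulas under embeddings gives $\varphi(\vec a,\vec b,j(\vec c))^{N^{\mathcal V}} < \epsilon$, and elementarity $N \preceq N^{\mathcal V}$ then produces a witness in $N$.  Letting $\epsilon \to 0$ and $\vec b$ range over $N$ yields $\sigma^N = 0$.  The argument works verbatim with parameters from $M$, since $i$ and $j$ both fix $M$ pointwise.

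For (2), I would embed the e.c.\ factor $M$ into a separable infinitely generic factor $N_0$, which is possible by embedding universality of the class of infinitely generic factors.  Applying (1) to the inclusion $M \subseteq N_0$ of e.c.\ factors gives $\sigma^{N_0} = 0$, and then the fact (recorded in Subsection \ref{subsection-e.c.}) that any two infinitely generic II$_1$ factors are elementarily equivalent upgrades this to $\sigma^N = 0$ for every infinitely generic $N$.

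For (3), I would invoke the classical forcing theorem cited in the excerpt: for a theory with the joint embedding property, an $\exists_3$-sentence holds in every finitely generic model iff it holds in every e.c.\ model.  The theory of II$_1$ factors has JEP (via free products), and for a theory with JEP all finitely generic models share a common complete theory, so $\sigma^M = 0$ for \emph{some} finitely generic $M$ upgrades to $\sigma^{M'} = 0$ for \emph{every} finitely generic $M'$, whence the classical theorem gives $\sigma^N = 0$ for every e.c.\ $N$.  I expect this item to be the main obstacle.  Unlike (1) and (2), there is no clean reduction to a back-and-forth between ultrapowers, because finitely generic factors are only known to be \emph{locally} universal rather than embedding universal, and the back-and-forth of (1) only transports $\exists_3$-truth \emph{upward} along chains of e.c.\ factors.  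A self-contained proof would need to unpack the finite forcing relation $\Vdash$ in the continuous setting and verify that, for $\exists_3$-sentences, the relation $p \Vdash \sigma$ persists to every e.c.\ factor containing a realization of the condition $p$.
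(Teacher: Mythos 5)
Your items (2) and (3) track the paper's treatment: (2) is proved exactly the same way (embed the e.c.\ factor into an infinitely generic one, apply (1), then use that all infinitely generic factors are elementarily equivalent), and for (3) the paper, like you, does not give a self-contained argument but appeals to the classical finite-forcing theorem together with the continuous-logic ingredients from Goldbring's enforceability paper, so your honest deferral there is at the same level of detail as the source. Where you genuinely diverge is (1). The paper uses a single ultrapower: after fixing $i\colon N\hookrightarrow M^{\mathcal U}$ over $M$ and using $M\preceq M^{\mathcal U}$ to see that $(\inf_z\varphi(a,i(b),z))^{M^{\mathcal U}}$ is (approximately) $0$, it applies the \emph{syntactic} form of existential closedness of $N$ to the extension $i(N)\subseteq M^{\mathcal U}$, which directly yields $(\inf_z\varphi(a,b,z))^{N}=(\inf_z\varphi(a,i(b),z))^{M^{\mathcal U}}$, with no second ultrapower needed. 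You instead insert an embedding $j\colon M^{\mathcal U}\hookrightarrow N^{\mathcal V}$ with $j\circ i=\operatorname{id}_N$ and then pull the witness down via $N\preceq N^{\mathcal V}$. Be aware that this step is not what the stated (semantic) definition of e.c.\ delivers: $M^{\mathcal U}$ is nonseparable (density character continuum), so embedding it into an ultrapower of $N$ over $N$ requires an extra argument --- e.g.\ CH plus saturation of $N^{\mathcal V}$, or a Frayne-type argument with $\mathcal V$ on a larger index set --- rather than a bare appeal to ``$N$ is e.c.'' The conclusion you want is true (and compatible with the paper's blanket CH convention), but as written this is the one step that needs justification; the cleaner fix is precisely the paper's move, namely invoking the syntactic e.c.\ property of $N$ for the (possibly nonseparable) extension $M^{\mathcal U}$, which also makes the detour through $N^{\mathcal V}$ and the final elementarity step unnecessary. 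Your handling of approximate witnesses with $\epsilon$ and of parameters from $M$ is fine.
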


\begin{proof}[Proof of (1) and (2)]
For (1), fix an embedding $i: N\hookrightarrow M^\u$ restricting to the diagonal embedding $M\hookrightarrow M^\u$.  Write $\sigma=\inf_x\sup_y\inf_z \varphi(x,y,z)$, where $\varphi$ is a quantifier-free formula with parameters from $M$.  Since $\sigma^M=0$, there is $a\in M$ such that $(\sup_y\inf_z\varphi(a,y,z))^{M}=0$.  Fix $b\in N$; we need $(\inf_z\varphi(a,b,z))^N=0$.  However, since $N$ is e.c., this follows from $(\inf_z \varphi(a,i(b),z))^{M^\u}=0$, which in turn follows from the fact that $(\sup_y\inf_z \varphi(a,y,z))^{M^\u}=0$.

(2) follows from (1) together with the facts that the infinitely generic factors are e.c., embedding universal, and are all elementarily equivalent to one another.
\end{proof}

The third item in the above proposition can be proven just as in the classical case, using the continuous versions of the necessary ingredients which were established in \cite{Goldbring-Enforceable}.

The second item in the previous fact explains why proving that the infinitely generic factors have the (T)-FRC property was the ``easiest'' possible while the third item explains why proving that $\R$ has the embeddable (T)-FRC property and the finitely generic factors have the (T)-FRC property are as ``hard'' as possible.

Item (1) of the above fact also yields the following fact:

\begin{cor}
If $M$ is an e.c. factor that contains an infinitely generic factor, then $M$ has the (T)-FRC property.
\end{cor}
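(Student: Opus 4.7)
The plan is to combine two already-established ingredients: (a) the fact, recalled just before the corollary from \cite{Goldbring-PopaFCEP}, that infinitely generic factors have the (T)-FRC property, and (b) item (1) of the preceding Fact, which says that truth of an $\exists_3$-sentence transfers upward between e.c.\ factors along an inclusion. The bridge between these ingredients is Proposition~\ref{popaaxiom}, which axiomatizes the (T)-FRC property by a family of $\exists_3$-sentences $\{\sigma_{N,m} : N \text{ property (T), } m\geq 1\}$.

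Concretely, let $P \subseteq M$ with $P$ infinitely generic and $M$ e.c. First, $P$ is itself e.c. (as recorded in the preliminaries, via \cite[Proposition 5.11]{ecfactors}), so $P \subseteq M$ is an inclusion of e.c.\ factors. Second, since $P$ has the (T)-FRC property, Proposition~\ref{popaaxiom} gives $\sigma_{N,m}^P = 0$ for every property (T) factor $N$ and every $m \geq 1$. Each $\sigma_{N,m}$ is an $\exists_3$-sentence (with no parameters). Applying item (1) of the preceding Fact to the inclusion $P \subseteq M$ of e.c.\ factors yields $\sigma_{N,m}^M = 0$ for all such $N$ and $m$. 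Invoking Proposition~\ref{popaaxiom} in the other direction, $M$ has the (T)-FRC property.

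There is essentially no obstacle here; the corollary is a direct concatenation of the three results just cited, and the only thing to be mindful of is that both $P$ and $M$ must be e.c.\ so that the upward transfer of $\exists_3$-sentences applies. The role of ``infinitely generic'' in the hypothesis is solely to supply a subfactor already known to satisfy all the $\sigma_{N,m}$; any e.c.\ subfactor with the (T)-FRC property would suffice for the same argument.
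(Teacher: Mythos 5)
Your argument is correct and is exactly the route the paper intends: the infinitely generic subfactor has the (T)-FRC property, Proposition~\ref{popaaxiom} expresses this property via the $\exists_3$-sentences $\sigma_{N,m}$, and item (1) of the Fact transfers their vanishing up the inclusion of e.c.\ factors to $M$. Your closing observation that any e.c.\ subfactor with the (T)-FRC property would do is also accurate.
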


We remark that it is still possible that all e.c. factors may be infinitely generic, whence the previous corollary would degenerate to the main result of \cite{Goldbring-PopaFCEP}.

\printbibliography

\end{document}